\newtheorem*{rep@theorem}{\rep@title}
\newcommand{\newreptheorem}[2]{%
\newenvironment{rep#1}[1]{%
 \def\rep@title{#2 \ref{##1}}%
 \begin{rep@theorem}}%
 {\end{rep@theorem}}}
\renewcommand{\restriction}{\mathbin\upharpoonright}    
\newtheorem*{theorem*}{Theorem}
\newtheorem*{maintheorem*}{Main Theorem}
\newtheorem*{question*}{Question}
\newtheorem*{corollary*}{Corollary}
\newtheorem*{definition*}{Definition}
\newtheorem{theorem}{Theorem}[section]
\newtheorem{claim}{Claim}[theorem]
\newtheorem{subclaim}{Subclaim}[claim]
\newtheorem{lemma}[theorem]{Lemma}
\newtheorem{cor}[theorem]{Corollary}
\newtheorem{question}{Question}
\newtheorem{fact}[theorem]{Fact}
\theoremstyle{definition}
\newtheorem{definition}[theorem]{Definition}
\newtheorem*{setup}{Setup~\thesection}
\theoremstyle{remark}
\newtheorem{remark}[theorem]{Remark}
\newcommand\HOD{\textnormal{HOD}}
\newcommand\diagonal{\bigtriangleup}
\newcommand\ale[1]{\marginpar{Alejandro: #1}}
\DeclareMathOperator{\supp}{supp}
\DeclareMathOperator{\crit}{crit}
\DeclareMathOperator{\Ult}{Ult}
\DeclareMathOperator{\id}{id}
    \def\sq{\sqsubseteq}
    \newcommand{\one}{\mathop{1\hskip-3pt {\rm l}}}
\newcommand{\tpitchfork}{%
  \vbox{
    \baselineskip\z@skip
    \lineskip-.52ex
    \lineskiplimit\maxdimen
    \m@th
    \ialign{##\crcr\hidewidth\smash{$-$}\hidewidth\crcr$\pitchfork$\crcr}
  }%
}
\def\s{\subseteq}
\def\forces{\Vdash}
\DeclareMathOperator{\otp}{otp}
\DeclareMathOperator{\cf}{cf}
\DeclareMathOperator{\ord}{Ord}
\renewcommand{\mid}{\mathrel{|}\allowbreak}
\newcommand{\dom}{\mathop{\mathrm{dom}}\nolimits}
\title[On the optimality of the HOD dichotomy]{On the optimality of the HOD dichotomy}
\author[Goldberg]{Gabriel Goldberg}
\author[Osinski]{Jonathan Osinski}
\author[Poveda]{Alejandro Poveda}
\subjclass[2020]{Primary 03E35, 03E55.}
\keywords{HOD conjecture, HOD dichotomy, extendibility.}
\thanks{The results presented in this paper are---with the exception of Theorem~\ref{ccenonextendible}---due to Goldberg and Poveda. Theorem~\ref{ccenonextendible} appears in Osinski's Ph.D. thesis \cite{osinski2024} and was proved  with Poveda during a visit to Harvard in 2024. Goldberg was partially supported by NSF Foundations Grant 2401789. Poveda acknowledges support from Harvard University through the Department of Mathematics and the Center of Mathematical Sciences and Applications. }
\address[Goldberg]{Department of Mathematics, UC Berkeley, CA 94720, USA}
\email{ggoldberg@berkeley.edu}
\address[Osinski]{Hamburg University, Department of Mathematics,  Bundesstraße 55,
20146 Hamburg, Germany}
\email{jonathan.osinski@uni-hamburg.de}
\address[Poveda]{ Department of Mathematics and Center of Mathematical Sciences and Applications, Harvard University, MA, 02138, USA}
\email{alejandro@cmsa.fas.harvard.edu}
\begin{document}
\maketitle

\begin{abstract}
 In the first part of the manuscript, we establish several consistency results concerning Woodin's $\HOD$ hypothesis and large cardinals around the level of extendibility. First, we prove that the first extendible cardinal can be the first strongly compact in HOD. We extend a former result of Woodin by showing that under the HOD hypothesis the first extendible cardinal is $C^{(1)}$-supercompact in HOD. We also show that the first cardinal-correct extendible may not be extendible, thus answering a question by Gitman and Osinski \cite[\S9]{GitOsi}.

   In the second part of the manuscript, we discuss the extent to which weak covering can fail below the first supercompact cardinal $\delta$ in a context where the HOD hypothesis holds. Answering a question of Cummings et al. \cite{CumFriGol}, we show that under the $\HOD$ hypothesis there are many singulars $\kappa<\delta$ where $\cf^{\HOD}(\kappa)=\cf(\kappa)$ and $\kappa^{+\HOD}=\kappa^{+}.$ In contrast, we also show that the $\HOD$ hypothesis is consistent with $\delta$ carrying a club of $\HOD$-regulars cardinals $\kappa$ such that  $\kappa^{+\HOD}<\kappa^{+}$.  Finally, we close the manuscript with a discussion about the $\HOD$ hypothesis and $\omega$-strong measurability.
\end{abstract}


\section{Introduction}
When is a mathematical object definable? Can this intuitive concept be formalized at all?
On first thought, the answer is no. 
Suppose the concept of definability could itself be precisely defined.
Consider the least undefinable ordinal number. Since there are uncountably many ordinals and only countably many definitions, this ordinal must exist.
However the number is definable, contrary to its definition!

\smallskip

In a 1946 Princeton lecture \cite{Godel}, G\"odel set out to circumvent this paradox, arriving at the concept of ordinal definability. Intuitively, a set $X$ is \emph{ordinal definable} if it is the unique object satisfying some set-theoretic property that can be defined in terms of finitely many ordinal numbers. A set $X$ is \emph{hereditarily ordinal definable} (in symbols, $X\in \HOD$) if $X$ is ordinal definable, every element of $X$ is ordinal definable, every element of every element of $X$ is ordinal definable, and so on; see \S\ref{sec: prelimminaries} for a  formal definition. After Gödel, subsequent investigation by 
Myhill--Scott \cite{Myhill}, Vop\v{e}nka \cite{Vopenka}, and others began to outline the theory of HOD. 
In last few years, the study of HOD has experienced renewed interest after groundbreaking findings due to  Woodin \cite{WooPartI,WoodinPartII,WoodinDavisRodriguez, midrasha}. 

\smallskip

$\HOD$ is a transitive class containing all the ordinals and satisfying ZFC, the Zermelo-Fraenkel axioms of set theory together with the Axiom of Choice. In modern set-theoretic terminology, $\HOD$ is an \emph{inner model}. \emph{Inner model theory} is the major area of research in set theory launched by Gödel's discovery of the \emph{constructible universe}, $L$ \cite{GodelL}. Ever since set theorist like Jensen, Mitchell, Neeman, Steel and Woodin (see \cite{MitchHandbook}) have produced a hierarchy of  fine-structural inner models extending Gödel's  $L$. 
This hierarchy is organized according to the extent to which an inner model captures the large cardinal hierar\-chy of the  mathematical universe $V$. Although 
$\HOD$ is not regarded as a canonical inner model —for example, it is malleable under the \emph{method of forcing}—it contains all canonical inner models constructed to date and is sufficiently rich to accommodate all known large cardinal, thereby embodying the greatest mathematical richness.
\emph{Large cardinals axioms} postulates the existence of infinite cardinals whose existence cannot be proved by ZFC. These axioms form a hierarchy of exhaustive principles which permit for a classification of virtually all mathematical theories --extending ZFC-- according to their \emph{consistency strength}. Large cardinals have also helped to tackle deep problems concerning \emph{down-to-earth} mathematical objects; such as whether every subset of the real line can be Lebesgue measurable \cite{Sol},  or whether every almost free  abelian group must be necessarily free \cite{MagShegroups}. These axioms play a pivotal role in the foundations of mathematics by testing the limits of mathematical theories and providing deep insights into the independence phenomenon  \cite{Kan, Koel,Maddy}.


\smallskip

Woodin's work on HOD \cite{WooPartI, midrasha} concerns the question of whether $\HOD$ is close to resembling the true mathematical universe. In the presence of strong enough large cardinals, his \textit{HOD hypothesis} implies that the answer is yes: the two models closely resemble one another in terms of their large cardinal structure. This paper examines the limits on this resemblance, the extent to which HOD and \(V\) can differ even under the assumption of the HOD hypothesis.
Before turning to this, let us define the HOD hypothesis.

\begin{definition*}[\cite{midrasha}]
    The \textit{$\HOD$ hypothesis} states that there is a proper class of regular cardinals that are not \(\omega\)-strongly measurable in \(\HOD\).\footnote{For relevant definitions, see \S\ref{sec: prelimminaries}.}
\end{definition*}
The above hypothesis was formulated in the wake of the discovery of groundbreaking $\HOD$ dichotomy: 
\begin{theorem*}[HOD Dichotomy, \cite{midrasha}]\label{theo:HOD Dichotomy}
    If \(\delta\) is an extendible cardinal then exactly one of the following holds:
    \begin{enumerate}
      \item \(\HOD\) is a weak extender model for the supercompactness of \(\delta\).
        \item Every regular cardinal \(\kappa \geq \delta\) is \(\omega\)-strongly measurable in \(\HOD\).
    \end{enumerate}
\end{theorem*}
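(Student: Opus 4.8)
The plan is to split the biconditional content of the statement into two halves---that the alternatives are mutually exclusive, and that at least one of them always holds---and to treat $\delta$ throughout as the given extendible cardinal. Recall that $\kappa$ is $\omega$-strongly measurable in $\HOD$ exactly when some $\eta$ with $(2^\eta)^{\HOD}<\kappa$ admits \emph{no} partition of $S^\kappa_\omega=\{\xi<\kappa:\cf(\xi)=\omega\}$ into $\eta$ stationary pieces inside $\HOD$; so the negation asserts that $\HOD$ can split $S^\kappa_\omega$ into $\eta$ stationary sets for every such $\eta$. This reformulation is what will connect the two alternatives.

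First I would establish \emph{mutual exclusivity}. Assume (1), that $\HOD$ is a weak extender model for the supercompactness of $\delta$. The idea is to exploit the strong covering (universality) properties of weak extender models above $\delta$: such a model computes successors and cofinalities correctly and captures enough of the club structure of each regular $\kappa\geq\delta$ that a Solovay-style splitting of $S^\kappa_\omega$ into $\kappa$-many stationary sets can be carried out \emph{inside} $\HOD$. Since $\kappa>(2^\eta)^{\HOD}$ for every relevant $\eta$, this witnesses that $\kappa$ is not $\omega$-strongly measurable in $\HOD$. Hence under (1) no regular $\kappa\geq\delta$ is $\omega$-strongly measurable, so (2) fails and the two cases cannot hold simultaneously.

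Next I would show that \emph{at least one} alternative holds, arguing the contrapositive of (2): assume some regular $\kappa\geq\delta$ fails to be $\omega$-strongly measurable in $\HOD$, and derive (1). By extendibility, for each ordinal $\lambda$ fix an elementary $j\colon V_{\lambda+1}\to V_{j(\lambda)+1}$ with $\crit(j)=\delta$ and $j(\delta)>\lambda$, and for $X\in\HOD$ with $X\subseteq\lambda$ form the derived ultrafilter $\mathcal U=\{A\in\HOD : A\subseteq (P_\delta(X))^{\HOD},\ j[X]\in j(A)\}$. Routine seed arguments show $\mathcal U$ is a normal, fine, $\delta$-complete $\HOD$-ultrafilter on $(P_\delta(X))^{\HOD}$, so the only thing left for the weak extender model property is the amenability clause $\mathcal U\cap\HOD\in\HOD$. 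The mechanism I would use to secure amenability rests on the failure of $\omega$-strong measurability: the ultrafilter $\mathcal U$ is determined by the seed $j[X]$, hence by the restriction of $j$ to ordinals, and $j$ is continuous at (so determined by its values on) ordinals of cofinality $\omega$, while the $\omega$-club filter of $\HOD$ on such ordinals is ordinal definable. Because $\kappa$ is not $\omega$-strongly measurable, $\HOD$ has, for every small $\eta$, an ordinal-definable partition of $S^\kappa_\omega$ into $\eta$-many stationary sets; these finely separated, ordinal-definable stationary sets act as coordinates that let $\HOD$ reconstruct the action of $j$ on ordinals below $\kappa$, and thereby recover $\mathcal U\cap\HOD$, from purely ordinal-definable data.

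The hard part will be exactly this amenability step. Producing a normal fine supercompactness ultrafilter from an extendible embedding is standard, but verifying $\mathcal U\cap\HOD\in\HOD$ demands the delicate analysis of the $\omega$-club filter and of ordinal-definable stationary partitions sketched above, and it is precisely the non-$\omega$-strong-measurability of $\kappa$ that supplies the coordinates needed to define the measure inside $\HOD$. A second delicate point is the passage from a single non-$\omega$-strongly-measurable $\kappa$ to amenable measures on $(P_\delta(X))^{\HOD}$ for \emph{all} $X$: here I would use that the extendibility embedding reflects the non-$\omega$-strong-measurability of $\kappa$ to cofinally many regular cardinals, so that the coordinatization is available uniformly at every level $\lambda$. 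Combining this with the mutual exclusivity of the first step yields that exactly one of (1) and (2) holds.
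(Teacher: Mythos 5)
You should know at the outset that the paper does not prove this statement at all: the HOD Dichotomy is quoted in the introduction as background, attributed to Woodin and cited from \cite{midrasha}. So the only meaningful comparison is against Woodin's known proof, whose architecture your sketch does follow (mutual exclusivity via cofinality agreement and Solovay splitting inside weak extender models; the main implication via derived measures from extendibility embeddings plus stationary-partition coding).

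Measured against that proof, your sketch has a genuine gap, and it is located exactly where you declare things ``routine.'' For the derived object $\mathcal U=\{A\in \HOD : A\subseteq (\mathcal{P}_\delta(X))^{\HOD},\ j[X]\in j(A)\}$ to be a fine ultrafilter on $\mathcal{P}_\delta(X)\cap\HOD$ at all --- indeed, for it to be nonempty --- you must first know that $j[X]\in j(\mathcal{P}_\delta(X)\cap\HOD)$, i.e.\ that the seed $j[X]$ is a member of $j(\HOD\cap V_\lambda)$ and has size less than $j(\delta)$ there. If $j[X]\notin j(\mathcal{P}_\delta(X)\cap\HOD)$, then no $A\in\HOD$ satisfies $j[X]\in j(A)$, so $\mathcal U=\emptyset$; and fineness is precisely the requirement that the sets $\{\sigma : x\in\sigma\}$ lie in $\mathcal U$, which again forces the seed into $j(\HOD)$. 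This seed-membership claim is the heart of Woodin's argument, and it is exactly what the stationary-partition coding is for: applying $j$ to a partition $\langle S_\alpha:\alpha<\delta\rangle\in\HOD$ of $S^\gamma_\omega$ into stationary sets, one shows that a component of $j(\langle S_\alpha\rangle)$ reflects at $\sup j[\gamma]$ if and only if its index lies in $j[\delta]=\delta$, because $j[S^\gamma_\omega]$ is an $\omega$-club in $\sup j[\gamma]$ meeting exactly those components; this reflection criterion is what allows $j(\HOD)$-definable objects to recognize suprema of $j$-images and, after a bootstrapping argument, to capture $j[X]$ itself. By attributing the coding only to the amenability clause $\mathcal U\cap\HOD\in\HOD$, your proposal would stall at the very first step if executed as written.

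A second, related gap: your uniformity step asserts that ``the extendibility embedding reflects the non-$\omega$-strong-measurability of $\kappa$ to cofinally many regular cardinals,'' and you use this as an input to get partitions at every level $\lambda$. But elementarity of $j\colon V_{\lambda+1}\to V_{j(\lambda)+1}$ only gives you statements about $j(\HOD\cap V_\lambda)$, i.e.\ about the HOD of a rank-initial segment, not about the real $\HOD$ at higher levels; bridging that gap requires the $\Sigma_2$-correctness and approximation/definability machinery, and in Woodin's development the transfer of non-$\omega$-strong measurability from one regular $\kappa\geq\delta$ to all of them is obtained as a \emph{consequence} of first proving that $\HOD$ is a weak extender model, not as a lemma available beforehand. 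As stated, your proposal assumes (a form of) the conclusion in order to run the construction at arbitrary $\lambda$. The mutual-exclusivity half of your sketch is essentially fine, modulo the standard care needed to produce pieces that are stationary in $V$ rather than merely in $\HOD$.
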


The $\HOD$ dichotomy has placed the foundations of mathematics at a critical crossroad. In simple terms, the dichotomy states that, assuming sufficiently strong large cardinals exist (i.e., extendibles),  either there are reasonable prospects for completing Gödel's inner model program, or there is no fine-structural insight into the mathematical  universe $V$  \cite{BKW}. A natural question thus emerges: which of these scenarios  prevails? The $\HOD$ hypothesis predicts that the prevailing scenario is the first one.

\smallskip

A corollary of the HOD dichotomy is that, under the HOD hypothesis,  every extendible cardinal is  supercompact in \(\HOD\).
This raises an interesting question: could one perhaps prove that under the \(\HOD\) hypothesis every extendible cardinal is extendible in \(\HOD\)? In fact, it is a consequence of  \emph{Woodin's Universality Theorem} (see Theorem \ref{UniversalityI} below)
that every extendible cardinal except perhaps the first one is extendible in \(\HOD\). Woodin's theorem is in stark contrast to classical fine-structural inner models, where the extent of similarity between these models and the mathematical universe is contingent upon so-called \emph{anti-large cardinal hypotheses}.

\smallskip

Our first theorem shows that 
Woodin's theorem is optimal:
\begin{reptheorem}{thm:HOD_thm}
     It is consistent with the \(\HOD\) hypothesis that the least extendible cardinal is the least 
    strongly compact cardinal in \(\HOD\). 
\end{reptheorem}
Thus the  first extendible is genuinely differ\-ent from the rest.\footnote{In Goldberg's paper \cite{Gol}, it is claimed that Woodin proved that under the HOD hypothesis, every extendible cardinal must be extendible in \(\HOD\). This is the negation of the theorem  stated above. In fact, Woodin never claimed to prove such a thing, and this was Goldberg's mistake alone.}

The second theorem is that under the $\HOD$ hypothesis, even though the first extendible may be the first supercompact cardinal of \(\HOD\), there is still a sense in which it must be a rather large supercompact cardinal:
\begin{reptheorem}{HODabsorbsC(1)supercompacts}
   Under the \(\HOD\) hypothesis, if a cardinal $\delta$ is extendible then it must be $C^{(1)}$-supercompact in \(\HOD\).
\end{reptheorem}
The class of $C^{(n)}$-supercompacts was introduced by Bagaria \cite{Bag} and it has been extensively studied by Poveda in \cite{HMP,PovOmega, PovAxiomA}. Theorem~\ref{HODabsorbsC(1)supercompacts} above improves a former result by Woodin saying that under the HOD conjecture, the first extendible cardinal is supercompact in HOD. Incidentally, Theorems~\ref{thm:HOD_thm} and \ref{HODabsorbsC(1)supercompacts} 
provide an alternative proof of  the main identity crisis theorem of Hayut, Magidor and Poveda \cite{HMP}.


\smallskip

Following up on this issue, in  \S\ref{SectionTransferring}  we show that under the HOD hypothesis the classes of $C^{(n)}$-supercompact/$C^{(n)}$-extendible cardinals  above the first extendible transfer down to $\HOD$. More precisely, we show the following:
\begin{reptheorem}{TrasnferringCnextendibles}
    Suppose  $\delta$ is the first extendible and that the $\HOD$ hypothesis holds. If  $\kappa>\delta$ is $C^{(n)}$-supercompact (resp. $C^{(n)}$-extendible) then $\kappa$ is $C^{(n)}$-supercompact (resp. $C^{(n)}$-extendible) in $\HOD.$
\end{reptheorem}

We conclude this first part of the manuscript discussing a related problem. Inspired by previous work of Magidor and V\"a\"an\"aanen \cite{MagVan} and Galeotti, Khomskii and V\"a\"an\"aanen \cite{gal2020}, Gitman and Osinski \cite{GitOsi} introduced cardinal-correct extendible cardinals: A cardinal  $\delta$ is \emph{cardinal-correct extendible}  if for each ordinal $\alpha>\delta$ there is an elementary embedding $j\colon V_\alpha\rightarrow M$ such that $\crit(j)=\delta$, $j(\delta)>\alpha$ and $M$ is \emph{cardinal-correct} (i.e., $\text{Card}^M=\text{Card}\cap M$). It will be evident from the definition of extendibility (see page~\pageref{extendible}) that every extendible is cardinal-correct extendible. It is implicitly asked in work by Gitman and Osinski \cite[\S9]{GitOsi} whether the opposite implication is true as well. 
We show that the answer is negative.
\begin{reptheorem}{ccenonextendible}
   It is consistent with the $\HOD$ hypothesis that the first cardinal-correct extendible is the first strongly compact cardinal.
\end{reptheorem}

This is a consequence of Theorem~\ref{thm:HOD_thm} and the following general result:

\begin{reptheorem}{thm:CCEandHOD}
    Suppose that $\HOD$ is cardinal-correct. Then every extendible cardinal is  
   cardinal-correct extendible in $\HOD$.  
\end{reptheorem}

\smallskip

In the second part of the paper we analyze the optimality of the HOD dichotomy for strong compact cardinals proved by the first author in \cite{Gol}. Namely, if $\delta$ is strong compact and the $\HOD$ hypothesis holds,  
every singular cardinal $\lambda>\delta$ is singular in $\HOD$ and $(\lambda^+)^{\HOD}=\lambda^+$ (i.e., \emph{weak covering holds} at $\lambda$).  A natural question is whether the assumption $``\delta$ is strong compact + $\HOD$ hypothesis''  is consistent with  many singular cardinals $\lambda<\delta$ violating weak covering.  
Building upon previous work of Cummings, Friedman and Golshani \cite{CumFriGol} we improve their main result showing that:
\begin{reptheorem}{thm: extendingCumFriGol}
  The following configuration is consistent with the $\HOD$ hypothesis: The first supercompact $\delta$ carries a club  $C\s \delta$ of consisting of $\HOD$-regular cardinals $\kappa$ such that $\kappa^{+\HOD}<\kappa^+$. 
\end{reptheorem}

Answering a question in \cite{CumFriGol}, we  show that this is arguably optimal:
\begin{reptheorem}{thm: many instances of covering}
    If $\delta$ is supercompact and the $\HOD$ hypothesis holds then there are unboundedly many singular cardinals $\kappa<\delta$ that are singular in $\HOD$ and $\kappa^{+\HOD}=\kappa^+$.
\end{reptheorem}

We conclude the paper with a brief discussion on the $\HOD$ dichotomy and $\omega$-strongly measurability in $\HOD$. 
In \cite{Gol}, Goldberg showed that if the $\HOD$ hypothesis fails and there is a strongly compact cardinal $\delta$, then all sufficiently large regular cardinals are $\omega$-strongly measurable in $\HOD$. Let $\eta_0$ be the least ordinal greater than or equal to $\delta$ such that all regular cardinals $\theta\geq \eta_0$ are $\omega$-strongly measurable in $\HOD$. Is it possible to bound $\eta_0$? We show that the answer is no. This shows that the refinement of Woodin's $\HOD$ dichotomy proved in \cite{Gol} cannot be significantly improved:
\begin{reptheorem}{thm: omegastrongly}
    Suppose \(\delta\) is supercompact
    and the \(\HOD\) hypothesis fails. Then for any ordinal \(\alpha\), there is a forcing extension \(V[G]\) in which \(\delta\) is supercompact, the \(\HOD\) hypothesis fails, and \(\eta_0^{V[G]} \geq \alpha\).
\end{reptheorem}
 The manuscript is fairly self-contained and the notations are standard in set theory. In \S\ref{sec: prelimminaries},  we provide  all the pertinent preliminaries. In \S\S\ref{SectionFirstExtendible}, \ref{SectionTransferring} and \ref{sec: CCE} we prove Theorems~\ref{thm:HOD_thm} through \ref{thm:CCEandHOD} above. 
 The first part of Section~\ref{sec: on the HOD dichotomy for supercompact} is devoted to proving Theorems~\ref{thm: extendingCumFriGol} and \ref{thm: many instances of covering}, while in the second part we will prove Theorem~\ref{thm: omegastrongly}. 
We also collect a few  open problems through the paper. 

\section{Preliminaries and notations}\label{sec: prelimminaries}
Recall that a set $X$ is \emph{ordinal definable} (in symbols,  $X\in \mathrm{OD}$) if there is a formula $\varphi(x,x_0,\dots, x_n)$ of the language of set theory  which, together with  ordinals $\alpha_0,\dots,\alpha_n$, defines $X$; namely, $X=\{x: \varphi(x,\alpha_0,\dots,\alpha_n)\}$. Additionally, $X$ is  \emph{Hereditarily Ordinal Definable} (in symbols, $X\in \HOD$) if $X\in \mathrm{OD}$ and the transitive closure of $\{X\}$\footnote{Recall that the transitive closure of $\{X\}$ is the smallest transitive set containing $\{X\}$.} consists of $\mathrm{OD}$-sets only.  A cardinal \(\kappa\) is called \textit{\(\omega\)-strongly measurable in \(\HOD\)}
if for some \(\eta\) with \((2^\eta)^\HOD < \kappa\), there is no partition in $\HOD$
of \(S^{\kappa}_\omega := \{\xi < \kappa: \cf(\xi) = \omega\}\) into \(\eta\)-many
stationary sets.  An inner model \(M\) of ZFC is a \textit{weak extender model}
for the supercompactness of a cardinal \(\kappa\)
if for all \(X\in M\), there is a normal fine \(\kappa\)-complete ultrafilter
\(\mathcal U\) on \(\mathcal{P}_\kappa(X)\cap M\) with \(\mathcal U\cap M\in M\). 
In particular, in this situation \(\kappa\) is a supercompact cardinal in both \(M\) and \(V\). Woodin has showed that if $\delta$ is extendible and the $\HOD$ hypothesis holds then $\HOD$ is a weak extender model for the supercompactness of $\delta$ \cite{WooPartI}.


\subsection{Some large cardinal notions}\label{PreliminariesLargeCardinals}

Here we collect a few relevant large-cardinal notions. For further information  we refer to Kanamori's  \cite{Kan}.

\begin{definition}[\cite{Bag}]
    For each $n<\omega$, denote by $C^{(n)}$ the class of all ordinals $\theta$ that are \emph{$\Sigma_n$-correct} (in symbols, $V_\theta\prec_{\Sigma_n} V$); namely, the ordinals for which given $\varphi(\bar{x})$ a $\Sigma_n$ formula in the language of set theory and $\bar{a}\in V_\theta$, 
    $$V_\theta\models \varphi(\bar{a})\;\Longleftrightarrow\; V\models \varphi(\bar{a}).$$
\end{definition}
 $C^{(0)}$ and  $C^{(1)}$ are, respectively, the collections of all ordinals and $\beth$-fixed points.  For each $n<\omega$, $C^{(n)}$ is a $\Pi_n$ definable club class. For additional information on these classes we refer the reader to \cite[\S2]{Bag}.

\smallskip

Two large-cardinal notions in which we will be interested are:

 \begin{definition}[Bagaria {\cite{Bag}}]\label{def: Cnsupercompactness}
    A cardinal $\delta$ is called \emph{$\lambda$-$C^{(n)}$-supercompact}  for  $\lambda>\delta$ if there is an elementary embedding $j\colon V\rightarrow M$ such that $\crit(j)=\delta$,  $j(\delta)>\lambda$, $M$ is a transitive class with $M^\lambda\s M$ and $j(\delta)\in C^{(n)}$. 
    
    $\delta$ is  \emph{$C^{(n)}$-supercompact} if it is $\lambda$-$C^{(n)}$-supercompact for all $\lambda>\delta$.
    \end{definition}

    \begin{definition}[Bagaria {\cite{Bag}}]
    A cardinal $\delta$ is called \emph{$\lambda$-$C^{(n)}$-extendible}  for  $\lambda>\delta$ if there is $\theta\in\mathrm{Ord}$ and an elementary embedding $j\colon V_\lambda\rightarrow V_\theta$ such that $\crit(j)=\delta$, $j(\delta)>\lambda$ and $j(\delta)\in C^{(n)}$. 
    
    $\delta$ is  \emph{$C^{(n)}$-extendible} if it is $\lambda$-$C^{(n)}$-extendible for all $\lambda>\delta$.
    \end{definition}

     A cardinal $\delta$ is supercompact  if it witnesses the property described in Definition~\ref{def: Cnsupercompactness} except for $j(\delta)\in C^{(1)}$ (cf. \cite{Kan}). Likewise, $\delta$ is extendible \label{extendible}if and only if it is $C^{(1)}$-extendible. $C^{(n)}$-extendibility yields a strong hierarchy in the sense that the first $C^{(n+1)}$-extendible is a limit of $C^{(n)}$-extendibles. In contrast, this does not apply to the class of $C^{(n)}$-supercompact cardinals (see \cite{HMP, PovAxiomA} or Corollary~\ref{cor: identity crises} below).

     \smallskip

     The next theorems characterize both  $C^{(n)}$-supercompactness and $C^{(n)}$-extend\-ibility in terms of ``tallness" \cite{HamkinsTall}:
\begin{theorem}[Poveda, {\cite[\S3]{PovAxiomA}}]\label{CharacterizingCnsupercompacts}
   The following are equivalent:
   \begin{enumerate}
       \item   $\delta$ is $C^{(n)}$-supercompact;
       \item   $\delta$ is supercompact and for each regular  $\lambda > \delta$ there is $i\colon V\rightarrow N$ such that $\crit(i)=\delta$, $N^{{<}\delta}\s N$, $\cf(i(\delta))>\lambda$, and $i(\delta)\in C^{(n)}$.
   \end{enumerate}
\end{theorem}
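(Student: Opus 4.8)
The statement is an equivalence, so I will treat the two implications separately. The forward direction $(1)\Rightarrow(2)$ is routine bookkeeping, while $(2)\Rightarrow(1)$ carries all the weight: there I must manufacture a single embedding that simultaneously inherits the $\gamma$-closure coming from supercompactness and the $C^{(n)}$-target coming from tallness, and the delicate point will be transferring the $C^{(n)}$-membership of the image back down to $V$.

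\emph{The direction $(1)\Rightarrow(2)$.} Assume $\delta$ is $C^{(n)}$-supercompact. Supercompactness is immediate by simply dropping the clause $j(\delta)\in C^{(n)}$ in Definition~\ref{def: Cnsupercompactness}. For the tallness clause, fix a regular $\lambda>\delta$ and let $j\colon V\to M$ witness $\lambda$-$C^{(n)}$-supercompactness, so $M^\lambda\s M$, $\crit(j)=\delta$, $j(\delta)>\lambda$ and $j(\delta)\in C^{(n)}$. Taking $i:=j$ and $N:=M$, the conditions $N^{<\delta}\s N$ and $i(\delta)\in C^{(n)}$ are immediate, and it remains only to check $\cf(i(\delta))>\lambda$. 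Since $\delta$ is regular in $V$, $j(\delta)$ is regular in $M$; and since $M^\lambda\s M$, the models $V$ and $M$ agree on cofinalities $\le\lambda$, so no cofinal map of length $\le\lambda$ into $j(\delta)$ exists in $V$. Hence $\cf^V(i(\delta))>\lambda$.

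\emph{The direction $(2)\Rightarrow(1)$: the combination.} Assume $\delta$ is supercompact and satisfies clause (2). Fix a regular $\gamma>\delta$; I must produce $j\colon V\to M$ with $M^\gamma\s M$, $j(\delta)>\gamma$ and $j(\delta)\in C^{(n)}$. First, using supercompactness, fix a $\gamma$-supercompactness embedding $j_U\colon V\to M$ with $M^\gamma\s M$, $\crit(j_U)=\delta$ and $\delta^\ast:=j_U(\delta)>\gamma$. The property asserted in (2) is first order about $\delta$ (tall embeddings may be coded by extenders and $C^{(n)}$ is $\Pi_n$-definable), so by elementarity $M$ satisfies the same statement about $\delta^\ast$. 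Working inside $M$, I extract a tall embedding $i^\ast\colon M\to N^\ast$ with $\crit(i^\ast)=\delta^\ast$, $(N^\ast)^{<\delta^\ast}\s N^\ast$ in $M$, $M\models i^\ast(\delta^\ast)\in C^{(n)}$ and $\cf^M(i^\ast(\delta^\ast))>\gamma$, and I set $h:=i^\ast\circ j_U\colon V\to N^\ast$. The routine verifications then run as follows. As $\crit(j_U)=\delta$ and $\crit(i^\ast)=\delta^\ast>\gamma>\delta$, the composite has $\crit(h)=\delta$ and $h(\delta)=i^\ast(\delta^\ast)>\delta^\ast>\gamma$. For closure, $\crit(i^\ast)=\delta^\ast>\gamma$ with $(N^\ast)^{<\delta^\ast}\s N^\ast$ gives $M\models(N^\ast)^\gamma\s N^\ast$; since $N^\ast\s M$ and $M^\gamma\s M$ in $V$, any $\gamma$-sequence from $N^\ast$ lands first in $M$ and then, as computed inside $M$, in $N^\ast$. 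Thus $(N^\ast)^\gamma\s N^\ast$ in $V$.

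\emph{The main obstacle.} The only remaining point, and the crux of the proof, is to transfer the conclusion $\theta:=h(\delta)\in C^{(n)}$ from the ultrapower to $V$: I have $M\models\theta\in C^{(n)}$ and need $V\models\theta\in C^{(n)}$, i.e.\ $V_\theta\prec_{\Sigma_n}V$. This is genuinely subtle, because $\Sigma_n$-correctness of a supercompact ultrapower is not free: a supercompact cardinal need not lie in $C^{(n)}$ for large $n$, so in general $M\not\prec_{\Sigma_n}V$, and the rank $\theta$ lies far above the closure point $\gamma$, so one cannot simply invoke $V_\theta^M=V_\theta^V$. The plan to break the circle is to first show that clause (2) already forces $\delta\in C^{(n)}$ (reflecting the genuine fact $V\models i(\delta)\in C^{(n)}$ through the tall embedding down to $\delta$), and then to use $\delta\in C^{(n)}$ together with supercompactness to arrange the embedding $j_U$ to be $\Sigma_n$-correct, $M\prec_{\Sigma_n}V$. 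Once $M\prec_{\Sigma_n}V$ is secured, the $\Pi_n$-definability of $C^{(n)}$ closes the argument: membership in $C^{(n)}$ is expressed by a single $\Pi_n$ formula in each model, so $M\models\theta\in C^{(n)}$ transfers to $V\models\theta\in C^{(n)}$. Establishing these two correctness facts, $\delta\in C^{(n)}$ and the existence of a $\Sigma_n$-correct $\gamma$-supercompactness embedding, is where essentially all the difficulty of the theorem is concentrated; the cofinality hypothesis $\cf(i(\delta))>\lambda$ is what I expect to power the reflection step guaranteeing $\delta\in C^{(n)}$.
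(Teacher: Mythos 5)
Your direction $(1)\Rightarrow(2)$ is correct. Bear in mind that the paper itself does not prove this theorem (it quotes it from \cite{PovAxiomA}), so the comparison below is with the argument the statement actually requires.

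For $(2)\Rightarrow(1)$ you set up the right kind of composition and your closure verification for $N^*$ is fine, and you have correctly located the crux: transferring ``$h(\delta)\in C^{(n)}$'' from the ultrapower to $V$. But both halves of your proposed resolution fail. Step (a) --- ``clause (2) implies $\delta\in C^{(n)}$'' --- is not a theorem of $\mathrm{ZFC}$: since clause (2) is (by this very theorem) equivalent to $C^{(n)}$-supercompactness, and by the identity-crisis results cited in this paper (Corollary~\ref{cor: identity crises}, \cite{HMP}, \cite{PovAxiomA}) the first supercompact cardinal can consistently be $C^{(n)}$-supercompact, your claim would make the first supercompact $\Sigma_n$-correct; but the first supercompact is \emph{never} in $C^{(3)}$, because ``there is a supercompact cardinal'' is $\Sigma_3$ while ``$\kappa$ is supercompact'' is $\Pi_2$, so $\Sigma_3$-correctness at $\delta$ would reflect a supercompact below $\delta$. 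Step (b) --- ``arrange $M\prec_{\Sigma_n}V$ for a $\gamma$-supercompactness embedding'' --- is impossible: for any normal fine measure $U$ on $\mathcal{P}_\delta(\gamma)$ one has $j_U(\delta)<(2^{\gamma^{<\delta}})^+$, so, e.g.\ under $\mathrm{GCH}$, $\gamma^+<j_U(\delta)<\gamma^{++}$ and $j_U(\delta)$ is not even a $V$-cardinal although $M$ believes it is inaccessible; being a cardinal is $\Pi_1$, so already $M\prec_{\Sigma_2}V$ fails. (Were $M\prec_{\Sigma_n}V$ arrangeable, the theorem would be a triviality.) So the proof cannot be completed along your lines.

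The mechanism that actually closes the gap keeps the \emph{real} clause-(2) embedding, whose target ordinal is certified $\Sigma_n$-correct in $V$, rather than internalizing clause (2) into $M$ by elementarity; and it explains the cofinality hypothesis, whose role you guessed incorrectly. Fix $\gamma$, let $U$ be normal fine on $\mathcal{P}_\delta(\gamma)$, and apply clause (2) in $V$ with a regular $\lambda>2^{\gamma^{<\delta}}$ to get $i\colon V\to N$ (coded by an extender, so that $N$ is a definable class) with $i(\delta)\in C^{(n)}$ true in $V$. Since $i(\delta)\in C^{(1)}$ it is a strong limit cardinal, so $j_U(\alpha)<i(\delta)$ for all $\alpha<i(\delta)$; since $\cf(i(\delta))>\lambda>\gamma^{<\delta}=|\mathcal{P}_\delta(\gamma)|$, $j_U$ is continuous at $i(\delta)$. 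Hence $j_U(i(\delta))=i(\delta)$. Now set $h:=(j_U\restriction N)\circ i\colon V\to j_U(N)$. Then $\crit(h)=\delta$, and $h(\delta)=j_U(i(\delta))=i(\delta)$ is $\Sigma_n$-correct \emph{in $V$} --- no transfer between models is needed --- while $j_U(N)$ is closed under $\gamma$-sequences in $V$ because it is closed under ${<}j_U(\delta)$-sequences inside $M_U$ and $M_U$ is $\gamma$-closed in $V$. Thus the hypothesis $\cf(i(\delta))>\lambda$ is there to make $i(\delta)$ a \emph{fixed point} of the supercompactness ultrapower, not to power a reflection argument yielding $\delta\in C^{(n)}$.
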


\begin{theorem}[Bagaria and Goldberg, {\cite{BagGol}}]\label{CharacterizingExtendibility}
    A cardinal $\delta$ is extendible if and only if for arbitrary large   $\lambda>\delta$, there is a normal fine  $\delta$-complete ultrafilter on $T_{\delta,\lambda}:=\{x\in \mathcal{P}_\delta(\lambda): \text{$\otp(x)$ is $\Sigma_2$-correct}\}$.
\end{theorem}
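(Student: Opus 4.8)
The plan is to recast the statement as a purely embedding-theoretic equivalence and then to reflect. First I observe that a normal, fine, $\delta$-complete ultrafilter $U$ on $T_{\delta,\lambda}$ is exactly the same data as an elementary embedding $j\colon V\to M$ with $\crit(j)=\delta$, $M^\lambda\s M$, $j(\delta)>\lambda$ and $V_\lambda\prec_{\Sigma_2}M$. Indeed, forming $j=j_U\colon V\to M=\Ult(V,U)$, fineness gives $j[\lambda]\s[\id]_U$ while normality forces every element of $[\id]_U$ to have the form $j(\beta)$, so $[\id]_U=j[\lambda]$ and $\otp([\id]_U)=\lambda$; $\delta$-completeness and normality give the usual $\crit(j)=\delta$, $j(\delta)>\lambda$ and $M^\lambda\s M$, whence $V_\lambda^M=V_\lambda$. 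Since $T_{\delta,\lambda}\in U$ we get $[\id]_U\in j(T_{\delta,\lambda})=T_{j(\delta),j(\lambda)}^M$, i.e. $M\models\otp(j[\lambda])\in C^{(2)}$, which unwinds to $M\models\lambda\in C^{(2)}$, that is $V_\lambda\prec_{\Sigma_2}M$. Conversely, from any such $j$ the seed $j[\lambda]$ defines $U=\{A\s\mathcal{P}_\delta(\lambda):j[\lambda]\in j(A)\}$, which is routinely normal, fine and $\delta$-complete, and concentrates on $T_{\delta,\lambda}$ precisely because $V_\lambda\prec_{\Sigma_2}M$. So it suffices to prove: $\delta$ is extendible iff for arbitrarily large $\lambda$ there is $j\colon V\to M$ with $\crit(j)=\delta$, $M^\lambda\s M$, $j(\delta)>\lambda$ and $V_\lambda\prec_{\Sigma_2}M$.

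For the direction from such ultrafilters to extendibility, fix $j\colon V\to M$ as above for some $\lambda$. Since $M\models\lambda\in C^{(2)}\s C^{(1)}$ and $M^\lambda\s M$, the ordinal $\lambda$ is a genuine $\beth$-fixed point, and $|V_{\bar\lambda}|=\bar\lambda$ for cofinally many $\bar\lambda<\lambda$ in $C^{(1)}$. For each such $\bar\lambda$ the restriction $j\restriction V_{\bar\lambda}$ is coded by two $\bar\lambda$-sequences of elements of $M$, hence lies in $M$, and witnesses inside $M$ that $\delta$ is $\bar\lambda$-extendible (with target $(V_{j(\bar\lambda)})^M$ and $j(\delta)>\lambda>\bar\lambda$). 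The assertion ``$\delta$ is $\bar\lambda$-extendible'' is $\Sigma_2$ in the parameters $\bar\lambda,\delta\in V_\lambda$, so $V_\lambda\prec_{\Sigma_2}M$ reflects it down to $V_\lambda$; there the witnessing target height $\theta$ and embedding $e$ lie in $V_\lambda$, and the remaining assertion ``$e\colon V_{\bar\lambda}\to V_\theta$ is elementary, $\crit(e)=\delta$, $e(\delta)>\bar\lambda$'' is $\Pi_1$, hence transfers up to $V$ because $\lambda\in C^{(1)}$. Thus $\delta$ is $\bar\lambda$-extendible in $V$ for cofinally many $\bar\lambda<\lambda$; letting $\lambda$ range over the arbitrarily large values supplied by the hypothesis, $\delta$ is $\mu$-extendible for all $\mu$, i.e.\ extendible.

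For the converse, fix $\lambda\in C^{(2)}$. The goal is to produce an extendibility embedding $j\colon V_\mu\to V_\theta$ with $\mu>\lambda$, $\crit(j)=\delta$, $j(\delta)>\mu$, and crucially $\theta\in C^{(2)}$; the seed $j[\lambda]$ then defines the desired ultrafilter on $T_{\delta,\lambda}$, the concentration being exactly $V_\lambda\prec_{\Sigma_2}V_\theta$, which follows from $\lambda,\theta\in C^{(2)}$ together with $\lambda<\theta$. Producing a $\Sigma_2$-correct target is the crux: extendibility alone yields embeddings $V_\mu\to V_\theta$, but a priori $\theta$ need not lie in $C^{(2)}$, and $\Sigma_2$-correctness does not propagate through the embedding from $\mu\in C^{(2)}$ (a restriction argument gives $\theta\in C^{(1)}$ and upward $\Sigma_2$-preservation for free, but the downward $\Sigma_2$-reflection into $V_\theta$ is precisely the missing ingredient). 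This target-correctness is supplied by Bagaria's analysis of $C^{(n)}$-extendibility in \cite{Bag}: an extendible cardinal admits, for every $\mu\in C^{(2)}$, an extendibility embedding with both $\mu$ and target height in $C^{(2)}$. Granting this, the seed construction completes the forward direction.

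The main obstacle is therefore the forward direction's target-correctness lemma—that $\Sigma_2$-correctness can be imposed on the targets of extendibility embeddings. This is the genuinely extendibility-specific reflection step, and it is where the strength of the hypothesis is used, in contrast with mere $C^{(2)}$-supercompactness, where only $j(\delta)\in C^{(2)}$ is controlled and no $\Sigma_2$-correctness is injected into the relevant $V_\theta$ (or into $M$). The remaining verifications—that the seed ultrafilter is normal, fine and $\delta$-complete, the computation $\otp(j[\lambda])=\lambda$, and the $\Sigma_2/\Pi_1$ bookkeeping in the reflection—are routine.
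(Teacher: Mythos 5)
The paper does not prove this theorem at all---it is quoted from Bagaria--Goldberg \cite{BagGol}---so your attempt can only be judged on its own merits. Your reduction to embeddings and your direction ``ultrafilters $\Rightarrow$ extendible'' are essentially sound. The genuine gap is in the other direction, at exactly the step you call the crux: the lemma you attribute to Bagaria---that every extendible $\delta$ admits, for every $\mu\in C^{(2)}$, an extendibility embedding $j\colon V_\mu\rightarrow V_\theta$ with \emph{target height} $\theta\in C^{(2)}$---is not in \cite{Bag}, and it is in fact false. To see why, note that an extendible $\delta$ lies in $C^{(3)}\s C^{(2)}$; if $\mu\in C^{(2)}$ then $(C^{(2)})^{V_\mu}=C^{(2)}\cap\mu$, so $V_\mu\models ``\delta\in C^{(2)}$'', whence by elementarity $V_\theta\models ``j(\delta)\in C^{(2)}$'', and if moreover $\theta\in C^{(2)}$ this pulls back to $V$: $j(\delta)\in C^{(2)}$ and $j(\delta)>\mu$. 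Thus your lemma would make every extendible cardinal $C^{(2)}$-extendible. But, as this very paper recalls in \S\ref{PreliminariesLargeCardinals}, the first $C^{(2)}$-extendible is a limit of extendibles, so the least extendible is never $C^{(2)}$-extendible (equivalently: the least extendible is not $\Sigma_4$-correct, while every $C^{(2)}$-extendible is). So the forward direction of your proof rests on a refutable statement.

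The irony is that you already had all the ingredients for a correct proof and misdiagnosed what is needed: one does not need $\theta\in C^{(2)}$, only $\theta\in C^{(1)}$, which you concede comes for free. The missing observation is a Levy-absoluteness argument showing that if $\lambda\in C^{(2)}$, $\theta\in C^{(1)}$ and $\lambda<\theta$, then $V_\lambda\prec_{\Sigma_2}V_\theta$. Write a $\Sigma_2$ formula as $\exists x\,\pi(x)$ with $\pi\in\Pi_1$. If $V_\theta\models\exists x\,\pi(x)$, fix a witness $x\in V_\theta$; since $\theta\in C^{(1)}$ means $V_\theta=H(\theta)\prec_{\Sigma_1}V$, the $\Pi_1$ fact $\pi(x)$ transfers up to $V$, so the $\Sigma_2$ statement holds in $V$ and hence in $V_\lambda$ because $\lambda\in C^{(2)}$. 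Conversely, a witness in $V_\lambda$ has its $\Pi_1$ property transfer up to $V$ (as $\lambda\in C^{(1)}$) and then down to the transitive set $V_\theta$ by downward $\Pi_1$-absoluteness. Granting this, the forward direction is easy: fix $\lambda\in C^{(2)}$ above $\delta$, pick any $\mu\in C^{(2)}$ with $\mu>\lambda$ (so that $V_\mu$ computes $T_{\delta,\lambda}$ correctly) and any extendibility embedding $j\colon V_\mu\rightarrow V_\theta$ with $\crit(j)=\delta$, $j(\delta)>\mu$; then $\theta\in C^{(1)}$ automatically, so $V_\theta\models ``\otp(j[\lambda])=\lambda\in C^{(2)}$'', i.e.\ $j[\lambda]\in j(T_{\delta,\lambda})$, and the ultrafilter derived from the seed $j[\lambda]$ is normal, fine, $\delta$-complete and concentrates on $T_{\delta,\lambda}$. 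With this substitution your argument becomes a complete proof; as written, it is not.
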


    It is not hard to show that if $\delta$ is extendible and $\lambda>\delta$ is an \emph{indestructibly $\Sigma_2$-correct cardinal} (i.e., $\one\forces_{\mathbb{P}}``\lambda\in C^{(2)}"$ for all $\lambda$-directed-closed poset $\mathbb{P}$) then there is a normal ultrafilter on the following set $$T^*_{\delta,\lambda}:=\{x\in \mathcal{P}_\delta(\lambda): \text{$\otp(x)$ is indestructibly $\Sigma_2$-correct}\}.$$

An interesting question in regards to the HOD dichotomy is whether  a large cardinal hypothesis weaker than extendibility can be employed to prove that \(\HOD\) is a weak extender
model for the supercompactness of some cardinal. A solution to this problem was provided by  Woodin:

\begin{definition}[Woodin {\cite[\S7.1]{WooPartI}}]
    A cardinal \(\delta\) is \emph{\(\HOD\)-supercompact}
if for all cardinals \(\lambda\), there is an elementary embedding \(j : V\to M\)
with \(\crit(j) = \delta\), 
\(M^\lambda \subseteq M\), 
\(j(\delta) > \lambda\), and 
\(\HOD^M\cap V_\lambda = \HOD\cap V_\lambda\).
\end{definition}

\begin{theorem}[{\cite[Theorem~193]{WooPartI}}]\label{thm:WoodinHODsupercompact}
    Under the \(\HOD\) hypothesis, if a cardinal \(\delta\) is \(\HOD\)-supercompact, then \(\HOD\) is a weak extender model
    for the supercompactness of \(\delta\) (in particular, $\delta$ is supercompact in $\HOD$).
\end{theorem}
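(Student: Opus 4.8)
The plan is to verify the defining clause of a weak extender model directly, manufacturing the required ultrafilters as derived measures from the embeddings supplied by $\HOD$-supercompactness, and to isolate a single amenability statement into which the $\HOD$ hypothesis is fed. It suffices to produce, for every $X\in\HOD$, a normal fine $\delta$-complete ultrafilter $\mathcal U$ on $\mathcal P_\delta(X)\cap\HOD$ with $\mathcal U\cap\HOD\in\HOD$; the final clause of the theorem is then immediate, since $\mathcal U\cap\HOD$ is, internally to $\HOD$, a normal fine $\delta$-complete ultrafilter on $(\mathcal P_\delta(X))^{\HOD}$, and as $X$ ranges over $\HOD$ this witnesses that $\delta$ is supercompact in $\HOD$. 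Fixing $X$ of rank below $\xi_0$, I would apply $\HOD$-supercompactness at a level $\lambda\gg\xi_0$ to obtain $j\colon V\to N$ with $\crit(j)=\delta$, $N^\lambda\subseteq N$, $j(\delta)>\lambda$, and the decisive agreement $\HOD^N\cap V_\lambda=\HOD\cap V_\lambda$. The ultrafilter is then the derived measure with seed $s:=j[X]$, namely $A\in\mathcal U$ iff $s\in j(A)$; normality, fineness and $\delta$-completeness are routine from $\crit(j)=\delta$ and $N^\lambda\subseteq N$.

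The whole difficulty is concentrated in two points: that $\mathcal U$ is nontrivial (i.e.\ $s\in\HOD^N$, so that $\mathcal P_\delta(X)\cap\HOD\in\mathcal U$) and that the trace $\mathcal U\cap\HOD=\{A\in\HOD:s\in j(A)\}$ lies in $\HOD$. Both follow from a single statement, the \emph{Main Lemma}: for every $\xi<\lambda$ one has $j\restriction(\HOD\cap V_\xi)\in\HOD^N$. Granting this, set $g:=j\restriction(\HOD\cap V_\xi)\in\HOD^N$ for $\xi$ just above the rank of $X$; then $s=j[X]$ is computed from $g$, so $s\in\HOD^N$, and the trace equals $\{A\in\HOD^N\cap V_\xi:A\subseteq\mathcal P_\delta(X)\cap\HOD^N,\ s\in g(A)\}$, which is definable over $N$ from the $\HOD^N$-parameters $g,s,X,\delta$ and hence belongs to $\HOD^N$. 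As its rank is below $\lambda$, the agreement $\HOD^N\cap V_\lambda=\HOD\cap V_\lambda$ places it in $\HOD$, as required.

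To prove the Main Lemma I would exploit that $\HOD$ carries a definable global well-ordering, so every element of $\HOD$ is coded by a set of ordinals; consequently $j\restriction\HOD$ is determined by its action on the ordinals, and it suffices to show $j[\gamma]\in\HOD^N$ for a class of regular cardinals $\gamma<\lambda$ cofinal in $\lambda$. This is where the $\HOD$ hypothesis enters: it furnishes such $\gamma$ that are moreover \emph{not} $\omega$-strongly measurable in $\HOD$, so that $\HOD$ contains a partition $\vec S=\langle S_\alpha:\alpha<\eta\rangle$ of $S^\gamma_\omega$ into stationary sets with $\eta$ large. The idea is to read $j[\gamma]$ off the reflection behaviour of this partition at the ordinal $\hat\gamma:=\sup j[\gamma]$, whose cofinality is $\gamma$: applying $j$, the sequence $j(\vec S)\in\HOD^N$ partitions $S^{j(\gamma)}_\omega$, the pattern recording which cells $j(S_\alpha)\cap\hat\gamma$ are stationary in $\hat\gamma$ is computable inside $\HOD^N$ from $\hat\gamma$ and $j(\vec S)$ alone, and—because the $S_\alpha$ separate the points of cofinality $\omega$ below $\gamma$—this pattern pins down $j[\gamma]$. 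Thus $j[\gamma]\in\HOD^N$, and assembling these over cofinally many $\gamma$ via the $\HOD$-well-order yields the Main Lemma.

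I expect the genuine obstacle to be this last recovery step: verifying that stationarity of the cells $j(S_\alpha)\cap\hat\gamma$ is correctly computed \emph{within} $\HOD^N$—reconciling $\HOD$-stationarity, $V$-stationarity and $N$-stationarity at $\hat\gamma$, whose cofinality these models may compute differently—and that enough cells survive as stationary to separate all points of $j[\gamma]$. This is exactly the combinatorial content that collapses when $\gamma$ \emph{is} $\omega$-strongly measurable in $\HOD$, which is why the $\HOD$ hypothesis is the precise assumption making the argument go through.
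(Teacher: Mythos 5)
The paper contains no proof of this statement---it is imported verbatim as Theorem~193 of \cite{WooPartI}---so your proposal has to be measured against Woodin's argument rather than against anything in the text. Your skeleton does match that argument: deriving the measure from an $\HOD$-supercompactness embedding with seed $s=j[X]$, reducing both nontriviality and $\HOD$-amenability of the trace to the single statement $j\restriction(\HOD\cap V_\xi)\in\HOD^N$, and using the agreement $\HOD^N\cap V_\lambda=\HOD\cap V_\lambda$ to move the low-rank trace from $\HOD^N$ into $\HOD$ are all sound reductions. The gaps are both inside your proof of the Main Lemma, and they are exactly where the content of the theorem lives. First, by the definition recalled in \S\ref{sec: prelimminaries}, failure of $\omega$-strong measurability at $\gamma$ only provides, for each $\eta$ with $(2^\eta)^{\HOD}<\gamma$, a partition in $\HOD$ of $S^\gamma_\omega$ into $\eta$ many $V$-stationary cells; in particular $\eta<\gamma$ always, so the cells emphatically do not ``separate the points of cofinality $\omega$ below $\gamma$'' (each cell has $\gamma$ many points). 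The stationarity pattern of $j(\vec S)$ at $\hat\gamma=\sup j[\gamma]$ therefore determines at most $j[\eta]$, equivalently $j\restriction\eta$---never $j[\gamma]$. This particular defect is repairable (for a fixed $X$ one only needs $j\restriction\mu$ where $\mu$ is the $\HOD$-cardinality of a code for $\HOD\cap V_\xi$, and the $\HOD$ hypothesis supplies a regular non-$\omega$-strongly measurable $\gamma>(2^\mu)^{\HOD}$, after which one chooses $\lambda>\gamma$ and then the embedding), but your Main Lemma as stated, demanding such $\gamma$ cofinally below a prescribed $\lambda$, must then be abandoned: the $\HOD$ hypothesis gives a proper class of such cardinals, not cofinally many below every $\lambda$.

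The second gap is decisive. The step you defer in your final paragraph---that for $i\notin j[\eta]$ the cell $j(\vec S)_i\cap\hat\gamma$ is nonstationary \emph{as computed in} $\HOD^N$, so that the pattern is $\HOD^N$-definable---is not a verification that can be flagged and postponed; it is the theorem. The easy inclusion is fine: any club of $\HOD^N$ is a true club, so $V$-stationarity of the cells indexed by $j[\eta]$ passes down to $\HOD^N$. But for the converse, the only natural witness to nonstationarity of a bad cell is the club of limit points of $j[\gamma]$, which lies in $N$ (by closure) and has no reason whatsoever to lie in $\HOD^N$; worse, a single $\HOD^N$-club avoiding all bad cells would immediately yield $j[\eta]\in\HOD^N$ (as the set of $i$ whose cell meets it), so producing such clubs is not a step toward the Main Lemma but is the Main Lemma, and the argument as written is circular. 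Woodin's proof does genuine additional work at exactly this point, work calibrated by the counting clause $(2^\eta)^{\HOD}<\gamma$ in the definition of $\omega$-strong measurability (roughly, a failure of $\HOD^N$ to compute the pattern correctly must be converted into a stationary set on which the club filter measures every $\HOD$-set, i.e., into $\omega$-strong measurability, contradicting the choice of $\gamma$); none of that appears in your proposal. Observing that the $\HOD$ hypothesis is ``the precise assumption making the argument go through'' names the phenomenon---it does not prove it.
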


Every extendible cardinal is \(\HOD\)-supercompact \cite[\S7]{WooPartI}, but the consistency strength of \(\HOD\)-supercompactness is
the same as supercompactness\footnote{One can force \(V = \HOD\) while preserving all supercompacts,
and if \(V = \HOD\), then every supercompact cardinal is trivially \(\HOD\)-supercompact.}. However how do these two notions relate implication-wise?  For instance, a priori it is not far-fetched that every extendible cardinal is a limit of $\HOD$-supercompacts. We will answer this question in the negative in the forthcoming Corollary~\ref{cor: identity crises}.

\subsection{The Universality Theorems}\label{sec: Universality Theorems}
For future convenience, in this section we  formulate the Universality Theorems for inner models $N$ containing the ordinals and having  the so-called \emph{covering} and \emph{approximation} properties. 

\begin{definition}[Hamkins, {\cite{HamCover}}]
    Suppose that $\delta>\omega$ is a regular cardinal and that $N$ is a transitive inner model of $\mathrm{ZFC}$ containing the ordinals. 
    \begin{enumerate}
        \item $N$ has the \emph{$\delta$-cover property} if for all $\sigma\s N$ with $|\sigma|<\delta$, there exists $\tau\in N$ such that $\sigma\s \tau$ and  $|\tau|<\delta$.
        \item $N$ has the \emph{$\delta$-approximation property} if for all $X\s N$, 
             $X\in N$ if and only if 
            $X\cap \tau\in N$ for all $\tau\in N$ with $|\tau|<\delta$.
    \end{enumerate}
\end{definition}

The next theorem due to Hamkins shows that if $N$ has the $\delta$-cover and $\delta$-approximation properties then $N$ is easily definable. Thus, the theory of such inner models is part of the first order theory of  $V$.

\begin{theorem}[Definability Theorem, Hamkins \cite{Geology}]\label{Definability}
  Suppose  $\delta$ is a regular uncountable cardinal and that $N$ is an inner model of $\mathrm{ZFC}$ containing all the ordinals and having the $\delta$-covering and $\delta$-approximation properties. Then $N$ is $\Sigma_2$ definable with $N\cap H(\delta^+)$ as a parameter. 
\end{theorem}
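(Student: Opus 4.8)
The plan is to prove that $N$ is $\Sigma_2$-definable from the parameter $N \cap H(\delta^+)$. The key insight is that the $\delta$-approximation and $\delta$-cover properties allow us to recover all of $N$ from its local behavior below $\delta$-sized sets, together with the parameter encoding $N$'s structure on $H(\delta^+)$. First I would establish the central claim: for any transitive inner model $N$ with these two properties, membership in $N$ can be characterized by a formula of bounded quantifier complexity. The natural route is to show that $N$ is uniquely determined among all transitive inner models of $\mathrm{ZFC}$ containing the ordinals that share the same restriction $N \cap H(\delta^+)$ and satisfy the $\delta$-cover and $\delta$-approximation properties relative to the \emph{same} $\delta$.

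The heart of the argument is a uniqueness lemma, which I would formulate as follows: if $N_1$ and $N_2$ are two inner models with the $\delta$-cover and $\delta$-approximation properties and $N_1 \cap H(\delta^+) = N_2 \cap H(\delta^+)$, then $N_1 = N_2$. To prove this, I would argue by induction on rank that $N_1 \cap V_\alpha = N_2 \cap V_\alpha$ for all $\alpha$. At successor and limit stages one reduces membership of a set $X \in N_i$ to membership of all its $\delta$-approximations $X \cap \tau$ (for $\tau \in N_i$ of size ${<}\delta$) by the $\delta$-approximation property; the $\delta$-cover property guarantees enough such small covering sets exist. Since by the parameter the two models agree on small sets (those in $H(\delta^+)$ or, more carefully, those hereditarily of small size), the approximations agree, and hence the sets themselves agree. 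The subtle point is that a set $X \in N_1 \cap V_\alpha$ need not itself lie in $H(\delta^+)$, so one cannot directly read it off the parameter; instead one must encode $X$ via the transitive collapse of a sufficiently rich small elementary substructure and apply the inductive hypothesis together with approximation.

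Once uniqueness is in hand, the definability follows by the standard maneuver of quantifying over \emph{candidate} models. Specifically, I would show that ``$x \in N$'' is equivalent to: ``there exists a transitive set (or, by reflection, a $V_\theta$) $W$ containing $x$ and the parameter $p := N \cap H(\delta^+)$ such that $W$ satisfies enough of $\mathrm{ZFC}$, $W$ believes it has the $\delta$-cover and $\delta$-approximation properties with respect to $p$, $(N \cap H(\delta^+))^W = p$, and $x \in N^W$,'' where $N^W$ is the object defined inside $W$ matching $p$. The existential quantifier over the witnessing set $W$ contributes a $\Sigma_2$ prefix (one existential set quantifier followed by bounded/first-order assertions verifiable inside $W$), and the uniqueness lemma guarantees that any such $W$ computes membership in $N$ correctly. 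One must take care that the internal notions of cover and approximation, evaluated inside $W$, agree with the external ones for $\delta$-sized sets---this is where the regularity and uncountability of $\delta$, plus absoluteness of the approximation condition between $W$ and $V$, are used.

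The main obstacle I expect is the uniqueness lemma, and within it the reduction of an arbitrary set $X \in N$ to data already captured by $N \cap H(\delta^+)$. The approximation property only tells us $X$ is determined by the family $\{X \cap \tau : \tau \in N,\ |\tau|^N < \delta\}$, but the individual approximations $X \cap \tau$ are themselves not obviously in $H(\delta^+)$ unless $\tau \in H(\delta^+)$, which requires $\tau$ to be not merely small but hereditarily small. Bridging from ``size ${<}\delta$'' to ``hereditarily of size ${<}\delta$'' is the delicate technical step, handled by passing to transitive collapses of elementary substructures $M \prec H(\theta)$ of size ${<}\delta$ with $\tau \subseteq M$, and verifying that the collapse interacts correctly with both properties. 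A secondary subtlety is confirming that the $\Sigma_2$ formula genuinely has that complexity: the assertion ``$W$ has the $\delta$-approximation property'' is itself a quantifier over subsets, so one must either restrict to verifying it only for the approximations actually needed to pin down $x$, or absorb it into the existential witness; I would handle this by making $W$ large enough (e.g. $W = V_\theta$ with $\theta \in C^{(2)}$ large) that the relevant instances are internally verifiable, keeping the overall complexity at $\Sigma_2$.
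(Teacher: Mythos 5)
The paper does not actually prove this statement: it is quoted as a known result of Hamkins (cited to the set-theoretic geology paper), so there is no internal proof to compare against, and your proposal must be measured against Hamkins' published argument. Its architecture---a uniqueness lemma for inner models with the $\delta$-cover and $\delta$-approximation properties that agree on $H(\delta^+)$, followed by a $\Sigma_2$ definition obtained by quantifying over rank-initial-segment candidates---is exactly the one you outline, and your complexity accounting is essentially right: the $\Sigma_2$ shape comes from ``$\exists \theta\, (V_\theta \models \cdots)$'', the hidden $\Pi_1$ cost being the assertion that the witness really is some $V_\theta$, and choosing $\theta$ with enough closure (e.g.\ a $\beth$-fixed point of cofinality greater than $\delta$) makes the cover/approximation conditions and the agreement with the parameter internally verifiable.

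The genuine gap is in the uniqueness lemma, which is the entire mathematical content of the theorem. Your sketch---``by induction on rank \dots\ since by the parameter the two models agree on small sets, the approximations agree, and hence the sets themselves agree''---is circular as stated: to put $X \in N_1$ into $N_2$ via $N_2$'s approximation property you must show $X \cap \tau \in N_2$ for every $\tau \in N_2$ of size less than $\delta$, and such a $\tau$ (hence also $X \cap \tau$) is a small set of possibly very large ordinals, so it is not an element of $H(\delta^+)$ and the parameter says nothing directly about it. You do flag this, but the repair you propose (collapsing an elementary submodel $M \prec H(\theta)$ with $\tau \subseteq M$) does not work as described: the collapse $\pi$ is built in $V$, and there is no reason that $\pi \restriction \tau$ or $\pi[X \cap M]$ belongs to either inner model, which is what you would need in order to push the collapsed set through the parameter and pull it back. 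The known proof instead runs an induction on cardinals and uses Hamkins' interleaved-cover argument, alternately applying the cover properties of $N_1$ and $N_2$ to build an increasing $\omega$-chain threading both models; and even there one must be careful, since the union of such a chain need not lie in either model: after adding a single Cohen real $c$ over $W$, the poset has size $\omega$, so $W$ has the $\omega_1$-approximation and cover properties in $W[c]$, the finite sets $c \cap n$ all lie in $W$, and yet their union $c$ does not. So the step you defer as ``delicate'' is not a routine verification; it is the crux, it requires a specific combinatorial idea that the proposal does not contain, and without it the induction does not close.
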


Hamkins and Woodin have showed that inner models $N$ with the approximation and cover properties inherit most of the large cardinal structure of $V$. A precise formulation of this fact  is provided by the \emph{Universality Theorems}:
\begin{theorem}[Universality Theorem I, Woodin]\label{UniversalityI}
  Suppose  $\delta$ is a regular uncountable cardinal and that $N$ is an inner model of $\mathrm{ZFC}$ containing all the ordinals and having the $\delta$-covering and $\delta$-approximation properties. Suppose that $E$ is an $N$-extender of length $\eta$ with critical point $\kappa_E\geq \delta$. Let
  $$j_{E}\colon N\rightarrow N_E$$
  be the ultrapower embedding. Then the following are equivalent:
  \begin{enumerate}
      \item For each $A\in N\cap \mathcal{P}(\eta^{<\omega})$, $j_E(A)\cap \eta^{<\omega}\in N$.
      \item $E\in N$.
  \end{enumerate}
\end{theorem}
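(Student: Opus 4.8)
The plan is to prove the two implications separately. Throughout write $\kappa = \kappa_E$ and recall that the given ultrapower embedding $j_E$ derives $E$, in the sense that for $a \in [\eta]^{<\omega}$ and $X \in N\cap\mathcal{P}([\kappa]^{|a|})$ one has $X \in E_a$ if and only if $a \in j_E(X)$. The implication $(2)\Rightarrow(1)$ is the routine, purely internal direction: if $E \in N$, then since $N \models \mathrm{ZFC}$ the whole extender ultrapower $\Ult(N,E)$ together with the embedding $j_E$ is definable inside $N$ from the parameter $E$. Hence for any $A \in N\cap\mathcal{P}(\eta^{<\omega})$ both $j_E(A)$ and $\eta^{<\omega}$ lie in $N$, so $j_E(A)\cap\eta^{<\omega}\in N$.

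The substantial direction is $(1)\Rightarrow(2)$, and this is where the cover and approximation properties enter. I would code $E$ by the set $\tilde{E} = \{(a,X) : a \in [\eta]^{<\omega},\ X \in E_a\}$; showing $\tilde{E}\in N$ yields $E\in N$. Since $\delta \le \kappa$ is uncountable regular and $N$ has the $\delta$-approximation property, it suffices to prove $\tilde{E}\cap z \in N$ for every $z \in N$ with $|z|<\delta$. Fix such a $z$, which we may assume to be a subset of $[\eta]^{<\omega}\times\mathcal{P}([\kappa]^{<\omega})^N$. The first observation is that hypothesis (1) decides each individual membership question ``$a \in j_E(X)$'': coding $[\kappa]^{|a|}$ inside $\eta^{<\omega}$, we get $j_E(X)\cap\eta^{<\omega}\in N$ for each relevant $X$, and since $a \in [\eta]^{<\omega}$ the truth value of $a \in j_E(X)$ can be read off from this trace.

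The crux is to make this decision \emph{uniform} across the fewer-than-$\delta$ many pairs occurring in $z$. Let $\sigma = \{X : (a,X)\in z\}$; then $|\sigma|<\delta$, so by the $\delta$-cover property there is $\tau \in N$ with $\sigma \subseteq \tau \subseteq \mathcal{P}([\kappa]^{<\omega})^N$ and $|\tau|<\delta$. Because $N \subseteq V$, an injection of $\delta$ into $\tau$ inside $N$ would also exist in $V$, so $|\tau|^N < \delta \le \kappa$ as well, and I may fix an enumeration $\tau = \langle X_\beta : \beta<\mu\rangle \in N$ with $\mu < \kappa$. Now define in $N$ the single set $A^{*} = \{\langle\beta\rangle^{\frown}s : \beta<\mu,\ s \in X_\beta\}\subseteq\eta^{<\omega}$. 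Since $\mu < \crit(j_E)$ we have $j_E(\mu)=\mu$ and $j_E(\langle X_\beta\rangle)(\beta)=j_E(X_\beta)$ for $\beta<\mu$, whence $j_E(A^{*})\cap\eta^{<\omega} = \{\langle\beta\rangle^{\frown}s : \beta<\mu,\ s \in j_E(X_\beta)\cap\eta^{<\omega}\}$. By hypothesis (1) this set belongs to $N$, and from it one recovers, uniformly in $\beta<\mu$, the traces $j_E(X_\beta)\cap\eta^{<\omega}$. Consequently $\tilde{E}\cap z = \{(a,X_\beta)\in z : a \in j_E(X_\beta)\}$ is definable in $N$ from $z$, $\tau$ and $j_E(A^{*})\cap\eta^{<\omega}$, so $\tilde{E}\cap z \in N$. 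By the $\delta$-approximation property, $\tilde{E}\in N$, i.e.\ $E \in N$.

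The main obstacle is precisely this globalization step: hypothesis (1) only supplies, one set at a time, the traces $j_E(X)\cap\eta^{<\omega}$, whereas the extender is a single object and each $E_a$ must be pinned down simultaneously. The cover property is what produces the set $\tau\in N$ allowing all relevant $X$'s to be bundled into one $A^{*}$ so that (1) need be invoked only once, and the approximation property is what converts the resulting $<\delta$-sized fragments back into genuine membership $\tilde{E}\in N$. The points requiring care are the bookkeeping of the coding $\langle\beta\rangle^{\frown}s$ and the verification $\mu<\kappa$ (ensuring $j_E$ fixes the enumeration indices), for which the inequality $\delta\le\kappa$ and the cover property are both essential.
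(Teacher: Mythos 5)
The paper itself gives no proof of this statement: Theorem~\ref{UniversalityI} is imported as background (it is Woodin's Universality Theorem, cited from the literature), so there is no in-paper argument to compare against; I can only assess your proof, which is correct and is the standard argument for results of this type. Direction (2)$\Rightarrow$(1) is routine, as you say. For (1)$\Rightarrow$(2), your two moves --- reducing via the $\delta$-approximation property to showing $\tilde{E}\cap z\in N$ for each $z\in N$ of size $<\delta$, and then amalgamating the fewer-than-$\delta$ many sets $X_\beta$ occurring in $z$ into the single set $A^*=\{\langle\beta\rangle^\smallfrown s : \beta<\mu,\ s\in X_\beta\}$ so that hypothesis (1) is invoked only once, with $\mu=|\tau|^N<\delta\le\kappa_E$ guaranteeing that $j_E$ fixes the indexing --- are exactly the right mechanism. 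Three small points to tighten. (i) In the reduction you should restrict $z$ to pairs $(a,X)$ with $X\subseteq[\kappa]^{|a|}$ (an $N$-definable condition), not merely $X\subseteq[\kappa]^{<\omega}$: for $X$ of mixed arity, $a\in j_E(X)$ can hold while $X\notin E_a$, so your closing identity for $\tilde{E}\cap z$ needs this extra clause. (ii) You implicitly use $\kappa_E\le\eta$ (hence $\mu<\eta$) so that $A^*\subseteq\eta^{<\omega}$; this is harmless, since if $\eta\le\kappa_E$ every $E_a$ is principal and (2) holds outright. (iii) The $\delta$-cover property is not actually needed where you invoke it: since $z\in N$, its projection $\sigma$ onto the second coordinate already lies in $N$, and your injection argument shows $|\sigma|^N<\delta$ directly; so your proof in fact uses only the approximation property, which is fine but worth noting.
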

\begin{theorem}[Universality Theorem II, Hamkins \cite{HamkinsUniversality}]\label{UniversalityII}
  Suppose  $\delta$ is a regular uncountable cardinal and that $N$ is an inner model of $\mathrm{ZFC}$ containing all the ordinals and having the $\delta$-covering and $\delta$-approximation properties. Suppose  $\gamma>\delta$ is a strong limit cardinal with $\cf(\gamma)\geq \delta$ and that
  $$j\colon V\rightarrow M$$
  is an elementary embedding with $\crit(j)>\delta$ and such that $H(\gamma)\s M$. Letting $E$ the $N$-extender of length $\gamma$ given by $j$ and
  $$j_E\colon N\rightarrow N_E$$
  the corresponding ultrapower, then
  \begin{enumerate}
      \item $N_E\cap H(\gamma)=N\cap H(\gamma)$.
      \item $E\in N$.
  \end{enumerate}
\end{theorem}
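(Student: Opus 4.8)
The plan is to prove the two conclusions in the order (1) then (2), obtaining (2) from (1) together with Universality Theorem~\ref{UniversalityI}. Write $\kappa=\crit(j)$; since $\kappa$ is the critical point of an elementary embedding it is measurable, hence inaccessible, and $\kappa>\delta$. Note also that $\gamma$ being a strong limit gives $H(\gamma)=V_\gamma$, while $H(\gamma)\subseteq M$ gives $V_\gamma=V_\gamma^M$.

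First I would analyze the factor embedding. Let $k\colon N_E\to j(N)$ be the canonical map with $k\circ j_E=j\restriction N$; as $E$ has length $\gamma$ one has $k\restriction\gamma=\id$ and $\crit(k)\ge\gamma$. I claim $V_\gamma^{N_E}=V_\gamma^{j(N)}$. Indeed, for each $\beta<\gamma$ the set $\mathcal P(\beta)^{N_E}$ has $V$-cardinality $\le 2^\beta<\gamma$ (strong limit), so the ordinal $(2^\beta)^{N_E}<\gamma\le\crit(k)$; hence $k$ is continuous at $\mathcal P(\beta)^{N_E}$ and fixes each of its (rank ${<}\gamma$) elements, giving $\mathcal P(\beta)^{j(N)}=k\!\left(\mathcal P(\beta)^{N_E}\right)=\mathcal P(\beta)^{N_E}$. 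An $\in$-induction (coding each element of $V_\gamma$ by a bounded subset of $\gamma$) then yields $V_\gamma^{N_E}=V_\gamma^{j(N)}$. This step is purely extender-theoretic and uses neither the approximation property nor $E\in N$.

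Next I would reduce (1) to the agreement $V_\gamma^{N}=V_\gamma^{j(N)}$, which is where the Definability Theorem~\ref{Definability} enters. Since $|p|\le 2^\delta<\kappa$ and $j$ fixes $H(\delta^+)$ pointwise (because $\crit(j)>\delta$), the parameter $p:=N\cap H(\delta^+)$ satisfies $j(p)=\{\,j(y):y\in p\,\}=p$; moreover $p\in V_\gamma$, as $2^\delta<\gamma$. Thus $N$ and $j(N)$ are picked out by the \emph{same} $\Sigma_2$ formula and the \emph{same} parameter $p$, over $V$ and over $M$ respectively. I would use that this definition localizes below $\gamma$: for $x\in V_\gamma$, membership $x\in N$ is witnessed by the unique transitive $W\subseteq V_\lambda$ (for a cardinal $\lambda$ with $\max(\mathrm{rank}(x),\delta^+)<\lambda<\gamma$) satisfying $W\cap H(\delta^+)=p$ and certified by $p$ through the $\delta$-cover and $\delta$-approximation properties, and this witness lies in $V_\gamma$. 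As $V_\gamma=V_\gamma^M$ and $j(p)=p$, the very same $W$ certifies $x\in j(N)$ inside $M$; by the uniqueness underlying the Definability Theorem, $N\cap V_\lambda=W=j(N)\cap V_\lambda$, and letting $\lambda\to\gamma$ gives $V_\gamma^N=V_\gamma^{j(N)}$. Combined with the factor-map computation this is exactly $N_E\cap H(\gamma)=N\cap H(\gamma)$, i.e.\ (1).

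Finally I would bootstrap (2) from (1) by verifying the hypothesis of Universality Theorem~\ref{UniversalityI}. Fix $A\in N\cap\mathcal P(\gamma^{<\omega})$; since $j_E(A)\cap\gamma^{<\omega}\subseteq\gamma^{<\omega}\subseteq N$, the $\delta$-approximation property of $N$ reduces the task to showing $j_E(A)\cap z\in N$ for every $z\in N$ with $z\subseteq\gamma^{<\omega}$ and $|z|<\delta$. Such a $z$ has rank ${<}\gamma$, so $z\in N\cap V_\gamma=N_E\cap V_\gamma$ by (1), whence $z\in N_E$; then $j_E(A)\cap z\in N_E$ has rank ${<}\gamma$, so $j_E(A)\cap z\in N_E\cap V_\gamma=N\cap V_\gamma\subseteq N$. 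Hence $j_E(A)\cap\gamma^{<\omega}\in N$, the hypothesis of Universality Theorem~\ref{UniversalityI} holds, and we conclude $E\in N$, which is (2).

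The main obstacle is the middle step, $V_\gamma^N=V_\gamma^{j(N)}$: that passing to $M$ does not disturb $N$ below $\gamma$. Everything there hinges on the \emph{locality} and \emph{absoluteness} of the ground-model definition---that the $\Sigma_2$ definition of $N$ from $p$ can be evaluated inside $V_\gamma=V_\gamma^M$ and has a unique solution there---forcing $N$ and $j(N)$ to coincide on $V_\gamma$. By contrast, the factor-map computation in the second paragraph and the derivation of the coherence hypothesis from (1) in the last paragraph are routine.
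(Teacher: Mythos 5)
First, note that the paper itself offers no proof of this statement: Theorem~\ref{UniversalityII} is quoted from Hamkins \cite{HamkinsUniversality} and used as a black box, so your proposal can only be compared with the standard argument from the literature. In architecture it matches that argument: agreement of $N_E$ with $j(N)$ below $\gamma$ via the factor map $k$; agreement of $j(N)$ with $N$ below $\gamma$ via the fixed parameter $p=N\cap H(\delta^+)$ together with the local uniqueness of transitive sets with the $\delta$-approximation and cover properties; and then clause~(2) from clause~(1), Theorem~\ref{UniversalityI}, and the $\delta$-approximation property of $N$. That last reduction is carried out correctly, and you are also right that one must invoke the certificate-and-uniqueness mechanism inside the \emph{proof} of Theorem~\ref{Definability} rather than its bare statement: $\Sigma_2$-definability by itself does not transfer to $M$ or localize below $\gamma$, since $\gamma$ is not assumed $\Sigma_2$-correct in either model.

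There is, however, one concretely false step, and it propagates through both halves of your argument: a strong limit cardinal need not be a $\beth$-fixed point, so \enquote{$\gamma$ strong limit gives $H(\gamma)=V_\gamma$} is wrong. For instance, with $\delta=\omega_1$ and $\gamma=\beth_{\omega_1}$ (a strong limit of cofinality $\delta$), the set $V_{\omega_1}$ has rank $\omega_1<\gamma$ but cardinality $\beth_{\omega_1}=\gamma$, so $V_{\omega_1}\in V_\gamma\setminus H(\gamma)$. Consequently \enquote{$V_\gamma=V_\gamma^M$} is unjustified: the hypothesis $H(\gamma)\subseteq M$ yields $H(\gamma)^M=H(\gamma)$ but says nothing about $V_\gamma\cap M$. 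This infects your second paragraph, whose coding induction only reaches sets that are hereditarily of size ${<}\gamma$ \emph{inside} the models---what it actually proves is $H(\gamma)^{N_E}=H(\gamma)^{j(N)}$, not $V_\gamma^{N_E}=V_\gamma^{j(N)}$---and your third paragraph, where the certificates must be taken of the form $N\cap H(\lambda)$ (these do lie in $H(\gamma)=H(\gamma)^M\subseteq M$, having size $2^{<\lambda}<\gamma$) rather than transitive $W\subseteq V_\lambda$, which cannot be guaranteed to belong to $M$ at all. The repair is routine precisely because conclusion~(1) concerns $H(\gamma)$: observe that $N'\cap H(\gamma)=H(\gamma)^{N'}$ for \emph{any} inner model $N'$ and $V$-cardinal $\gamma$ (the $N'$-cardinality of the transitive closure of $x\in N'$ is an ordinal that $N'$, hence $V$, puts in bijection with it, so it has the same $V$-cardinality and is therefore below $\gamma$), note the models' agreement on $\mathcal P(\beta)$ for $\beta<\gamma$ makes $\gamma$ a strong limit in $N_E$ and $j(N)$ so that internal codings by bounded subsets of $\gamma$ exist, and rerun both steps with $H(\gamma)$ and $H(\lambda)$ in place of $V_\gamma$ and $V_\lambda$. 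Your final paragraph already lives at the $H(\gamma)$ level, since $|z|<\delta\leq\cf(\gamma)$ forces the transitive closure of $z$ to have size ${<}\gamma$, and it survives this substitution unchanged.
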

Under appropriate assumptions the above theorems apply to $\HOD$:
\begin{theorem}[Woodin, \cite{WooPartI}]\label{thm: WoodinCoveringApprx}
    Assume the $\HOD$ hypothesis holds and that $\delta$ is $\HOD$-supercompact. Then, 
    \begin{enumerate}
        \item $\HOD$ is a weak extender model for the supercompactness of $\delta$.
        \item $\HOD$ has the $\delta$-covering and $\delta$-approximation properties.
    \end{enumerate}
\end{theorem}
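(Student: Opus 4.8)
The plan is to treat the two clauses separately and to reduce everything to a general principle about weak extender models. Clause~(1) requires no new work: it is exactly the assertion of Theorem~\ref{thm:WoodinHODsupercompact}, so I would simply invoke it, obtaining that $\HOD$ is a weak extender model for the supercompactness of $\delta$ and, in particular, that $\delta$ is supercompact in $V$ (witnessed by the very ultrafilters supplied by the weak extender model property). All the substance lies in clause~(2), which I would derive as an instance of the statement that \emph{any} inner model $N$ which is a weak extender model for the supercompactness of $\delta$ has the $\delta$-cover and $\delta$-approximation properties, applied to $N=\HOD$.

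For the set-up, fix $\lambda>\delta$ and let $\mathcal U$ be the normal fine $\delta$-complete ultrafilter on $\mathcal P_\delta(\lambda)\cap N$ furnished by the weak extender model hypothesis, so that $\mathcal U_N:=\mathcal U\cap N\in N$. Form $j\colon V\to M=\Ult(V,\mathcal U)$, so that $\crit(j)=\delta$, $j(\delta)>\lambda$ and $M^\lambda\s M$. The key consequence is that the seed $j[\lambda]=[\mathrm{id}]_{\mathcal U}$ represents a generic element of $\mathcal P_\delta(\lambda)\cap N$ and therefore lies in $j(N)$, with $j(N)$-cardinality below $j(\delta)$. The cover property now follows by a clean reflection: given $\sigma\s N$ with $|\sigma|<\delta$ we may take $\sigma\s\lambda$, and since $|\sigma|<\delta=\crit(j)$ we have $j(\sigma)=j[\sigma]\s j[\lambda]$. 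Thus $M$ satisfies ``there is $\tau\in j(N)$ with $|\tau|<j(\delta)$ and $j(\sigma)\s\tau$'', witnessed by $\tau=j[\lambda]$; as $N$ is a definable inner model this is the $j$-image of ``there is $\tau\in N$ with $|\tau|<\delta$ and $\sigma\s\tau$'', so elementarity delivers the desired cover inside $N$.

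The approximation property is the crux. Given $A\s\lambda$ whose $<\delta$-approximations $A\cap\tau$ (for $\tau\in N$, $|\tau|<\delta$) all lie in $N$, I would apply $j$ to this hypothesis and evaluate it at the small set $\tau=j[\lambda]\in j(N)$; since $j(A)\cap j[\lambda]=j[A]$, this already gives $j[A]\in j(N)$. Because $j[\lambda]\in j(N)$, its increasing enumeration is computed correctly inside $j(N)$ and, by uniqueness, coincides with $j\restriction\lambda$, so $j\restriction\lambda\in j(N)$ as well; composing it with $j[A]$ recovers $A=\{\alpha<\lambda:(j\restriction\lambda)(\alpha)\in j[A]\}\in j(N)$. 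The remaining task—descending from $j(N)$ to $N$—is the step I expect to be the main obstacle, since a priori $j(N)$ may carry more subsets of $\lambda$ than $N$ does. To close the gap I would factor $j\restriction N=\ell\circ k$ through the internal ultrapower $k\colon N\to N^{*}=\Ult(N,\mathcal U_N)$ (legitimate because $\mathcal U_N\in N$, whence $N^{*}\s N$), with factor map $\ell\colon N^{*}\to j(N)$ satisfying $\ell(k[\lambda])=j[\lambda]$ and $\crit(\ell)>\lambda$. The point is then to show $A\in\mathrm{range}(\ell)$: as $A\s\lambda<\crit(\ell)$, any preimage is fixed by $\ell$ and hence equals $A\in N^{*}\s N$. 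Verifying that $A$ (equivalently $j[A]$) lands in the range of $\ell$ is exactly where one must use the approximation hypothesis beyond a single application of $j$, and is the delicate heart of the argument; alternatively one may cite Woodin's original proof in \cite{WooPartI} at this juncture.
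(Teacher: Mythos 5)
First, some context: the paper never proves this theorem at all — it is quoted from Woodin's \cite{WooPartI} — so your proposal has to stand on its own. Its skeleton is the right one, and it matches how the paper itself organizes these facts: clause~(1) is literally Theorem~\ref{thm:WoodinHODsupercompact}, and clause~(2) is an instance of Woodin's general theorem that \emph{any} weak extender model for the supercompactness of $\delta$ has the $\delta$-cover and $\delta$-approximation properties (the paper invokes exactly this general fact in the proof of Theorem~\ref{C(n)areabsorbed}). Your proof of the cover property is correct, and your partial work on the approximation property is also correct as far as it goes: $j[A]=j(A)\cap j[\lambda]\in j(N)$, hence $A\in j(N)$ via $j\restriction\lambda\in j(N)$; the factor map $\ell\colon N^{*}=\Ult(N,\mathcal U\cap N)\to j(N)$ is well defined and elementary, and your claim $\crit(\ell)>\lambda$ is right (it follows from $\ell(k[\lambda])=j[\lambda]$ and the order-type computation $\xi=\otp(k[\lambda]\cap k(\xi))$).

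The final step, however, is a genuine gap and not a deferred verification. Knowing $\crit(\ell)>\lambda$ gives only the uniqueness half of what you need: \emph{if} $A$ has a preimage under $\ell$, then that preimage equals $A$, so $A\in N^{*}\subseteq N$. It gives no purchase on the existence of a preimage. Concretely, the range of $\ell$ is $\{[f]_{\mathcal U}:f\in N\}$, so ``$A\in\mathrm{ran}(\ell)$'' says precisely that $A$ is represented by a function lying in $N$ — equivalently, that the $V$-function $\sigma\mapsto A\cap\sigma$ (or its collapsed version $\sigma\mapsto\pi_\sigma[A\cap\sigma]$) agrees $\mathcal U$-almost everywhere with some member of $N$. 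Producing such a function is the entire content of Woodin's theorem; nothing about critical points can substitute for it, since an elementary embedding between transitive models with critical point above $\lambda$ does not in general force the two models to have the same subsets of $\lambda$ (generic ultrapowers $j\colon V\to M$ with $\crit(j)=\omega_1$ and new reals in $M$ show this). Moreover, the one resource available — the approximation hypothesis on $A$ — has already been spent in the single application that produced $j[A]\in j(N)$, and a second, more subtle use of it is what Woodin's argument supplies. Your fallback of ``citing Woodin at this juncture'' collapses the proposal into what the paper already does, namely quote the theorem; as a proof, the heart of clause~(2) is missing.
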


\subsection{Some forcing posets}\label{sec: some forcing posets}
For regular cardinals  $\gamma<\mu$ we denote $$S^\mu_{\gamma}:=\{\theta<\mu: \cf(\theta)=\gamma\}.$$ 

A stationary set $S\s S^\mu_\gamma$ is called non-reflecting if $S\cap \alpha$ is non-stationary in $\alpha$ for all $\alpha<\mu.$ There is a natural forcing order to add such an object.
\begin{definition}
Conditions in $\mathbb{NR}(S^\mu_{\gamma})$ are functions $p\colon \alpha\rightarrow \{0,1\}$ for some $\alpha<\mu$ and letting
$S_p=\{\xi<\alpha: p(\xi)=1\},$ $S_p\s S^\mu_\gamma$ and
for every limit $\beta\leq \alpha$ there is a closed unbounded set $c\s \beta$ such that $S_p\cap c=\emptyset.$

Conditions in $\mathbb{NR}(S^\mu_{\gamma})$ are ordered by  reversed inclusion.
\end{definition}

\begin{fact}[Folklore, see {\cite[\S6]{CummingsHandbook}}]Assume $\mu=\mu^{<\mu}$. Then,
    \begin{enumerate}
        \item If $G\s \mathbb{NR}(S^\mu_{\gamma})$ is generic over $V$ then
        $$S_G:=\{\xi<\mu : \exists p\in G,\, p(\xi)=1\}$$
    is a  non-reflecting stationary  such that $S_G\s  S^\mu_{\gamma}$.
        \item $\mathbb{NR}(S^\mu_{\gamma})$ is a $\mu^+$-cc partial order.
        \item $\mathbb{NR}(S^\mu_{\gamma})$ is a $\mu$-strategically-closed partial order.
    \end{enumerate}
\end{fact}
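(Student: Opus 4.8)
The three clauses are largely independent, so I will treat them in the order (2), (3), (1), since the stationarity of $S_G$ relies on the closure property. Clause (2) is a pure counting argument. Since $\mu=\mu^{<\mu}$ forces $2^{<\mu}=\mu$, and every condition is a function $p\colon\alpha\to\{0,1\}$ with $\alpha<\mu$, the number of conditions is at most $\sum_{\alpha<\mu}2^{|\alpha|}\le\mu\cdot 2^{<\mu}=\mu$. Hence $|\mathbb{NR}(S^\mu_\gamma)|\le\mu$, so every antichain has size at most $\mu<\mu^+$ and the $\mu^+$-cc is immediate. I also record that $\mu=\mu^{<\mu}$ forces $\mu$ to be regular (otherwise $\mu^{\cf(\mu)}>\mu$ by K\"onig), a fact I use below.

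For clause (3) the plan is to exhibit a winning strategy for the good player II in the closure game, keeping a decreasing chain of conditions alive through every limit stage $<\mu$. The strategy is uniform: whenever it is II's turn, whether at a successor stage (after I has played some $p$) or at a limit stage (having formed the union $q$ of the previously played conditions), II appends a single $0$ on top, i.e.\ plays $p\cup\{(\mathrm{dom}(p),0)\}$ (resp.\ $q\cup\{(\mathrm{dom}(q),0)\}$). Each such move is legitimate, as adjoining a $0$ does not enlarge $S_p$, so all non-reflection clauses are inherited. The substance is that at a limit stage $\eta<\mu$ the union $q_\eta=\bigcup_{\xi<\eta}p_\xi$ is again a condition. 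Writing $\alpha_\eta=\sup_{\xi<\eta}\mathrm{dom}(p_\xi)$, which is $<\mu$ by regularity, the coordinates at which II placed its ``top $0$'s'' form a set $B$ cofinal in $\alpha_\eta$ on which $q_\eta$ is identically $0$. The key point I must verify is that $B$ is \emph{closed}: any limit point $c$ of $B$ is the supremum of the domains accumulated up to some earlier limit stage $\eta'\le\eta$, and there II placed a $0$ at $c=\alpha_{\eta'}$, so $c\in B$. Thus $B$ is a club in $\alpha_\eta$ disjoint from $S_{q_\eta}$, which—together with the clubs inherited at the limits below $\alpha_\eta$—shows $q_\eta$ is a condition. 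The main obstacle is exactly this closedness of $B$; it is what forces II to reset the top coordinate to $0$ at limit stages and not merely at successors.

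For clause (1) I work in $V[G]$. By genericity the domains of conditions are unbounded in $\mu$ (given $p$, padding with $0$'s to any length yields a condition), so for each limit $\beta<\mu$ there is $p\in G$ with $\mathrm{dom}(p)>\beta$ and $S_G\cap\beta=S_p\cap\beta$; the ground-model club witnessing the non-reflection clause of $p$ at $\beta$ remains a club in $V[G]$, giving that $S_G$ is non-reflecting. Since clause (3) yields that $\mathbb{NR}(S^\mu_\gamma)$ is $<\mu$-distributive, no bounded subsets of $\mu$ are added and cofinalities below $\mu$ are preserved, so $S_G\subseteq S^\mu_\gamma$ persists in $V[G]$. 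The crux is stationarity. Given a condition $p$ and a name $\dot C$ for a club in $\mu$, I will build in $V$ a decreasing sequence $\langle p_i:i<\gamma\rangle$ below $p$ together with ordinals $\eta_i$ increasing to $\eta:=\sup_i\eta_i=\sup_i\mathrm{dom}(p_i)$, arranging $p_{i+1}\Vdash\check\eta_i\in\dot C$ and keeping the top coordinates $0$, so that by the computation in (3) the union $q^-=\bigcup_i p_i$ is a condition whose domain $\eta$ has a club of $0$'s below it. Since $\gamma$ is regular, $\cf(\eta)=\gamma$, so $\eta\in S^\mu_\gamma$ and I may set $q=q^-\cup\{(\eta,1)\}$, still a condition (the club of $0$'s below $\eta$ witnesses the clause at $\eta$). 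Then $q\le p$ forces $\eta\in\dot C$—as $\dot C$ is closed and $\eta=\sup_i\eta_i$ with each $\eta_i\in\dot C$—and $\eta\in S_G$. As $p$ and $\dot C$ were arbitrary, $S_G$ is stationary.
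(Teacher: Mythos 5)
Your proof is correct, and it is exactly the standard folklore argument that the paper itself does not reproduce: the Fact is stated without proof, with a citation to Cummings' Handbook chapter, so there is no in-paper argument to diverge from. Your three steps — the counting bound $|\mathbb{NR}(S^\mu_\gamma)|\le\mu$ for the $\mu^+$-cc, the ``append a top $0$'' strategy whose accumulated coordinates form a \emph{club} of zeros at limit stages for strategic closure, and the reuse of that club-of-zeros construction (capped with a single $1$ at a point of cofinality $\gamma$) to meet a named club and prove stationarity — are precisely the intended ones, including the key subtleties (regularity of $\mu$ from $\mu^{<\mu}=\mu$, closedness of the set of top-zero coordinates, and absoluteness of $S^\mu_\gamma$ via ${<}\mu$-distributivity).
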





Suppose that $S\s \mu^+$ is a stationary set. There is a standard forcing, due to Abraham and Shelah, to  \emph{shoot} a club through  $S$:

\begin{definition}[Abraham--Shelah, \cite{AbrShe}]
Conditions in $\mathbb{C}(S)$ consists of closed bounded sets $c\s S$.  The order relation of $\mathbb{C}(S)$ is   $\sqsupseteq$-extension.  
\end{definition}

\begin{fact}[{\cite{AbrShe}}]
Suppose that $\mu^{<\mu}=\mu$ and $2^\mu=\mu^+.$
    \begin{enumerate}
        \item $\mathbb{C}(S)$ forces a club $C\s S$.
        \item $\mathbb{C}(S)$ is $\mu^{++}$-cc.
    \end{enumerate}
\end{fact}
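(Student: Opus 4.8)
The plan is to verify the two clauses separately: for (1) I would analyze the generic object directly, and for (2) I would use a crude cardinality count that makes the chain condition immediate under the arithmetic hypothesis.

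For (1), let $G\s\mathbb{C}(S)$ be generic and set $C:=\bigcup G$. Since every condition is a subset of $S$, clearly $C\s S$. To see that $C$ is unbounded, I would check that for each $\alpha<\mu^+$ the set $D_\alpha:=\{c\in\mathbb{C}(S):\max(c)>\alpha\}$ is dense: given a condition $c$, stationarity (hence unboundedness) of $S$ furnishes $\beta\in S$ with $\beta>\max\{\alpha,\max(c)\}$, and then $c\cup\{\beta\}$ is again a closed bounded subset of $S$—adjoining a single ordinal above $\max(c)$ creates no new limit points—which end-extends $c$ and lies in $D_\alpha$. By genericity $C\cap D_\alpha\neq\emptyset$ for every $\alpha$, so $C$ is unbounded in $\mu^+$. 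For closedness the key structural point is that the ordering is end-extension, so whenever $c,d\in G$ satisfy $\max(d)\geq\max(c)$ we have $c=d\cap(\max(c)+1)$; hence $C\cap(\max(c)+1)=c$ for every $c\in G$, and $C$ coincides locally with genuine (closed) conditions. Concretely, if $\alpha$ is a limit point of $C$, pick $c\in G$ with $\max(c)>\alpha$; then $\alpha$ is a limit point of the closed set $c$, whence $\alpha\in c\s C$. This shows $C$ is a club contained in $S$.

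For (2), the point is simply that $\mathbb{C}(S)$ is small. Every condition is a bounded subset of $\mu^+$, hence a subset of some $\gamma<\mu^+$ and so of cardinality at most $\mu$. Therefore
$$|\mathbb{C}(S)|\leq\sum_{\gamma<\mu^+}2^{|\gamma|}\leq \mu^+\cdot 2^\mu=\mu^+,$$
using the hypothesis $2^\mu=\mu^+$. A partial order of cardinality $\mu^+$ admits no antichain of size $\mu^{++}$, so $\mathbb{C}(S)$ is $\mu^{++}$-cc.

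I do not anticipate a serious obstacle. The only points requiring care are that the end-extension ordering forces $C$ to coincide locally with closed conditions (so that closedness of $C$ reduces to closedness of individual conditions), and that a top-extension by a single ordinal preserves closedness; the chain condition is then purely arithmetical. Note that the hypothesis $\mu^{<\mu}=\mu$ is not used for either of these two clauses—it is $2^\mu=\mu^+$ that does the work in (2)—and it would instead become relevant for the distributivity or strategic-closure properties of $\mathbb{C}(S)$ that guarantee cardinals at and below $\mu^+$ are preserved.
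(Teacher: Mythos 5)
Your proof is correct, and since the paper states this as a Fact cited to Abraham--Shelah without including a proof, there is nothing to diverge from: your argument (genericity plus end-extension for clause (1), and the cardinality bound $|\mathbb{C}(S)|\leq\mu^+\cdot 2^\mu=\mu^+$ for clause (2)) is exactly the standard one. Your closing observation is also accurate: $2^\mu=\mu^+$ is what drives the chain condition, while $\mu^{<\mu}=\mu$ only matters for the preservation/distributivity facts the paper uses elsewhere.
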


The poset $\mathbb{C}(S)$ is in general ill-behaved unless $\mu=\aleph_0$ or $S$ is chosen carefully; for example, $\mathbb{C}(S)$ may collapse $\mu^+$. Abraham and Shelah show in \cite[Theorem~1]{AbrShe} that if $S\s\mu^+$ is \emph{fat} then $\mathbb{C}(S)$ is $\mu^+$-distributive, hence it preserves cardinals ${\geq}\mu^+$.\footnote{A stationary set $S\s\mu^+$ is called fat if there is a club $C\s \mu^+$ such that $C\cap S$ contains closed sets of arbitrarily large order types below $\kappa$.}  
In our particular circumstances we will be able  to ignore this technical aspect thanks to the following fact:



\begin{fact}[Folklore]\label{fact: folklore}
Suppose that $\mu$ is a cardinal, $\gamma\leq \mu$ is regular and that $\dot{S}$ is the canonical $\mathbb{NR}(S^{\mu^+}_{\gamma})$-name for the generic non-reflecting stationary set. Then, \(\textnormal{Add}(\mu^+,1)\) is equivalent to the two step iteration \begin{equation*}
    \mathbb{NR}(S^{\mu^+}_{\gamma}) * \dot{\mathbb{CS}}(\mu^+\setminus\dot{S}).\qedhere
\end{equation*}
    \end{fact}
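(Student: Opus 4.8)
The plan is to locate a single dense subset $D$ of the iteration $\mathbb{NR}(S^{\mu^+}_\gamma)*\dot{\mathbb{CS}}(\mu^+\setminus\dot S)$ that is separative, atomless and $<\mu^+$-closed of size $\mu^+$, and then to invoke the standard uniqueness principle for such posets. Recall the folklore fact that, assuming $2^\mu=\mu^+$ (in force throughout this section, together with $\mu^{<\mu}=\mu$), any separative, atomless, $<\mu^+$-closed notion of forcing of size $\mu^+$ is forcing equivalent to $\textnormal{Add}(\mu^+,1)$; this is the $\mu^+$-analogue of the classical theorem that every countable atomless separative poset is Cohen forcing. Thus it suffices to produce such a $D$ inside the iteration.

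I let $D$ consist of those conditions $(p,\check c)$ for which $p\colon\eta+1\to\{0,1\}$ for some $\eta<\mu^+$, $p(\eta)=0$, and $c$ is a (ground-model) closed subset of $\eta+1$ with $\max(c)=\eta$ and $c\cap S_p=\emptyset$; thus $\check c$ names a legitimate condition of $\mathbb{CS}(\mu^+\setminus\dot S)$ whose maximum reaches the top of $\dom(p)$. To see that $D$ is dense, start from an arbitrary $(q,\dot d)$ and use that $\mathbb{NR}(S^{\mu^+}_\gamma)$ is $\mu$-strategically closed (hence separative) to strengthen $q$ to some $q'$ that decides a concrete closed bounded $c_0\in V$ end-extending $\dot d$; then pad $q'$ by zeros up to a domain $\eta+1$ with $\eta>\max(c_0)$ and $p(\eta)=0$, and set $c:=c_0\cup\{\eta\}$. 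Since $S_{q'}$ is non-reflecting and no new points are added to the generic stationary set beyond $\dom(q')$, the pair $(p,\check c)$ is a member of $D$ below $(q,\dot d)$.

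The crux, and the step I expect to be the main obstacle, is the $<\mu^+$-closure of $D$. Consider a descending sequence $\langle(p_i,\check c_i):i<\delta\rangle$ in $D$ with $\delta<\mu^+$, and set $\alpha=\sup_i\dom(p_i)$, $p=\bigcup_i p_i$ and $c=\bigcup_i c_i$. The danger is that $S_p$ reflect at the new limit point $\alpha$, in which case $p$ would fail to be an $\mathbb{NR}$-condition; this is precisely why $\mathbb{NR}(S^{\mu^+}_\gamma)$ is only \emph{strategically} closed rather than closed. However, because each $(p_i,\check c_i)\in D$ carries a club reaching the top of its domain, the set $c$ is \emph{closed and cofinal} in $\alpha$ with $c\cap S_p=\emptyset$, so $c$ itself witnesses that $S_p$ does \emph{not} reflect at $\alpha$. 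Extending $p$ by $p(\alpha)=0$ and passing to $d:=c\cup\{\alpha\}$ then yields a bona fide condition $(p,\check d)\in D$ below the entire sequence. In short, the club-shooting coordinate \emph{cures} the reflection that would otherwise break closure, and it is exactly this interaction between the two factors that makes the combined iteration $<\mu^+$-closed although its first factor is not.

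It remains to record the two routine properties. The poset $D$ is separative and atomless: below any $(p,\check c)\in D$ one may take the least $\xi\in S^{\mu^+}_\gamma$ with $\xi>\eta$ and obtain two incompatible extensions according to whether $\xi$ is placed into the generic stationary set or into the generic club, so no condition is an atom. Moreover $|D|=\mu^+$, as a member of $D$ is coded by the pair $(S_p,c)$ of subsets of some $\beta<\mu^+$, of which there are at most $\mu^+\cdot 2^\mu=\mu^+$ under $2^\mu=\mu^+$. With $D$ separative, atomless, $<\mu^+$-closed and of size $\mu^+$, the uniqueness principle applies and shows that the iteration --- into which $D$ densely embeds --- is forcing equivalent to $\textnormal{Add}(\mu^+,1)$, as desired.
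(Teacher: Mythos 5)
Your proof is correct and follows essentially the same route as the paper's own argument: both isolate the dense set of conditions in which the shot club reaches the top of the $\mathbb{NR}$-condition's domain, prove density by using the distributivity (strategic closure) of $\mathbb{NR}(S^{\mu^+}_\gamma)$ to decide the club name in the ground model, prove $<\mu^+$-closure by observing that the accumulated club kills reflection at the supremum, and then invoke the uniqueness of separative, atomless, $<\mu^+$-closed posets of size $\mu^+$ (under $\mu^{<\mu}=\mu$, $2^\mu=\mu^+$) to conclude equivalence with $\mathrm{Add}(\mu^+,1)$. The only blemish is the parenthetical ``hence separative'' --- strategic closure does not imply separativity, and what your density step actually uses is distributivity --- but this does not affect the argument.
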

Given a set  $X\s \alpha$ and a cardinal $\kappa$, there is a standard forcing that \emph{codes} $X$ within the power-set-function-pattern in the interval $[\kappa^+, \kappa^{+\alpha+1}]$. This poset is originally due to McAloon \cite{McAloon} and is defined as follows.

\smallskip

Let $\mathcal{E}_X\colon [\kappa^+,\kappa^{+\alpha}]\rightarrow [\kappa^+,\kappa^{+\alpha}]$ be the partial function defined as
$$\mathcal{E}_X(\kappa^{+\beta+1}):=\begin{cases}
    \kappa^{+\beta+2}, & \text{if $\beta\in X$;}\\
    \kappa^{+\beta+3}, & \text{if $\beta\notin X$}.
    \end{cases}$$
\begin{definition}[Coding]\label{def: codingposet}
    $\mathrm{Code}(X,\kappa)$ is the Easton-product $$\prod_{\beta<\alpha}\mathrm{Add}(\kappa^{+\beta+1}, \mathcal{E}_X(\kappa^{+\beta+1})).$$
\end{definition}
Clearly, $\mathrm{Code}(X,\kappa)$ is a $\kappa^+$-directed-closed forcing. Also, if  the GCH holds in $[\kappa^+,\kappa^{+\alpha}]$, $\mathrm{Code}(X,\kappa)$ codes $X$: For a $V$-generic filter $G\s \mathrm{Code}(X,\kappa)$,
$$X=\{\beta<\alpha: V[G]\models 2^{\kappa^{+\beta+1}}=\kappa^{+\beta+2}\}.$$
Thus,  $X\in \HOD^{V[G]}.$ A classical argument due to McAloon\label{McAloon} \cite{McAloon} shows that $``V=\HOD$'' can be forced using a class iteration of the coding posets $\mathrm{Code}(X,\kappa)$ (see \cite{Reitz}). In fact, this poset forces the stronger axiom denoted $``V=g\HOD$'', where $g\HOD$ stands for the \emph{generic $\HOD$}; namely, $$g\HOD:=\{x: \text{For all set-sized posets $\mathbb{P},\;\one\forces_{\mathbb{P}}\check{x}\in \HOD$}\}.$$

We note that this coding forcing can be arranged to start above any given regular cardinal $\chi$, thereby yielding a $\chi$-directed-closed class product forcing. 

\smallskip

A poset $\mathbb{Q}$ is said to be \emph{cone homogeneous}\label{conehomogeneous} if for each pair of conditions $q_0,q_1\in \mathbb{Q}$ there are $q^*_0\leq q_0$ and $q^*_1\leq q_1$ such that $\mathbb{Q}/q^*_0$ is isomorphic to $\mathbb{Q}/q^*_1$. The next is the key standard fact about cone homogeneous posets.

\begin{fact}[{\cite[Theorem~26.12]{Jech}}]
    If $\mathbb{Q}$ is an ordinal definable and cone-homogeneous forcing then $\HOD^{V[G]}\s V$ for all $V$-generics $G\s \mathbb{Q}$.\qed
\end{fact}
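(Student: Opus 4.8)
The plan is to reduce the statement to the single assertion that every set of ordinals lying in $\HOD^{V[G]}$ already belongs to $V$, and then to establish this via a homogeneity lemma asserting that the weakest condition $\one_{\mathbb{Q}}$ decides every formula whose parameters are canonical names for ordinals. First I would record the reduction: working inside $\HOD^{V[G]}$, which is a transitive model of $\mathrm{ZFC}$, every set $x$ is coded by a set of ordinals $A_x\in\HOD^{V[G]}$ (well-order the transitive closure of $\{x\}$ and apply Gödel pairing), and the decoding of $A_x$ back to $x$ is absolute between $\HOD^{V[G]}$ and $V$. Hence it suffices to prove that whenever $A$ is a set of ordinals with $A\in\HOD^{V[G]}$, then $A\in V$.

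The core is the following homogeneity lemma: for any sentence $\sigma$ of the forcing language all of whose parameters are check-names $\check a$ for ground-model sets, no two conditions can force $\sigma$ and $\neg\sigma$ respectively. To see this, suppose $q_0\Vdash\sigma$ and $q_1\Vdash\neg\sigma$. Cone homogeneity furnishes $q_0^{*}\leq q_0$, $q_1^{*}\leq q_1$ and an isomorphism $\pi\colon \mathbb{Q}/q_0^{*}\to\mathbb{Q}/q_1^{*}$. Since $q_0^{*}\Vdash\sigma$, the top condition of $\mathbb{Q}/q_0^{*}$ forces $\sigma$; transporting along $\pi$ and using that an order isomorphism sends canonical names to canonical names—so that $\pi$ fixes $\sigma$, which mentions only check-names—I obtain that the top of $\mathbb{Q}/q_1^{*}$ forces $\sigma$, i.e.\ $q_1^{*}\Vdash\sigma$. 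As $q_1^{*}\leq q_1\Vdash\neg\sigma$, this is a contradiction. I expect the only genuinely delicate point of the whole argument to be here, namely the verification that the cone isomorphism fixes check-names and respects the forcing relation when $\mathbb{Q}/q_0^{*}$ and $\mathbb{Q}/q_1^{*}$ are viewed as subposets of $\mathbb{Q}$; everything else is routine.

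With the lemma in hand I finish as follows. Write $A$, as computed in $\HOD^{V[G]}$, via a formula and ordinal parameters $\bar\alpha$: for all ordinals $\xi$, $\xi\in A\iff V[G]\models\varphi(\xi,\bar\alpha)$. By the lemma, for each ordinal $\xi$ exactly one of $\one\Vdash\varphi(\check\xi,\check{\bar\alpha})$ and $\one\Vdash\neg\varphi(\check\xi,\check{\bar\alpha})$ holds: if some condition forces $\varphi(\check\xi,\check{\bar\alpha})$ then, by the lemma, none forces its negation, whence $\one$ decides it positively, and otherwise $\one$ forces the negation. Therefore $B:=\{\xi:\one\Vdash_{\mathbb{Q}}\varphi(\check\xi,\check{\bar\alpha})\}$ is defined in $V$ from the parameters $\mathbb{Q}$, $\varphi$, $\bar\alpha$, so $B\in V$; and by the truth lemma $\xi\in A\iff\one\Vdash\varphi(\check\xi,\check{\bar\alpha})\iff\xi\in B$, so $A=B\in V$. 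This yields $\HOD^{V[G]}\subseteq V$.

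Finally I would remark on the role of the ordinal-definability hypothesis. It is in fact not needed for the stated conclusion, since $B\in V$ follows from cone homogeneity alone. Its purpose is to sharpen the conclusion: if $\mathbb{Q}$ is ordinal definable then $B$, being defined from $\mathbb{Q}$, $\varphi$, and the ordinals $\bar\alpha$, is itself ordinal definable, hence $B\in\HOD^{V}$, and the same argument then gives the stronger inclusion $\HOD^{V[G]}\subseteq\HOD^{V}\subseteq V$, which is the form in which the fact is typically invoked.
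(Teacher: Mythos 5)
Your proof is correct. The paper itself gives no argument for this Fact (it is quoted from Jech with a citation), and your proof---reducing to sets of ordinals and then showing, via the cone isomorphisms, that any formula whose parameters are all check-names of ground-model sets is decided by $\one_{\mathbb{Q}}$, so that $A=\{\xi<\lambda : \one\Vdash_{\mathbb{Q}}\varphi(\check{\xi},\check{\bar{\alpha}})\}$ lies in $V$ by separation---is exactly the standard argument behind that citation, transposed from weak homogeneity to cone homogeneity; your closing observation is also accurate: cone homogeneity alone already yields $\HOD^{V[G]}\subseteq V$, and the ordinal definability of $\mathbb{Q}$ is needed only for the stronger conclusion $\HOD^{V[G]}\subseteq\HOD^{V}$.
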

All the posets mentioned so far (except for McAloon class product forcing) are cone-homogeneous. The above fact will be used both in \S\ref{sec: on the HOD dichotomy} and \S\ref{sec: where weak covering takes a hold} to control the HOD of the relevant generic extensions.

\section{The HOD Dichotomy and extendible-like cardinals}\label{sec: on the HOD dichotomy}

This section consists of three subsections: \S\ref{SectionFirstExtendible},  \S\ref{SectionTransferring} and \S\ref{sec: CCE}. In \S\ref{SectionFirstExtendible} we prove Theorem~\ref{thm:HOD_thm} which says that (even under the HOD hypothesis) the first extendible cardinal can be the first strongly compact cardinal in $\HOD$.  In \S\ref{SectionTransferring} we complement this result by showing that, again under the $\HOD$ hypothesis, the first extendible cardinal must be $C^{(1)}$-supercompact in $\HOD$, therefore a  ``big" supercompact cardinal in $\HOD$. 
Finally, in \S\ref{sec: CCE} we discuss \emph{cardinal-correct extendible} cardinals and show that in $\HOD$ of the model of \S\ref{SectionFirstExtendible} the first cardinal-correct extendible is not extendible in $\HOD$. 

\subsection{The first extendible can be the first strongly compact of $\HOD$}\label{SectionFirstExtendible}
The basic idea is to prove this by forcing with an iteration that shoots non-reflecting stationary sets and immediately after kills them. Our arguments combine ideas from  \cite{GoldbergOptimality} and Cheng--Friedman--Hamkins \cite{CFH}. We will also employ the characterization of extendibility via \emph{reflecting measures} due to Bagaria and Goldberg. (See Theorem~\ref{CharacterizingExtendibility} above.)

\smallskip


\begin{theorem}\label{thm:HOD_thm}
   Let $\delta$ be an extendible cardinal. Then there is a generic extension of the universe of sets where the \(\HOD\) hypothesis holds, $\delta$ remains extendible, and $\delta$ is the least strongly compact cardinal of $\HOD$.
    \end{theorem}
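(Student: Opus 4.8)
The plan is to realize $\HOD^{V[G]}$ as a model in which $\delta$ is supercompact but no smaller cardinal is strongly compact, while keeping $\delta$ extendible in $V[G]$ itself. Concretely, after a preliminary tame forcing securing $\gch$ (which preserves the extendibility of $\delta$), I would force with a reverse Easton iteration $\mathbb{P}=\mathbb{P}_{<\delta}*\mathbb{P}_{\ge\delta}$. Below $\delta$, at each inaccessible $\kappa$, the stage forcing is the three-step iteration $\mathbb{NR}(S^{\kappa^+}_\omega)*\dot{\mathrm{Code}}(\dot S_\kappa,\kappa)*\dot{\mathbb{CS}}(\kappa^+\setminus\dot S_\kappa)$, where $\dot S_\kappa$ names the generic non-reflecting stationary subset of $S^{\kappa^+}_\omega$, the middle factor (Definition~\ref{def: codingposet}) codes $\dot S_\kappa$ into the $\gch$-pattern on an interval just above $\kappa^+$, and the last factor shoots a club through its complement. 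Above $\delta$, the tail $\mathbb{P}_{\ge\delta}$ is a $\delta^+$-directed-closed McAloon-style class iteration forcing ``$V=g\HOD$'' on the ordinals above $\delta$.

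The two effects of the iteration pull apart cleanly. In $V[G]$, Fact~\ref{fact: folklore} tells us that $\mathbb{NR}(S^{\kappa^+}_\omega)*\dot{\mathbb{CS}}(\kappa^+\setminus\dot S_\kappa)$ is just $\mathrm{Add}(\kappa^+,1)$; since the interposed coding is $\kappa^+$-directed-closed, each stage of $\mathbb{P}_{<\delta}$ is (equivalent to) a $\kappa^+$-directed-closed forcing, and every $S_\kappa$ is rendered non-stationary by its killing club. Thus $V[G]$ carries no non-reflecting stationary sets added by $\mathbb{P}$, and, I claim, $\delta$ remains extendible there. In $\HOD^{V[G]}$, by contrast, each $S_\kappa$ is visible through the $\gch$-pattern, whereas the killing clubs are added by the cone-homogeneous club-shooting and so escape $\HOD^{V[G]}$; following the coding analysis of \cite{CFH} and \cite{GoldbergOptimality}, $S_\kappa$ remains a non-reflecting stationary subset of $\kappa^+$ in $\HOD^{V[G]}$. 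Finally, since $\mathbb{P}_{\ge\delta}$ forces ``$V=g\HOD$'' above $\delta$, $\HOD^{V[G]}$ and $V[G]$ agree above $\delta$, so a proper class of regular cardinals (e.g.\ successors of singulars) is not $\omega$-strongly measurable in $\HOD^{V[G]}$ and the $\HOD$ hypothesis holds.

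Granting these two points, the theorem follows. The $\HOD$ hypothesis together with the extendibility of $\delta$ lets me invoke Woodin's Theorem~\ref{thm: WoodinCoveringApprx}: $\HOD^{V[G]}$ is a weak extender model for the supercompactness of $\delta$, so $\delta$ is supercompact, hence strongly compact, in $\HOD^{V[G]}$. On the other hand, if some $\mu<\delta$ were strongly compact in $\HOD^{V[G]}$, then, working inside $\HOD^{V[G]}$, every stationary subset of $S^{\kappa^+}_{<\mu}\supseteq S^{\kappa^+}_\omega$ would reflect, for every $\kappa\ge\mu$; choosing an inaccessible $\kappa\in[\mu,\delta)$, the non-reflecting stationary set $S_\kappa$ gives a contradiction. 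Hence $\delta$ is the least strongly compact cardinal of $\HOD^{V[G]}$.

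What remains---and what I expect to be the crux---is the preservation of the extendibility of $\delta$ in $V[G]$, and here the killing step does the decisive work. I would verify extendibility through the reflecting-measure characterization of Bagaria and Goldberg (Theorem~\ref{CharacterizingExtendibility}), lifting, for cofinally many indestructibly $\Sigma_2$-correct $\lambda$, a normal fine ultrafilter on $T^*_{\delta,\lambda}$ through $\mathbb{P}$. The point of shooting and \emph{immediately killing} each $S_\kappa$ is exactly that the image of $\mathbb{P}_{<\delta}$ under such an embedding $j$, at its critical stage $\delta$, becomes $\mathrm{Add}(\delta^+,1)$ rather than the merely strategically-closed $\mathbb{NR}(S^{\delta^+}_\omega)$; being $\delta^+$-directed-closed, this stage (and the $\delta^+$-directed-closed tail $\mathbb{P}_{\ge\delta}$) can be met by a master condition built from the closure of the target, and $\lambda$ remains $\Sigma_2$-correct because it was chosen indestructibly so. Had we merely shot $S_\kappa$ without killing, the lift would be forced to add a non-reflecting stationary set at $\delta^+$, destroying the reflection that extendibility of $\delta$ requires. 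The main obstacle is therefore to balance the two demands on $\HOD^{V[G]}$: the coding must be non-homogeneous enough to drive each $S_\kappa$ into $\HOD^{V[G]}$ as a \emph{non-reflecting} stationary set (which requires a witness to non-reflection to survive the passage to $\HOD^{V[G]}$), yet the killing clubs and all global clubs disjoint from $S_\kappa$ must remain outside $\HOD^{V[G]}$ so that $S_\kappa$ stays \emph{stationary} there; reconciling this with the closure needed for the extendibility lift is the delicate engineering that the arguments of \cite{CFH} and \cite{GoldbergOptimality} are designed to handle.
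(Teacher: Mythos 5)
Your blueprint matches the paper's in outline (shoot a non-reflecting stationary set, kill it at once so the pair is $\mathrm{Add}(\kappa^+,1)$ by Fact~\ref{fact: folklore}, code the stationary set but not the club into the power-set pattern, then combine Theorem~\ref{thm: WoodinCoveringApprx} with stationary non-reflection in $\HOD$), but the step you yourself identify as the crux---preservation of extendibility---fails as you have designed it, and the missing idea is an \emph{extendible Laver function}. In your iteration the stage at $\kappa$ acts on $S^{\kappa^+}_\omega$, so when you take a normal fine ultrafilter $\mathcal U$ on $T^*_{\delta,\lambda}$ and form $j_{\mathcal U}(\mathbb P)$, the first nontrivial stage of the tail past the critical point sits at $\delta$ itself and is only $\delta^+$-directed closed. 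A master condition does not conjure an $M_{\mathcal U}[G]$-generic filter for that tail; it only guarantees compatibility with $j_{\mathcal U}``G$ once such a generic exists. To actually build the generic inside $V$ one must diagonalize against roughly $\lambda^+$ many dense sets (the extender-representation count used in the paper's footnote), which requires the tail to be $\lambda^+$-directed closed---and $\lambda$ is arbitrarily large, so $\delta^+$-closure is hopeless. The paper's construction exists precisely to fix this: it fixes an extendible Laver function $f$, lets the stage-$\alpha$ forcing act at $\lambda_\alpha^+$ where $\lambda_\alpha>f(\alpha)$, and chooses $\mathcal U$ with $j_{\mathcal U}(f)(\delta)=\lambda$, so that in $M_{\mathcal U}$ the critical stage of the tail sits above $\lambda$ and the entire tail acquires a dense $\lambda^+$-directed-closed subset. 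Without this anticipation device your lift cannot be carried out.

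Two further gaps. First, you defer the computation of $\HOD^{V[G]}$ to ``the coding analysis of \cite{CFH} and \cite{GoldbergOptimality},'' but this is the second genuine difficulty: to keep $S_\kappa$ stationary in $\HOD^{V[G]}$ you must show $\HOD^{V[G]}$ contains \emph{no} club disjoint from $S_\kappa$, which requires an exact upper bound on $\HOD^{V[G]}$. The paper gets this by proving cone homogeneity of the quotient $\mathbb P/G_0$ via an in-proof lemma that is specifically about Easton \emph{products} $\prod_{\alpha<\delta}\mathbb A_\alpha*\dot{\mathbb B}_\alpha$ (and even then the paper remarks that the naive identification of the quotient is not known to be true); your reverse Easton \emph{iteration} has no evident substitute for that lemma. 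Second, your class forcing $\mathbb P_{\geq\delta}$ ``forcing $V=g\HOD$ above $\delta$'' is self-defeating if it is genuine McAloon coding: that iteration codes \emph{all} sets of the final model into the continuum function, including the killing clubs, which would place them in $\HOD^{V[G]}$ and destroy the stationarity of the $S_\kappa$'s there. The paper avoids this trap by arranging $V=g\HOD$ in the ground model, \emph{before} the main forcing, so that $\HOD^{V[G]}=V[G_0]$ contains the stationary sets and the parameters needed to decode them, but not the clubs.
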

    \begin{proof}
   By virtue of \cite[Theorem~3.8 and Proposition~3.9]{GoldbergOptimality},
        we may start with a model $V$ where the following properties hold:
        \begin{enumerate}
            \item $\delta$ is an  extendible cardinal.
             \item  $\Sigma_2$-correct cardinals are  $\Sigma_2$-indestructibly correct. 
            
        \end{enumerate}
\begin{claim}
     For each ordinal $\alpha<\delta$, there is an interval of cardinals $I\s \delta$ with $\otp(I)=\alpha$ such that $2^\theta=\theta^+$ for all $\theta\in I.$ Also $V=\mathrm{gHOD}$ holds.
\end{claim}
\begin{proof}[Proof of claim]
 Fix an ordinal $\alpha<\delta$ and let $\lambda>\delta$ be a $\Sigma_2$-indestructibly correct  cardinal. We may force,  using $\lambda$-directed-closed forcing, the GCH within an interval of cardinals of order-type $\alpha$ above $\lambda$. In particular, we can force this configuration while keeping $\lambda$ a $\Sigma_2$-correct cardinal. By $\Sigma_2$-correctness of $\lambda$ in the generic extension there must be an interval of cardinals below $\lambda$ of order-type $\alpha$ where the GCH holds. But this poset does not change $V_\lambda$, so the said interval of cardinals  existed back in the ground model $V$. Finally, since $\delta$ is extendible (hence $\Sigma_3$-correct) in $V$ and $\alpha<\delta$, an interval of cardinals $I$ as in the claim must exist below $\delta$. 

 The argument for $V=\mathrm{gHOD}$ is similar.
\end{proof} 
Notice that the reflection argument presented above allows for the selection of an interval $I \subseteq \delta$ with a minimal element greater than any fixed $\beta < \delta$.

\smallskip


        We will make $\delta$ become the first strongly compact in $\HOD^{V[G]}$ by adding many non-reflecting stationary sets below $\delta$. Since  we wish to 
   keep $\delta$ extendible in $V[G]$, we plan to kill these stationary sets right after they are introduced employing the club shooting poset. By Fact~\ref{fact: folklore}, each of these two step iterations will be equivalent to Cohen forcing, and as a result they will be nicely behaved. The key point is that the  non-reflecting stationaries will be coded into $\HOD^{V[G]}$ while their club counterparts will  not. 

   \smallskip

   Let us start by fixing an \emph{extendible Laver function} $f\colon \delta\rightarrow\delta$. Specifically, $f$ has the following guessing property: For each $\Sigma_2$-indestructibly correct cardinal $\lambda>\delta$ there is a normal fine $\delta$-complete ultrafilter $\mathcal{U}$ on $T_{\delta,\lambda}$ such that $j_{\mathcal{U}}(f)(\delta)=\lambda$. Such a function exist as per the argument in \cite[Theorem~3.10, p.10]{GoldbergOptimality}. Tsaprounis previously observed in \cite{Tsan} that extendible-like Laver functions, similar to those considered here, exist.
   
   \smallskip

   Let $\mathbb{P}$ be the Easton-supported product $\prod_{\alpha<\delta}\mathbb{Q}_\alpha$ where $\mathbb{Q}_\alpha$ is defined recursively follows: Suppose that $\langle \mathbb{Q}_\beta: \beta<\alpha\rangle$ has been defined and that $|\mathbb{Q}_\beta|<\delta$ for all $\beta<\alpha$. If $\alpha$ is an inaccessible cardinal with $f[\alpha]\s \alpha$, 
 then we take $\lambda_\alpha$ the least cardinal above $\max(f(\alpha), \sup_{\beta<\alpha}\lambda_\beta)$ and stipulate that $$\mathbb{Q}_\alpha:=\mathbb{NR}(S^{\lambda_\alpha^+}_\omega)\ast {\dot{\mathbb{CS}}(\lambda_\alpha^+\setminus\dot{S}_\alpha)\ast \dot{\mathrm{Code}}(\dot{S}_\alpha,\min(I_\alpha))},$$
   where $I_\alpha\s\delta$ is an interval of cardinals above $\lambda_\alpha^+$ with order-type $\lambda_\alpha^+$ such that $2^\theta=\theta^+$ for all $\theta\in I$. (This choice of $I_\alpha$ guarantees that forcing with $\mathrm{Code}(\dot{S}_\alpha,\min(I_\alpha))$ codes $\dot{S}_\alpha$ into the power-set function above $\min(I_\alpha)$.) 

   \smallskip

   In any other circumstances $\mathbb{Q}_\alpha$ is declared to be the trivial and $\lambda_\alpha:=0$. 

\smallskip

Note that $\{\mathbb{Q}_\alpha: \alpha<\delta\}\s V_\delta$. Since we are taking Easton supports this implies that $\mathbb{P}\s V_\delta.$

   \smallskip

   To streamline the forthcoming discussion  we shall denote:

   \begin{itemize}
       \item $\mathbb{NR}_{\alpha}:=\mathbb{NR}(S^{\lambda^+_\alpha}_\omega);$
       \item $\dot{\mathbb{CS}}_\alpha:=\mathbb{CS}(\lambda^+_\alpha\setminus \dot{S}_\alpha)$;
       \item $\dot{\mathrm{Code}}_\alpha:=\dot{\mathrm{Code}}(\dot{S}_\alpha,\min(I_\alpha))$.
   \end{itemize}

   In what follows $G\s \mathbb{P}$ is a fixed $V$-generic filter.

   \begin{claim}
      $\delta$ remains extendible in $V[G].$ 
   \end{claim}
   \begin{proof}
      We use Theorem~\ref{CharacterizingExtendibility}---that is, the Bagaria--Goldberg ultrafilter-based characterization of extendibility. Let $\lambda>\delta$ be a $\Sigma_2$-indestructibly  correct cardinal such that $\cf(\lambda)>\delta$. (Note that such a cardinal exist by our departing assumption.). By Solovay's theorem the SCH holds at $\lambda$, hence   $\lambda^{<\delta}=\lambda$. 
Let $\mathcal{U}$ be a normal fine $\delta$-complete ultrafilter on $T^*_{\delta,\lambda}$ such that $j_{\mathcal{U}}(f)(\delta)=\lambda$ where $j_{\mathcal{U}}\colon V\rightarrow M_\mathcal{U}$
denotes the corresponding ultrapower embedding.


       


Thus one obtains the following factoring:
$$\textstyle j_\mathcal{U}(\mathbb{P})\simeq \mathbb{P}\times {\mathbb{Q}}=\mathbb{P}\times  (\prod_{\delta\leq \alpha<j_{\mathcal{U}}(\delta)}\mathbb{Q}_\alpha)^{M_{\mathcal{U}}}.$$
Let us take a closer look at the tail forcing $\mathbb{Q}$. Since $\delta$ is inaccessible and a closure point of $j_{\mathcal{U}}(f)$ (i.e., $j_{\mathcal{U}}(f)[\delta]=f[\delta]\s \delta$) the definition of $j_{\mathcal{U}}(\mathbb{P})$ in $M_{\mathcal{U}}$ guarantees that $\mathbb{Q}_\delta$ is a non-trivial partial order. Thus, 
$$(\mathbb{Q}_\delta)^{M_{\mathcal{U}}}=(\mathbb{NR}_\delta\ast \dot{\mathbb{CS}}_\delta\ast \dot{\mathrm{Code}}_\delta)^{M_{\mathcal{U}}}.$$
By our choice, $(\lambda_\delta)^{M_\mathcal{U}}>j_{\mathcal{U}}(f)(\delta)=\lambda$, so that $(\mathbb{NR}_\delta\ast \dot{\mathbb{CS}}_\delta)^{M_{\mathcal{U}}}$ contains in $M_{\mathcal{U}}$ a dense $(\lambda^+)^{M_{\mathcal{U}}}$-directed-closed forcing, $\mathbb{D}_\delta$ (see Fact~\ref{fact: folklore}). Since $M_{\mathcal{U}}$ is closed under $\lambda$-sequences  it follows that $\mathbb{D}_\delta$ is  $\lambda^+$-directed-closed   in $V$.

The same argument shows that the tail forcing $\mathbb{Q}$ has a dense subset of conditions that is $\lambda^+$-directed-closed in $V$. The reason is that $(\lambda_\alpha)^{M_{\mathcal{U}}}>(\lambda_\delta)^{M_{\mathcal{U}}}>j_{\mathcal{U}}(f)(\delta)^+=\lambda^+$ for all $\alpha\in (\delta,j_{\mathcal{U}}(\delta))$ where $(\mathbb{Q}_\alpha)^{M_{\mathcal{U}}}$ is non-trivial. 

\smallskip





Notice that any $V$-generic for $$\textstyle \mathbb{C}:=\prod_{\delta\leq \alpha<j_{\mathcal{U}}(\delta)}\mathbb{D}_\alpha\ast (\dot{\mathrm{Code}}_ \alpha)^{M_{\mathcal{U}}}$$ induces (by density) a $V$-generic filter for $\mathbb{Q}$. Since $\mathbb{C}$ is a $\lambda^+$-directed-closed forcing in $V$ and there are at most $\lambda^{+}$-many dense open sets for it in $M_{\mathcal{U}}$\footnote{Here we use that $\lambda^{<\delta}=\lambda$, $2^\lambda=\lambda^+$ and the usual extender representation of dense open subsets  $D\s \mathbb{Q}$ in $M_{\mathcal{U}}$ (i.e.,  $D=j(f)(j``\lambda)$ for a function $f\colon \mathcal{P}_\delta(\lambda)\rightarrow\mathcal{P}(\mathbb{P})$).} we can construct (inside $V$) a $V$-generic filter for $\mathbb{C}$, hence also a $V$-generic filter $H\s \mathbb{Q}$. Moreover, since $|\mathbb{P}|<\lambda$, a mutual genericity argument shows that any $M_{\mathcal{U}}$-generic  for $\mathbb{Q}$ is in fact $M_{\mathcal{U}}[G]$-generic. Thereby  we obtain a $M_{\mathcal{U}}[G]$-generic filter $H\in V[G]$ for the poset $\mathbb{Q}$.

\smallskip

At this stage we are almost done with the preservation of extendibility. Thanks to the Easton support $j_{\mathcal{U}}`` G\s G\times H$, hence $j_{\mathcal{U}}\colon V\rightarrow M_\mathcal{U}$ lifts to
$$j_{\mathcal{U}}\colon V[G]\rightarrow M_\mathcal{U}[G\times H]=M_\mathcal{U}[H\times G].$$
Since $\lambda$ was $\Sigma_2$-indestructibly  correct in $M_\mathcal{U}$ it remains $\Sigma_2$-correct in $M_\mathcal{U}[H]$. Also, $G$ is a generic for a small forcing relative to $\lambda$, so the $\Sigma_2$-correctness of the cardinal is preserved in $M_\mathcal{U}[H\times G]$. To conclude define $$\mathcal{W}:=\{x\in \mathcal{P}_\delta(\lambda)^{V[G]}: j``\lambda\in j_{\mathcal{U}}(X)\}$$
which is easily shown to be a $2$-reflecting measure on $\mathcal{P}_\delta(\lambda)$.

This completes the verification that $\delta$ is extendible in $V[G]$.
   \end{proof}

Let us now argue that $\delta$ is the first strong compact cardinal in \(\HOD^{V[G]}\). 

\smallskip

Denote    
$\textstyle \pi \colon \mathbb{P}\rightarrow\prod_{\alpha<\delta} \mathbb{NR}_\alpha\;\;
\text{and}\;\;
\textstyle \pi_\alpha\colon \prod_{\alpha<\delta} \mathbb{NR}_\alpha\rightarrow \mathbb{NR}_\alpha$
 the natural projections. Let $G_0$ be the $V$-generic induced by $G$ and $\pi$, and $G_{0,\alpha}$ the $V$-generic induced by $\pi_{\alpha}$ and $G_0$, respectively.  The next claim is key:

 \begin{claim}
     \(\HOD^{V[G]}=V[G_0]\).
 \end{claim}
 \begin{proof}[Proof of claim]
    To prove the more difficult inclusion, that \(\HOD^{V[G]}\subseteq V[G_0]\), it suffices to show that in $V[G_0]$, $\mathbb{P}/G_0$ is cone homogeneous and definable from \(G_0\) and parameters in \(V\).
    It is tempting to say that the quotient forcing $\mathbb{P}/G_0$ is the Easton product $\prod_{\alpha<\delta}(\mathbb{CS}_{\alpha}\ast \dot{\mathrm{Code}}_{\alpha})_{G_{0,\alpha}}$ as computed in \(V[G_0]\), but we do not know whether this is true. 
    Instead, we show the following:
    \begin{lemma}
        Suppose \(\mathbb P\) is the Easton product \(\prod_{\alpha < \delta} \mathbb A_\alpha * \dot{\mathbb B}_\alpha\)
        and for each \(\alpha < \delta\), \(\mathbb A_\alpha\) forces that \(\dot{\mathbb B}_\alpha\) is cone homogeneous. Then if \(H\subseteq \prod_{\alpha < \delta}\mathbb A_\alpha\) is \(V\)-generic, \(\mathbb P/H\) is cone homogeneous.
        \begin{proof}
            Let \(H_\alpha\subseteq \mathbb A_\alpha\) be the generic filter induced by \(H\). Let 
            \(\mathbb B_\alpha = (\dot{\mathbb B}_\alpha)_{H_\alpha}\).
            Working in \(V[H]\), say a condition \((b_\alpha)_{\alpha < \delta}\in \prod_{\alpha < \delta}\mathbb B_\alpha\) is \textit{relevant} if there is an Easton-supported sequence
            \((\dot b_{\alpha})_{\alpha < \delta}\in (\prod_{\alpha < \delta} \dot{\mathbb B}_\alpha)^V\) such that
            \(b_\alpha = (\dot{b}_\alpha)_{H_\alpha}\) for each \(\alpha < \delta\). An easy calculation shows that
            the separative quotient of
            \(\mathbb P/H\) is isomorphic to the suborder \(\mathbb B\subseteq \prod_{\alpha < \delta} \mathbb B_\alpha\) consisting of relevant conditions.\footnote{
            Recall here that \(\mathbb P/H\) is equal to the set of conditions  \( (p_\alpha,\dot{q}_\alpha)_{\alpha < \delta}\in \mathbb P
            \) such that \((p_\alpha)_{\alpha < \delta}\in H\). The forcing equivalence of \(\mathbb P/H\) and \(\mathbb B\) is witnessed by the map $f : \mathbb P/H\to \mathbb B$ defined by
            \(f((p_\alpha,\dot{q}_\alpha)_{\alpha < \delta}) = ((\dot{q}_\alpha)_{G_\alpha})_{\alpha < \delta}
            \), and moreover, \(f\) is the separative quotient map.
            } Given a condition $b\in \mathbb{B}$ we denote by $\mathbb{B}_b$ the subposet of $\mathbb{B}$ whose conditions are all stronger than $b$.

            Suppose \((b_\alpha)_{\alpha < \delta}\)
            and \((c_\alpha)_{\alpha < \delta}\) are relevant conditions, and we will define, in \(V[H]\), relevant conditions \((b^*_\alpha)_{\alpha < \delta}\leq (b_\alpha)_{\alpha < \delta}\)
            and \((c^*_\alpha)_{\alpha < \delta}\leq (c_\alpha)_{\alpha < \delta}\) and
            an isomorphism \(\pi : \mathbb B_{b^*}\to \mathbb B_{c^*}\).
            Let \((\dot{b}_\alpha)_{\alpha < \delta}\)
            and \((\dot{c}_\alpha)_{\alpha < \delta}\) witness the relevance of \((b_\alpha)_{\alpha < \delta}\)
            and \((c_\alpha)_{\alpha < \delta}\). Working in \(V\), for each
            \(\alpha < \delta\) in the union of the supports of \((\dot{b}_\alpha)_{\alpha < \delta}\)
            and \((\dot{c}_\alpha)_{\alpha < \delta}\), fix \(\mathbb A_\alpha\)-names \(\dot{b}_\alpha^*\), 
            \(\dot{c}_\alpha^*\), and \(\dot{\pi}_\alpha\)
            such that \(\mathbb A_\alpha\) forces that
            \(\dot{b}_\alpha^* \leq \dot b_\alpha\),
            \(\dot{c}_\alpha^* \leq \dot c_\alpha\),
            and \(\dot{\pi}_\alpha^*\colon (\dot{\mathbb B}_\alpha)_{\dot b_\alpha^*}\to (\dot{\mathbb B}_\alpha)_{\dot c_\alpha^*}\) is an isomorphism.
            
            Let \(\pi_\alpha = (\dot{\pi}_\alpha)_{H_\alpha}\),
            \(b^*_\alpha = (\dot{b}_\alpha^*)_{H_\alpha}\), and
            \(c^*_\alpha = (\dot{c}_\alpha^*)_{H_\alpha}\).
            Note that \(b^*\) and \(c^*\) are relevant conditions.
            Define \(\pi : \mathbb B_{b^*}\to \mathbb B_{c^*}\) by
            \(\pi((u_\alpha)_{\alpha < \delta}) = (\pi_\alpha(u_\alpha))_{\alpha < \delta}\). Then \(\pi\) is the desired isomorphism. 
        \end{proof}
    \end{lemma}
    
    Clearly, \((\mathbb{CS}_{\alpha}\ast \dot{\mathrm{Code}}_{\alpha})_{G_{0,\alpha}}\) is a cone homogeneous forcing in $V[G_{0,\alpha}]$, since in \(V[G_{0,\alpha}]\), finite iterations of $\text{OD}_{G_{0,\alpha}}$ cone homogeneous forcings are cone homogeneous; this is a relativized version of \cite[Lemma~8.3]{BenHay}, which in turn follows from results of \cite{DobFri}. Therefore we can apply the lemma above to obtain that the quotient \(\mathbb P/G_0\) is cone homogeneous. It follows that $$\HOD^{V[G]}\s V[G_0].$$
    
 \smallskip
 
 Furthermore, we claim that $G_0\in \HOD^{V[G]}$ and so 
 $$\HOD^{V[G]}=V[G_0].$$
This is because \(G\) was constructed so as to code all the non-reflecting stationary sets into the continuum function. 
Specifically, $G_0$ is ordinal definable in $V[G]$ taking $\prod_{\alpha<\delta}\mathbb{NR}_\alpha$ and $\langle \lambda_\alpha: \alpha<\delta\rangle$ as parameters.
Since $V\models ``V=\mathrm{gHOD}$'' (in fact, $V$ satisfies Reitz's \emph{Continuum Coding Axiom} \cite{Reitz}, which implies $V=g\HOD$) both of these parameters belong to $\HOD^{V[G]}$
 which yields $G_0\in \HOD^{V[G]}$. 
 \end{proof}

The $\HOD$ hypothesis holds in $V[G]$ because $\HOD^{V[G]}$ is a generic extension of $\HOD^V$ by a set-size forcing. As a result, $\HOD^{V[G]}$ is a weak extender model for the supercompactness of $\delta$  and therefore $\delta$ is supercompact in this model. Finally,  $\delta$ must be the first strongly compact cardinal in $\HOD^{V[G]}$ precisely because $\HOD^{V[G]}=V[G_0]$.
\end{proof}

\begin{remark}
 Employing standard techniques one can also arrange the \emph{mirror image configuration} of  Theorem~\ref{thm:HOD_thm}. Namely, one can prove the consistency (with the HOD hypothesis) of the first strongly compact cardinal being extendible in HOD. The idea is simple: Start with a model for $V=\mathrm{gHOD}$ accomodating an extendible cardinal $\delta$ (see \cite{BagPov}). Force with a \emph{Magidor product} of Prikry forcings as in \cite{HMP}. This forcing preserves the strong compactness of $\delta$ 
and it is not hard to check that it is cone homogeneous. As a result, in the generic extension $V[G]$, $\delta$ becomes the first strongly compact while remains extendible (non-necessarily the first one!) in $\HOD^{V[G]}$.
\end{remark}

\begin{remark}

Note that the GCH fails in the model $V[G]$ from the previous construction, which raises the question of whether this failure is necessary. The answer is no. One can achieve the configuration of Theorem~\ref{thm:HOD_thm} while preserving the GCH as follows: Start with a model of GCH and prepare it so that every inaccessible $\Sigma_2$-correct cardinal $\lambda$ becomes  indestructible under $\lambda$-directed-closed forcing that preserve the GCH. Lemma~\ref{GCHplussigma2correctness} below shows that this preparation is feasible, and that it preserves both the GCH and extendible cardinals. In the resulting model, instead of coding the stationary sets $S_\alpha$ into the power-set-function pattern, use $\diamondsuit^*$ as in \cite{Brooke-Taylor} for the coding. The charm of this alternate coding is that it preserves the GCH. Finally force with the modified Easton support product $\mathbb{P}$.
\end{remark}
\begin{lemma}[GCH-preserving preparation]\label{GCHplussigma2correctness}
  There is a class forcing $\mathbb{Q}$ such that every  $\Sigma_2$-correct cardinal  becomes  $\Sigma_2$-indestructibly correct under posets that preserve the $\mathrm{GCH}$. Moreover, this forcing preserves extendible cardinals and if the $\mathrm{GCH}$ holds in $V$, then so does it in $V^{\mathbb{Q}}$.
\end{lemma}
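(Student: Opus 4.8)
The plan is to realize $\mathbb{Q}$ as an Easton-supported class iteration that is the $\gch$-preserving analogue of the indestructibility preparation of \cite{GoldbergOptimality}. In that preparation one iterates, guided by a bookkeeping function, forcings that seal the $\Sigma_2$-correctness of each $\Sigma_2$-correct cardinal; the sealing is performed by a coding which, in its standard presentation, pins the universe down through the continuum-function pattern and hence destroys $\gch$. Following the suggestion recorded in the preceding remark, I would replace every such continuum-coding iterand by the $\diamondsuit^*$-coding of Brooke-Taylor \cite{Brooke-Taylor}, which records the same information into $\diamondsuit^*$-sequences on successor cardinals while leaving the power-set function untouched. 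Each iterand is then $\alpha$-directed-closed and $\gch$-preserving, and the Easton bookkeeping keeps the tail of $\mathbb{Q}$ above any stage highly closed.

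The cleanest part of the verification is reducing indestructibility of $\Sigma_2$-correctness to a single robustness property. By L\'evy's reflection lemma, $\lambda \in C^{(2)}$ is equivalent to $\lambda$ being a $\beth$-fixed point together with the demand that every $\Sigma_2$ fact of the form $\exists\theta\,(H_\theta \models \psi(a))$, with $a \in H_\lambda$ and true in $V$, already be witnessed by some $\theta < \lambda$. Now let $\mathbb{P}$ be $\lambda$-directed-closed and $\gch$-preserving in $V^{\mathbb{Q}}$. Since $\mathbb{P}$ adds no bounded subsets of $\lambda$, it leaves $V_\lambda = H_\lambda$ and all witnesses below $\lambda$ fixed, and $\lambda$ stays a $\beth$-fixed point; consequently the \emph{only} way correctness could fail is if $\mathbb{P}$ created a brand-new global $\Sigma_2$-truth with a parameter in $H_\lambda$. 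The role of the preparation is precisely to forbid this: the $\diamondsuit^*$-coding pre-realizes, below each $\lambda$, all $\Sigma_2$-configurations over $H_\lambda$-parameters attainable by $\gch$-preserving $\lambda$-directed-closed forcing, so that no genuinely new such truth can appear and $\lambda \in C^{(2)}$ persists in $V^{\mathbb{Q}}[\mathbb{P}]$.

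For $\gch$-preservation I would argue cardinal by cardinal: factor $\mathbb{Q}$ at a cardinal $\mu$ as $\mathbb{Q}\restriction\mu^+ * \dot{\mathbb{Q}}_{\mathrm{tail}}$ with the tail $\mu^+$-directed-closed, so that $2^\mu$ is computed in the intermediate extension, and there combine the $\gch$-preservation of each iterand (guaranteed by the choice of $\diamondsuit^*$-coding) with a standard nice-name count under the ground-model $\gch$. Preservation of extendibility then follows the same lifting scheme as in the proof of Theorem~\ref{thm:HOD_thm}: fix an extendible Laver function and, for a target $\lambda$, a reflecting measure on $T^*_{\delta,\lambda}$ as in Theorem~\ref{CharacterizingExtendibility} whose ultrapower $j$ guesses the relevant stage; one lifts $j$ through $\mathbb{Q}$ by building a generic for the image tail out of its closure, using $\gch$ to bound the number of dense sets that must be met, and reads off a reflecting measure in $V^{\mathbb{Q}}$.

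I expect the main obstacle to be the tension between indestructibility and $\gch$-preservation. The usual Laver-style sealing of correctness is exactly what blows up the continuum, so the whole point is to show that the $\diamondsuit^*$-coding is simultaneously (i) strong enough to pre-realize every forceable small-parameter $\Sigma_2$-configuration, thereby blocking the creation of new $\Sigma_2$-truths, and (ii) weak enough — closed and $\gch$-respecting — to be absorbed by the image tail when lifting the extendibility embeddings. Reconciling these two demands, and checking that the class iteration is well-defined and tame enough for the embeddings to lift, is where the real work lies.
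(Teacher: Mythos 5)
There is a genuine gap, and it lies exactly where you place the ``real work'': the mechanism you propose for sealing $\Sigma_2$-correctness is the wrong tool. The preparation in \cite{GoldbergOptimality} (and in the paper's proof of this lemma) is \emph{not} an iteration of coding forcings that you can swap out for the $\diamondsuit^*$-coding of \cite{Brooke-Taylor}. It is a lottery-style Easton class iteration: one first defines, for each $\eta$, a bound $f(\eta)$ on the ranks of witnesses to any $\Sigma_2$ statement about a parameter in $V_\eta$ that is forceable by a GCH-preserving $\eta$-directed-closed poset, and then at stage $\alpha$ one forces with the \emph{lottery sum} of all GCH-preserving $\eta_\alpha$-directed-closed forcings in $V_{f(\eta_\alpha)}$ (as computed in $V^{\mathbb{Q}_\alpha}$). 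It is this anticipation of every candidate forcing -- not any coding -- that ``pre-realizes every forceable small-parameter $\Sigma_2$-configuration'': if some GCH-preserving $\lambda$-directed-closed $\mathbb{P}$ over $V^{\mathbb{Q}}$ forces a new $\Sigma_2$ truth, the lottery below $\lambda$ already had the opportunity to realize (a reflected instance of) it, so correctness of $\lambda$ survives. A coding forcing, whether via the continuum function or via $\diamondsuit^*$, only records a \emph{given} set into a definable pattern; it has no capacity to absorb or anticipate arbitrary directed-closed forcings, so your claim (i) about the $\diamondsuit^*$-coding cannot be made to work, and with it the core of the indestructibility argument collapses.

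The source of the confusion is the remark preceding the lemma: there, Brooke-Taylor's $\diamondsuit^*$-coding is invoked for a \emph{different} purpose, namely to replace the posets $\mathrm{Code}(\dot S_\alpha,\min(I_\alpha))$ that code the generic stationary sets into $\HOD$ in the main construction of Theorem~\ref{thm:HOD_thm}; it is not an ingredient of the preparation $\mathbb{Q}$ at all. The parts of your proposal that do align with the paper are peripheral: the reduction of indestructible correctness to ``no new $\Sigma_2$ truths with parameters in $H_\lambda$'' is the right frame, and your GCH-preservation argument (factor $\mathbb{Q}$ at a cardinal, use closure of the tail, then nice-name counting under the ground-model GCH together with GCH-preservation of each iterand) is essentially the paper's claim verbatim. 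But to repair the proof you must replace the coding iterands by lottery sums of all GCH-preserving sufficiently-directed-closed posets of bounded rank, after which indestructibility and preservation of extendibles follow by the arguments of \cite[Theorem~3.8, Proposition~3.9]{GoldbergOptimality}, which is precisely what the paper does.
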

\begin{proof}
    The proof is a variation of the argument preceding \cite[Theorem~3.7]{GoldbergOptimality} aimed at guaranteeing that the GCH is preserved. For an ordinal $\eta$, a set $y$ of rank at most $\eta$ and a $\Sigma_2$ formula in the language of set theory $\varphi(x)$, we denote by $\Phi(\eta,y,\varphi)$ the sentence  ``There is an $\eta$-directed-closed forcing $\mathbb{P}$ that preserves the GCH  such that for some $\beta\in \ord$ and $p\in \mathbb{P}$, $p\forces_{\mathbb{P}}V[\dot{G}]_\beta\models \varphi(\check{y})$''.    If $\Phi(\eta, y,\varphi)$ holds we let $F(\eta, y, \varphi)$ be the first $\beth$-fixed point $\beta$ such that $\Phi(\eta, y,\varphi)$ is witnessed by $\beta$ and some $\mathbb{P}\in V_\beta$. Otherwise, if $\Phi(\eta, y,\varphi)$ fails we declare $F(\eta, y, \varphi):=0$. Set $$\textstyle f(\eta):=\sup_{y\in V_\eta, \varphi\in \Sigma_2}F(\eta,y,\varphi).$$

\smallskip

   We define an Easton-supported class iteration $$\mathbb{Q}:=\varinjlim\langle \mathbb{Q}_\alpha, \dot{\mathbb{P}}_\alpha: \alpha\in \ord\rangle$$
   together with a countinuous increasing sequence of ordinals $\langle \eta_\alpha: \alpha\in \ord\rangle$ as follows: Suppose that  both $\mathbb{Q}_\alpha$ and $\eta_\alpha$ have been defined. Then we let $\dot{\mathbb{P}}_\alpha$ be a $\mathbb{Q}_\alpha$-name for {the lottery sum} (see \cite[\S3]{HamLot})  of all the GCH-preserving $\eta_\alpha$-directed-closed forcings in $V_{f(\eta_\alpha)}$ (this computation takes place in $V^{\mathbb{Q}_\alpha}$).

   \smallskip

   The argument of \cite[Theorem~3.8]{GoldbergOptimality} shows that forcing with $\mathbb{Q}$ makes every $\Sigma_2$-correct cardinal of $V$ become indestructibly $\Sigma_2$-correct under posets preserving the GCH. The same conclusion applies to the preservation of extendible cardinal from the ground model $V$ \cite[Proposition~3.9]{GoldbergOptimality}.  

   \begin{claim}
       If the $\mathrm{GCH}$ holds in $V$ then so does in $V^{\mathbb{Q}}$.
   \end{claim}
   \begin{proof}[Proof of claim]
       Let $G\s \mathbb{Q}$  a $V$-generic filter and fix $\lambda$ a cardinal in $V[G]$. Since $\langle \eta_\alpha: \alpha\in \ord\rangle$ is an increasing continuous sequence  there is an ordinal $\alpha$ such that $\eta_\alpha\leq \lambda<\eta_{\alpha+1}$. By the closure properties of  $\mathbb{Q}/G_{\alpha+1}$, $$\mathcal{P}(\lambda)^{V[G]}=\mathcal{P}(\lambda)^{V[G_{\alpha+1}]}.$$
       By the construction of $\mathbb{Q}$ it follows that $|\mathbb{Q}_\alpha|\leq \eta_\alpha$. Standard   nice name arguments together with $``V\models\mathrm{GCH}"$  yield $V[G\restriction\alpha]\models 2^\lambda=\lambda^+$. By assumption, $(\dot{\mathbb{P}}_\alpha)_{G_\alpha}$ preserves the GCH so $V[G]\models 2^\lambda=\lambda^+$. 
   \end{proof}
   The above finishes the proof of the proposition.
\end{proof}





Let us conclude this section with an interesting corollary of Theorem~\ref{thm:HOD_thm}.  
\begin{cor}\label{cor: identity crises}
The following are consistent with the $\HOD$ hypothesis:
\begin{enumerate}
    \item The first extendible is the first $\HOD$-supercompact.
    \item $C^{(1)}$-supercompact cardinals may not be strongly compact in $\HOD$.
\end{enumerate}

\end{cor}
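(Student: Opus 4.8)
The plan is to extract both clauses from the model $V[G]$ built in Theorem~\ref{thm:HOD_thm}, where the $\HOD$ hypothesis holds, $\delta$ is extendible, and $\delta$ is the least strongly compact cardinal of $\HOD^{V[G]}$. For clause~(1) I would start from the fact that every extendible cardinal is $\HOD$-supercompact \cite[\S7]{WooPartI}, so $\delta$ is $\HOD$-supercompact in $V[G]$; taking $V$ so that $\delta$ is the least extendible (a feature the forcing $\mathbb{P}\s V_\delta$ does not create below $\delta$), $\delta$ remains the least extendible of $V[G]$. It then suffices to rule out $\HOD$-supercompacts below $\delta$. If some $\kappa<\delta$ were $\HOD$-supercompact in $V[G]$, then, since the $\HOD$ hypothesis holds there, Theorem~\ref{thm:WoodinHODsupercompact} would make $\HOD^{V[G]}$ a weak extender model for the supercompactness of $\kappa$; in particular $\kappa$ would be strongly compact in $\HOD^{V[G]}$, contradicting that $\delta$ is its \emph{least} strongly compact. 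Hence $\delta$ is simultaneously the least extendible and the least $\HOD$-supercompact, which is clause~(1).

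For clause~(2) the governing observation is that any $C^{(1)}$-supercompact cardinal lying strictly below the least strongly compact of $\HOD$ is automatically not strongly compact there; so the goal is to place such a cardinal below $\delta$. Concretely, I would run the construction of Theorem~\ref{thm:HOD_thm} from a ground model $V$ in which, besides the two standing hypotheses used there, $\delta$ is $C^{(2)}$-extendible and some $\kappa<\delta$ is $C^{(1)}$-supercompact. Such a $\kappa$ exists because a $C^{(2)}$-extendible cardinal is a limit of $C^{(1)}$-extendibles, which are themselves $C^{(1)}$-supercompact \cite{Bag}; moreover one may first prepare $\kappa$ to be indestructible under $\kappa$-directed-closed forcing via a Laver-style iteration for $C^{(n)}$-supercompactness \cite{PovAxiomA}, by set forcing of size ${<}\delta$ that preserves the extendibility of $\delta$ and the other hypotheses.

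It remains to check that $\kappa$ survives as $C^{(1)}$-supercompact in $V[G]$ and then to conclude. Splitting $\mathbb{P}$ according to the location $\lambda_\alpha$ of its factors, the part living at or above $\kappa$ is $\kappa^+$-directed-closed (each nontrivial $\mathbb{Q}_\alpha$ occurring there is $\lambda_\alpha^+$-directed-closed with $\lambda_\alpha\geq\kappa$), while the part living below $\kappa$ is an Easton product inside $V_\kappa$ of increasingly closed forcings; by the chosen indestructibility of $\kappa$, and using the ``tallness'' characterization of $C^{(1)}$-supercompactness in Theorem~\ref{CharacterizingCnsupercompacts} to lift the witnessing embeddings, $\kappa$ remains $C^{(1)}$-supercompact in $V[G]$. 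Since $\delta$ is the least strongly compact of $\HOD^{V[G]}$ and $\kappa<\delta$, the cardinal $\kappa$ witnesses clause~(2). I expect the main obstacle to be exactly this preservation step: one must secure the indestructibly $C^{(1)}$-supercompact $\kappa<\delta$ \emph{simultaneously} with the hypotheses of Theorem~\ref{thm:HOD_thm}, and verify that both the low and the high parts of $\mathbb{P}$ are absorbed by the preparation of $\kappa$.
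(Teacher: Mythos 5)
Your clause~(1) argument is correct and is essentially the paper's own: $\delta$ is $\HOD^{V[G]}$-supercompact because it is extendible \cite[\S7]{WooPartI}, and no $\kappa<\delta$ can be $\HOD^{V[G]}$-supercompact, since by Theorem~\ref{thm:WoodinHODsupercompact} (using the $\HOD$ hypothesis in $V[G]$) such a $\kappa$ would be supercompact, hence strongly compact, in $\HOD^{V[G]}$, contradicting that $\delta$ is the least strongly compact cardinal there. Your preliminary step of arranging $\delta$ to be the least extendible of $V$ is unnecessary: since every extendible cardinal is $\HOD$-supercompact, the same argument already rules out extendibles below $\delta$ in $V[G]$; moreover, your parenthetical claim that $\mathbb{P}$ ``does not create'' extendibles below $\delta$ would itself require proof, as $\mathbb{P}$ is not small relative to the cardinals below $\delta$.

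Clause~(2) is where you diverge, and your route has genuine gaps. The missing idea is a fact the paper quotes from \cite{PovOmega}: every extendible cardinal is a limit of $C^{(1)}$-supercompact cardinals. Since Theorem~\ref{thm:HOD_thm} already shows that $\delta$ remains extendible in $V[G]$, this immediately yields---with no preservation argument whatsoever---unboundedly many $C^{(1)}$-supercompact cardinals of $V[G]$ below $\delta$, none of which can be strongly compact in $\HOD^{V[G]}$, because $\delta$ is the least strongly compact cardinal of that model. Your plan instead manufactures a single $C^{(1)}$-supercompact $\kappa<\delta$ by preparation plus preservation, and this rests on two unjustified steps. First, you invoke a ``Laver-style'' indestructibility of $C^{(1)}$-supercompactness under $\kappa$-directed-closed forcing; this is not a standard result and does not follow from Laver's argument: in the usual lifting, the targets $j(\kappa)$ of the witnessing embeddings are in general not $\beth$-fixed points of $V$, so the clause $j(\kappa)\in C^{(1)}$ is precisely what fails, and neither \cite{PovAxiomA} nor anything else cited in the paper supplies such an indestructibility theorem. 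Second, even granting it, indestructibility under $\kappa$-directed-closed forcing says nothing about the factors of $\mathbb{P}$ located below $\kappa$: these are not $\kappa$-directed-closed, they are not small forcing, and for typical $\kappa$ (e.g.\ inaccessible closure points of $f$) they form an Easton product unbounded in $V_\kappa$; handling them would require repeating at $\kappa$ essentially the entire lifting argument of Theorem~\ref{thm:HOD_thm}, now with the extra burden of keeping the target inside $C^{(1)}$ of the extension. Finally, your approach consumes a $C^{(2)}$-extendible cardinal in the ground model, so even if completed it would establish a weaker consistency result than the paper's, which obtains the corollary from a single extendible cardinal.
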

\begin{proof}
Let $V[G]$ be the model constructed in Theorem~\ref{thm:HOD_thm}.

(1) Since $\delta$ is extendible in $V[G]$ it is $\HOD^{V[G]}$-supercompact by \cite[Lemma 188]{WooPartI}. Note that there are no $\HOD^{V[G]}$-supercompact cardinals $\kappa<\delta$ as those $\kappa$ would be supercompact in $\HOD$, (here we used the HOD hypothesis  in $V[G]$ and  Theorem~\ref{thm:WoodinHODsupercompact})  but $\delta$ is the first supercompact in $\HOD$.

\smallskip

(2) Poveda showed in \cite{PovOmega} that every extendible cardinal is a limit of $C^{(1)}$-supercompacts. Since the first extendible in the model of Theorem~\ref{thm:HOD_thm} is the first strongly compact in HOD, many $C^{(1)}$-supercompact cardinals (in $V[G]$) are not strongly compacts in $\HOD$.
\end{proof}

\subsection{Transferring large cardinals down to HOD}\label{SectionTransferring}

Woodin showed in \cite{WooPartI} that if $\delta$ is an extendible cardinal and the $\HOD$ hypothesis holds then $\delta$ is supercompact in $\HOD$. The next theorem offers an improvement of this fact by showing that $\delta$ is $C^{(1)}$-supercompact in $\HOD$.

\begin{theorem}\label{HODabsorbsC(1)supercompacts}
Under the $\HOD$ hypothesis, every extendible cardinal is  $C^{(1)}$-supercompact in $\HOD$. 
\end{theorem}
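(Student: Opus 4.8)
The plan is to verify, working inside \(\HOD\), Poveda's tallness characterization of \(C^{(1)}\)-supercompactness (Theorem~\ref{CharacterizingCnsupercompacts}). Fix an extendible \(\delta\). Since every extendible cardinal is \(\HOD\)-supercompact, the \(\HOD\) hypothesis and Theorem~\ref{thm: WoodinCoveringApprx} give that \(\HOD\) is a weak extender model for the supercompactness of \(\delta\) —so \(\delta\) is supercompact in \(\HOD\)— and that \(\HOD\) has the \(\delta\)-cover and \(\delta\)-approximation properties. By Theorem~\ref{CharacterizingCnsupercompacts} it then suffices to produce, for each \(\HOD\)-regular \(\lambda>\delta\), an embedding \(i\colon\HOD\to N\) definable in \(\HOD\) with \(\crit(i)=\delta\), \(N^{<\delta}\subseteq N\) (computed in \(\HOD\)), \(\cf(i(\delta))>\lambda\) and \(i(\delta)\in (C^{(1)})^{\HOD}\).

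Two soft transfer facts let me read off the last two requirements from data living in \(V\). First, since \(\HOD\subseteq V\), any \(\beth\)-fixed point \(\theta\) of \(V\) remains one in \(\HOD\): such a \(\theta\) is a \(V\)-cardinal and a limit cardinal of \(\HOD\), and for \(\alpha<\theta\) one has \(|\beth^{\HOD}_\alpha|^V=|(V_{\omega+\alpha})^{\HOD}|^V\le\beth^V_\alpha<\theta\), so \(\beth^{\HOD}_\alpha<\theta\) and hence \(\beth^{\HOD}_\theta=\theta\). Thus \((C^{(1)})^V\subseteq(C^{(1)})^{\HOD}\). Second, cofinalities can only increase when passing to \(\HOD\), so \(\cf^{\HOD}(\theta)\ge\cf^V(\theta)\) for all \(\theta\).

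To manufacture \(i\) I would start in \(V\) with an embedding witnessing both flavours of supercompactness at once. Because \(\delta\) is extendible it is \(C^{(1)}\)-supercompact and \(\HOD\)-supercompact in \(V\); fixing a sufficiently large regular \(\lambda'>\lambda\), choose \(j\colon V\to M\) with \(\crit(j)=\delta\), \(M^{\lambda'}\subseteq M\), \(j(\delta)\in (C^{(1)})^V\), \(\cf(j(\delta))>\lambda\), and carrying the \(\HOD\)-agreement \(\HOD^M\cap V_{\lambda'}=\HOD\cap V_{\lambda'}\) coming from \(\HOD\)-supercompactness. Let \(E\) be the \((\delta,\eta)\)-extender on \(\HOD\) derived from \(j\), with \(j(\delta)<\eta<\lambda'\), let \(j_E\colon\HOD\to N_E\) be its ultrapower, and let \(k\colon N_E\to\HOD^M\) be the factor map, so \(k\circ j_E=j\restriction\HOD\) and \(\crit(k)\ge\eta\). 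Since \(\crit(k)>j(\delta)\) we get \(j_E(\delta)=j(\delta)\), and since \(k\) fixes \(\eta^{<\omega}\) pointwise, \(j_E(A)\cap\eta^{<\omega}=j(A)\cap\eta^{<\omega}\) for every \(A\in\HOD\).

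The heart of the argument is checking clause (1) of the Universality Theorem~I (Theorem~\ref{UniversalityI}) for \(E\). For \(A\in\HOD\cap\mathcal P(\eta^{<\omega})\) the set \(j(A)\cap\eta^{<\omega}\) is definable in \(\HOD^M\) from \(j(A)=(j\restriction\HOD)(A)\) and \(\eta\), hence lies in \(\HOD^M\); as it has rank below \(\lambda'\), the agreement \(\HOD^M\cap V_{\lambda'}=\HOD\cap V_{\lambda'}\) places it in \(\HOD\). So clause (1) holds and Theorem~\ref{UniversalityI} gives \(E\in\HOD\). Consequently \(i:=j_E\) and \(N:=N_E=\Ult(\HOD,E)\) are definable in \(\HOD\); the extender \(E\) is \(\delta\)-complete, so \(N^{<\delta}\subseteq N\) in \(\HOD\), while \(i(\delta)=j(\delta)\in(C^{(1)})^V\subseteq(C^{(1)})^{\HOD}\) and \(\cf^{\HOD}(i(\delta))\ge\cf^V(j(\delta))>\lambda\) by the transfer facts. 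This is exactly the witness demanded by Theorem~\ref{CharacterizingCnsupercompacts}, so \(\delta\) is \(C^{(1)}\)-supercompact in \(\HOD\). I expect the main obstacle to be arranging a single \(V\)-embedding that is simultaneously a \(C^{(1)}\)-supercompactness embedding and carries the \(\HOD\)-agreement \(\HOD^M\cap V_{\lambda'}=\HOD\cap V_{\lambda'}\) needed to verify clause (1); this is precisely the point where the weak extender model property of \(\HOD\) (rather than mere \(\delta\)-cover and approximation) is essential.
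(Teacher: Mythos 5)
Your skeleton is the same as the paper's (Poveda's tallness characterization, Woodin's theorem to get supercompactness of \(\delta\) in \(\HOD\), a derived \(\HOD\)-extender, and Universality Theorem~I), and your two transfer facts are fine; but there are two genuine gaps, and each sits exactly where the paper has to work hardest. First, the embedding you start from is not provided by any cited result, and this cannot be patched by choosing parameters more carefully: \(\HOD\)-supercompactness yields \(\HOD^M\cap V_{\lambda'}=\HOD\cap V_{\lambda'}\) only for \(\lambda'<j(\delta)\) (in Woodin's definition the agreement always stops strictly below \(j(\delta)\)), and Tsaprounis-type embeddings with \(V_{j(\delta)}\subseteq M\) likewise give only \(\HOD^M\cap V_{j(\delta)}=\HOD^{V_{j(\delta)}}\subseteq \HOD\). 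You, however, need the inclusion \(\HOD^M\cap V_{\lambda'}\subseteq\HOD\) for some \(\lambda'>\eta>j(\delta)\), because the set \(j(A)\cap\eta^{<\omega}\) whose membership in \(\HOD\) you must certify has rank at least \(\eta\); even taking \(\eta=j(\delta)\) its rank is at least \(j(\delta)\), i.e., just above where any available agreement holds. This is precisely why the paper does not argue the way you do: it checks instead that each small piece \(j_E(A)\cap j(\delta)^{<\omega}\cap\tau\), for \(\tau\in\HOD\) with \(|\tau|^{\HOD}<j(\delta)\), has rank below \(j(\delta)\) and so lands in \(\HOD^M\cap V_{j(\delta)}\subseteq\HOD\), and then invokes the \(j(\delta)\)-approximation property of \(\HOD\) (inherited from \(\delta\)-approximation, using regularity of \(j(\delta)\)) to assemble the whole set into \(\HOD\). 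The approximation property, which your argument never uses, is the missing idea, not a dispensable technicality.

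Second, the claim that ``\(E\) is \(\delta\)-complete, so \(N^{<\delta}\subseteq N\) in \(\HOD\)'' is unjustified and false in general. An extender ultrapower is a direct limit over finite supports: \(\delta\)-completeness closes each \(\Ult(\HOD,E_a)\) under \({<}\delta\)-sequences, but not the limit. A sequence \(\langle [a_i,f_i]_E : i<\gamma\rangle\in\HOD\) with \(\gamma<\delta\) belongs to \(N\) essentially if and only if \(\langle a_i : i<\gamma\rangle\in N\); so the closure you need amounts to \(\mathcal{P}_\delta(j(\delta))\cap\HOD\subseteq \Ult(\HOD,E)\), a covering-type property of \(E\) over \(\HOD\) that requires proof. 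The paper devotes a separate claim to exactly this point: it uses the \(\gamma\)-cover property of \(\HOD\) for all sufficiently large strong limit \(\gamma<\delta\) (a consequence of Goldberg's HOD dichotomy for strongly compact cardinals), transfers it by elementarity of \(j_{E^*}\) to \(j_{E^*}(\HOD)\) inside the \({<}\delta\)-closed \(\Ult(V,E^*)\), uses the agreement \(\HOD\cap V_\delta=j_{E^*}(\HOD)\cap V_\delta\) to pull covers back into \(\HOD\), and only then concludes closure from \(\cf(j_E(\delta))\geq\delta\). Your proposal has no substitute for either of these two arguments, so as written it does not go through.
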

\begin{proof}
Let $\delta$ be an extendible cardinal. The interesting case is when $\delta$ is the first extendible: In any other case, Hamkins Universality Theorem~\ref{UniversalityII} implies that $\delta$ is extendible in $\HOD$, and by \cite{PovOmega} $C^{(1)}$-supercompact.

So going forward we assume that $\delta$ is the first extendible. We would like to employ Poveda's characterization of $C^{(1)}$-supercompactness (Theorem~\ref{CharacterizingCnsupercompacts}) to show that $\delta$ is $C^{(1)}$-supercompact. First, note that that $\delta$ is supercompact in $\HOD$, by Woodin's theorem. (This yields Clause~(1) of Theorem~\ref{CharacterizingCnsupercompacts}.)

\smallskip

To verify Clause~(2) of Theorem~\ref{CharacterizingCnsupercompacts} we argue as follows. 
 Let $j\colon V\rightarrow M$ be an extender embedding with $\crit(j)=\delta$, $M^\delta\s M$, $V_{j(\delta)}\s M$ and $\lambda<j(\delta)$  being inaccessible. (This embedding exists by  Tsaprounis' theorem \cite{Tsan}.) 

   Note that 
   \begin{equation}\label{eqhod}
      \tag{$\ast$}  \HOD^M\cap V_{j(\delta)}=\HOD^{V_{j(\delta)}}\s \HOD.
   \end{equation}
 The first equality follows from $\Sigma_2$-correcteness of $j(\delta)$ in $M$ and  $V_{j(\delta)}\s M$. 
 
 Let $E^*$ be the extender of length $j(\delta)$ inducing $j\colon V\rightarrow M$. Denote by $E$ the $\HOD$-extender of length $j(\delta)$ derived from $j$. Clearly, 
  $\crit(E)=\delta$ and $j_E(\delta)=j(\delta)$ is $\HOD$-inaccessible. So we are left with showing two things:
   \begin{enumerate}[label=(\alph*)]
       \item  $E\in \HOD$;
       \item  $\Ult(\HOD,E)$ is closed under ${<}\delta$-sequences in $\HOD$.
   \end{enumerate}

   \begin{claim}
       Clause~(a) holds.
   \end{claim}
   \begin{proof}
   We will use Woodin's Universality Theorem~\ref{UniversalityI}. This says that  $E\in \HOD$ is equivalent to saying  that for all sets $A\in \HOD\cap \mathcal{P}(j(\delta)^{<\omega})$, $$\text{$j_{E}(A)\cap j(\delta)^{<\omega}\in \HOD.$}$$
    By equation \eqref{eqhod} above, for each $\tau\in \HOD$ with $|\tau|^{\HOD}<j(\delta)$, $$j_{E}(A)\cap j(\delta)^{<\omega}\cap \tau\in \HOD^M\cap V_{j(\delta)}\s \HOD.$$ Since $\HOD$ has the $\delta$-cover and $\delta$-approximation properties (by Theorem~\ref{thm: WoodinCoveringApprx}) it also has the  $j(\delta)$-cover and $j(\delta)$-approximation properties. (For this one uses the fact that $j(\delta)$ is regular.) Therefore, $j_E(A)\cap j(\delta)^{<\omega}\in \HOD$. 
    \end{proof}

    \begin{claim}
        Clause~(b) holds.
    \end{claim}
    \begin{proof}[Proof of claim]
        We use that $\HOD$ has the
    \(\gamma\)-cover property for all sufficiently large strong limit cardinals $\gamma < \delta$. This follows from Goldberg's version of the \(\HOD\) dichotomy theorem for strongly compact cardinals \cite{Gol}. Using this observation,
    the elementarity of $j_{E^*}$, and the fact that $\crit(j_{E^*}) = \delta$, we obtain that
    $j_{E^*}(\HOD)$ has the
    \(\gamma\)-cover property  in \(\Ult(V,E^*)\)   for all sufficiently large strong limit cardinals $\gamma < \delta$. Since \(\Ult(V,E^*)\) is closed under \({<}\delta\)-sequences,
    this means that \(j_{E^*}(\HOD)\) has the \(\delta\)-cover property. 
    
Suppose that $\sigma\in \mathcal{P}_\delta(j_{E^*}(\delta))\cap \HOD$, and we will show that \(\sigma \in j_{E^*}(\HOD)\).
    Let \(\tau\in \mathcal{P}_\delta(j_{E^*}(\delta))\cap j_{E^*}(\HOD)\) cover \(\sigma\). Since \(E\in \HOD\) and 
    $j_{E^*}(\HOD) = 
    \Ult(\HOD, E)$, we have $j_{E^*}(\HOD)\s \HOD$ and therefore \(\tau\in \HOD\). Since
    \(\HOD\cap V_\delta = j_{E^*}(\HOD)\cap V_\delta\), for any set $\pi\in \HOD\cap j_{E^*}(\HOD)$ with $|\pi| < \delta$, \(\mathcal P(\pi)\cap \HOD = \mathcal P(\pi)\cap j_{E^*}(\HOD)\). In particular,
    $$\mathcal{P}(\tau)\cap \HOD = \mathcal{P}(\tau)\cap j_{E^*}(\HOD)$$
    and therefore \(\sigma\in j_{E^*}(\HOD)\), since \(\sigma \in\mathcal{P}(\tau)\cap \HOD\).

   It follows that
    \(\mathcal{P}_\delta(j_E(\delta))\cap \Ult(\HOD,E) = \mathcal{P}_\delta(j_E(\delta))\cap \HOD\),
    and since \(\cf(j_E(\delta)) \geq \delta\),  \(\Ult(\HOD,E)\) is closed under \({<}\delta\)-sequences in \(\HOD\).
    \end{proof}
After this  we have that $j_E\colon \HOD\rightarrow \Ult(\HOD,E)$ witnesses (in $\HOD$) the properties in the characterization of $C^{(1)}$-supercompactness.
\end{proof}
\begin{cor}
    In the $\HOD$ of the model of Theorem~\ref{thm:HOD_thm} the first supercompact is $C^{(1)}$-supercompact.
\end{cor}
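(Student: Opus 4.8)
The plan is to read the corollary as a direct combination of the two main theorems already established in this subsection, applied inside the specific model produced earlier. Let $V[G]$ be the generic extension from Theorem~\ref{thm:HOD_thm}, so that in $V[G]$ we simultaneously have: $\delta$ is extendible, the $\HOD$ hypothesis holds, and $\delta$ is the least strongly compact cardinal of $\HOD^{V[G]}$. The whole content of the corollary is to observe that the ``first supercompact of $\HOD^{V[G]}$'' to which it refers is precisely this $\delta$, and then to quote Theorem~\ref{HODabsorbsC(1)supercompacts}.

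First I would pin down which cardinal is the first supercompact of $\HOD^{V[G]}$. Because $\delta$ is extendible and the $\HOD$ hypothesis holds in $V[G]$, Theorem~\ref{thm: WoodinCoveringApprx} (Woodin) tells us that $\HOD^{V[G]}$ is a weak extender model for the supercompactness of $\delta$; in particular $\delta$ is supercompact in $\HOD^{V[G]}$. Now every supercompact cardinal is strongly compact, so the first supercompact of $\HOD^{V[G]}$ cannot lie strictly below the first strongly compact cardinal of $\HOD^{V[G]}$, which by Theorem~\ref{thm:HOD_thm} equals $\delta$. Since $\delta$ is itself supercompact in $\HOD^{V[G]}$, it follows that $\delta$ is exactly the first supercompact cardinal of $\HOD^{V[G]}$.

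Next I would apply Theorem~\ref{HODabsorbsC(1)supercompacts}, which asserts that under the $\HOD$ hypothesis every extendible cardinal is $C^{(1)}$-supercompact in $\HOD$. Working inside $V[G]$, where the $\HOD$ hypothesis holds and $\delta$ is extendible, this theorem yields at once that $\delta$ is $C^{(1)}$-supercompact in $\HOD^{V[G]}$. Combining this with the previous paragraph---where $\delta$ was identified as the first supercompact of $\HOD^{V[G]}$---gives the statement: the first supercompact of $\HOD^{V[G]}$ is $C^{(1)}$-supercompact there, as desired.

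I do not expect any genuine obstacle in this argument, as it is purely a bookkeeping combination of Theorems~\ref{thm:HOD_thm} and \ref{HODabsorbsC(1)supercompacts}. The only point worth stating carefully is the elementary cardinal-arithmetic-free observation that ``first strongly compact $=\delta$'' together with ``$\delta$ is supercompact'' forces ``first supercompact $=\delta$,'' since $C^{(1)}$-supercompactness is a property of that particular cardinal $\delta$ and the corollary phrases it in terms of ``the first supercompact'' of $\HOD^{V[G]}$.
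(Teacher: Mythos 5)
Your proposal is correct and is precisely the argument the paper intends: the corollary is stated without proof because it is the immediate combination of Theorem~\ref{thm:HOD_thm} (whose proof already notes that $\delta$ is supercompact and the least strongly compact in $\HOD^{V[G]}$, hence the least supercompact there) with Theorem~\ref{HODabsorbsC(1)supercompacts} applied in $V[G]$. Your careful identification of ``the first supercompact'' with $\delta$ via ``supercompact implies strongly compact'' is exactly the small bookkeeping step the paper leaves implicit.
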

\begin{remark}
   The above yields another proof of  the main identity crises  theorem proved by Hayut, Magidor and Poveda in \cite{HMP}. 
\end{remark}

Next, we would like to show that the hierarchies of $C^{(n)}$-supercompact/$C^{(n)}$-extendible cardinals above the first extendible transfer down to $\HOD$. We remind our readers that since we will be working under the assumptions of Theorem~\ref{thm: WoodinCoveringApprx}, $\HOD$ will have the $\delta$-covering and $\delta$-approximation.
\begin{theorem}\label{C(n)areabsorbed}
   Assume $\delta$ is  $\HOD$-supercompact and that the $\HOD$ hypothesis holds. For each $n\geq 1$ and $\lambda\in C^{(n)}$ with $\lambda>\delta$, $\HOD\models ``\lambda\in C^{(n)}$''.
\end{theorem}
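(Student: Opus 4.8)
The plan is to turn the statement into a reflection principle for the definable class $\HOD$ and then read it off from the $\Sigma_n$-correctness of $\lambda$ in $V$. First I would dispose of the case $n=1$, which is automatic: $\lambda\in C^{(1)}$ just means $\lambda$ is a $\beth$-fixed point, and since $\HOD\subseteq V$ gives $\beth^{\HOD}_\alpha\le\beth^{V}_\alpha$ for all $\alpha$, one has $\beth^{\HOD}_\lambda=\sup_{\alpha<\lambda}\beth^{\HOD}_\alpha\le\sup_{\alpha<\lambda}\beth^V_\alpha=\lambda$, so $\lambda$ remains a $\beth$-fixed point, hence $\Sigma_1$-correct, in $\HOD$. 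So I would assume $n\ge 2$, which gives in particular $V_\lambda\prec_{\Sigma_2}V$.

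Next I would pin down what must be proved. Because rank is absolute between transitive class models, $(V_\lambda)^{\HOD}=\{x\in\HOD:\operatorname{rank}(x)<\lambda\}=\HOD\cap V_\lambda$, so the conclusion ``$\HOD\models\lambda\in C^{(n)}$'' is literally the assertion $\HOD\cap V_\lambda\prec_{\Sigma_n}\HOD$. By Theorem~\ref{thm: WoodinCoveringApprx} the model $\HOD$ has the $\delta$-cover and $\delta$-approximation properties, and by Theorem~\ref{Definability} it is $\Sigma_2$-definable from the parameter $p:=\HOD\cap H(\delta^+)$; since $\lambda>\delta$ is a strong limit, $p\in V_\lambda$. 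Let $\psi$ be the defining $\Sigma_2$ formula. Using $V_\lambda\prec_{\Sigma_2}V$ I would first record that for every $x\in V_\lambda$ one has $x\in\HOD\iff V\models\psi(x,p)\iff V_\lambda\models\psi(x,p)$, so that $\HOD\cap V_\lambda$ is exactly the class that the formula $\psi$ carves out inside the set model $V_\lambda$.

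The core is then to establish $\HOD\cap V_\lambda\prec_{\Sigma_n}\HOD$ by a Tarski–Vaught induction up the Lévy hierarchy: the $\Delta_0$ level is transitivity, and the inductive step must produce, for a $\Pi_{m-1}$ formula $\theta$ (with $m\le n$) and parameters $\vec a\in\HOD\cap V_\lambda$ with $\HOD\models\exists x\,\theta(x,\vec a)$, a witness already lying in $\HOD\cap V_\lambda$. The idea is to view ``there is $x\in\HOD$ with $\HOD\models\theta(x,\vec a)$'' as a formula of $V$ in the parameters $\vec a,p$: if it can be arranged to be $\Sigma_m$, then $V_\lambda\prec_{\Sigma_n}V$ reflects it down to $V_\lambda$, yielding a witness $b\in V_\lambda$ which, being in $\HOD$, lies in $\HOD\cap V_\lambda$; the inductive hypothesis converts $\HOD\models\theta(b,\vec a)$ into $\HOD\cap V_\lambda\models\theta(b,\vec a)$, closing the step (and the upward direction is symmetric).

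The hard part is precisely the complexity bookkeeping in that step. If one relativizes $\theta$ to $\HOD$ using the raw $\Sigma_2$ formula $\psi$ for ``$x\in\HOD$'', each relativized quantifier block costs an extra alternation, so one is forced to reflect a $\Sigma_{m+1}$ statement and would only obtain the weaker transfer $C^{(n+1)}\to C^{(n)}$ in $\HOD$. To avoid this off-by-one I would exploit the $\delta$-\emph{approximation} property rather than mere $\Sigma_2$-definability: for (codes of) sets of ordinals $X\in V_\lambda$ it says $X\in\HOD$ iff $X\cap\tau\in\HOD$ for every $\tau\in\HOD$ with $|\tau|<\delta$, and since every such approximation is a small set of ordinals lying in $H(\delta^+)$, the clause ``$X\cap\tau\in\HOD$'' is simply ``$X\cap\tau\in p$''. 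Bounding the quantifier over $\tau$ to $p$ renders ``$X\in\HOD$'' a $\Delta_1(p)$ predicate, so relativizing a $\Sigma_m$ or $\Pi_m$ formula to $\HOD$ no longer raises its complexity and the reflection goes through at level $m$. Equivalently, the whole argument amounts to proving $(V_\lambda,\in,\HOD\cap V_\lambda)\prec_{\Sigma_n}(V,\in,\HOD)$ in the language with a predicate for $\HOD$, the entire content being that the approximation property lets that predicate be treated as $\Delta_1$ with a parameter in $V_\lambda$. This single estimate is where the hypotheses (the $\delta$-approximation and $\delta$-cover delivered by $\HOD$-supercompactness together with the $\HOD$ hypothesis) are actually spent; once it is in place the induction closes and yields $\HOD\models\lambda\in C^{(n)}$, as required.
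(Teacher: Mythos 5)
Your reduction of the theorem to $\HOD\cap V_\lambda\prec_{\Sigma_n}\HOD$, the use of Theorem~\ref{thm: WoodinCoveringApprx} plus the Definability Theorem to define $\HOD$ from $p=\HOD\cap H(\delta^+)\in V_\lambda$, and the plan to reflect along $V_\lambda\prec_{\Sigma_n}V$ all match the paper's skeleton, and your diagnosis of the danger (relativizing through the raw $\Sigma_2$ definition costs one quantifier alternation) is also correct. The gap is in the step you yourself flag as carrying the entire proof: the claim that the $\delta$-approximation property makes ``$X\in\HOD$'' a $\Delta_1(p)$ predicate on sets of ordinals. Your justification---that every approximation $X\cap\tau$ is ``a small set of ordinals lying in $H(\delta^+)$'', so that ``$X\cap\tau\in\HOD$'' may be replaced by ``$X\cap\tau\in p$''---is false. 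The approximating sets $\tau\in\HOD$ with $|\tau|<\delta$, and hence the intersections $X\cap\tau$, are small sets whose \emph{elements} may be ordinals far above $\delta$; already $\{\delta^{++}\}$ is a one-element set of ordinals not in $H(\delta^+)$. So membership of the approximations in $\HOD$ cannot be tested inside $p$, and the outer quantifier ``for all $\tau\in\HOD$ with $|\tau|<\delta$'' ranges over a proper class whose $\HOD$-membership is exactly the predicate being defined: the proposed $\Delta_1(p)$ characterization is circular. It is correct only for $X\subseteq\delta$ (there one may take $\tau\subseteq\delta$, and then $\tau$ and $X\cap\tau$ do lie in $H(\delta^+)$), whereas your Tarski--Vaught step must recognize witnesses of arbitrary rank below $\lambda$.

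The paper closes exactly this hole by a different device, which is worth comparing. It never manufactures a low-complexity membership predicate; instead it computes the complexity of the whole satisfaction relation: for $\varphi\in\Sigma_n$, the statement ``$\HOD\models\varphi(a)$'' is shown to be equivalent to
$\exists\alpha\,\bigl(\alpha\in C^{(n-1)}\ \wedge\ V_\alpha\models\text{``}W_P\models\varphi(a)\text{''}\bigr)$,
where $W_P$ is the $\Sigma_2$ definition of $\HOD$ from $P$. The point is that the $\Sigma_2$ formula is evaluated \emph{inside the set} $V_\alpha$, where satisfaction is $\Delta_1$, so its external complexity never enters; the only unbounded cost is the quantifier $\exists\alpha$ together with the $\Pi_{n-1}$ assertion $\alpha\in C^{(n-1)}$, which is $\Sigma_n$ on the nose (this is where $n\geq 2$ is used). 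Feeding this $\Sigma_n$ statement into the correctness of $\lambda$, in both directions, yields $\HOD_\lambda\prec_{\Sigma_n}\HOD$. Unless you can give a correct proof that ``$X\in\HOD$'' is $\Delta_1(p)$---which seems unlikely, since the Definability Theorem yields only a $\Sigma_2$ (hence, by uniqueness, $\Delta_2$) definition---your argument needs to be rerouted through a reflection of this kind.
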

\begin{proof}
   The claim is evident for $\Sigma_1$-correct cardinals (i.e., $\beth$-fixed points). So we assume that $n\geq 2$.  Fix  $\lambda>\delta$ a $\Sigma_n$-correct cardinal. By our assumption, $\HOD$ is a weak extender model for the supercompactness of $\delta$ and every such model has the $\delta$-cover and $\delta$-approximation properties.
   
By the Definability Theorem (Theorem~\ref{Definability}), $\HOD$ is $\Sigma_2$ definable (in $V$) taking $P:=\HOD\cap H(\delta^+)$ as parameter. In other words, $\HOD=(W_P)^V$ where $W_P$ is a predicate for the unique inner model with the $\delta$-covering and $\delta$-approximation properties that is $\Sigma_2$ definable via $P$.
   In particular, $$\HOD^{V_\lambda}=\HOD\cap V_\lambda=:\HOD_\lambda.$$

   We will show that $$\HOD_\lambda\prec_{\Sigma_n} \HOD.$$ Let us fix a $\Sigma_n$ formula $\varphi(x)$  and a set $a\in \HOD_\lambda$. 
   \begin{claim}
       $``\HOD\models \varphi(a)$'' is a $\Sigma_n$ formula with parameters $a,P.$
   \end{claim}
   \begin{proof}[Proof of claim]
       The above  is equivalent to saying
   $$\text{$\exists \alpha\in C^{(n-1)}(V_\alpha \models`` W_{P}\models \varphi(a)$'')}.$$
Thus $``\HOD\models \varphi(a)$'' has the claimed complexity. 
   \end{proof}
Suppose first that $\HOD_\lambda\models \varphi(a)$. This amounts to saying 
   $$V_\lambda\models\text{$``\HOD\models \varphi(a)$''}.$$
   By $\Sigma_n$-correctness of $\lambda$ and the previous claim,  $\HOD\models \varphi(a)$.

\smallskip

Conversely, suppose that $\HOD\models \varphi(a)$ holds. 
By the claim this is a true $\Sigma_n(a,P)$ sentence.  By $\Sigma_n$-correctness of $\lambda$ there is  $\alpha\in C^{(n-1)}\cap \lambda$ with
$$V_\alpha\models \HOD\models\varphi(a),$$
with $a,P\in V_\alpha$.
However, $V_\alpha$ and $V_\lambda$ agree on $\Sigma_n$ sentences with parameters in $V_\alpha$, so $\text{$V_\lambda\models ``\HOD\models\varphi(a)$'',}$ which is the same as $\HOD_\lambda\models\varphi(a).$
\end{proof}

\begin{theorem}\label{TrasnferringCnextendibles}
    Assume $\delta$ is $\HOD$-supercompact and that the $\HOD$ hypothesis holds.  If  $\kappa>\delta$ is $C^{(n)}$-extendible (resp. $C^{(n)}$-supercompact) then it is $C^{(n)}$-extendible (resp. $C^{(n)}$-supercompact) in $\HOD.$
\end{theorem}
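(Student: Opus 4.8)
The plan is to transfer the witnessing embeddings down to $\HOD$ by deriving $\HOD$-extenders and invoking the Universality Theorems, while extracting the requirement ``$j(\kappa)\in C^{(n)}$'' for free from Theorem~\ref{C(n)areabsorbed}. Throughout we work under the standing hypotheses, so by Theorem~\ref{thm: WoodinCoveringApprx} the model $\HOD$ is a weak extender model for the supercompactness of $\delta$ and has the $\delta$-cover and $\delta$-approximation properties; in particular the machinery behind Theorem~\ref{C(n)areabsorbed} is available, and the same cover/approximation argument shows that the supercompactness of $\kappa$ itself descends to $\HOD$. The guiding observation is this: if $j$ is a (set or class) embedding with $\crit(j)=\kappa>\delta$ and image $j(\kappa)\in C^{(n)}$, and if we derive the $\HOD$-extender $E$ of length $\eta$ for a strong limit $\eta>j(\kappa)$ with $\cf(\eta)\geq\delta$ and $V_\eta\subseteq\operatorname{ran}(j)$, then Universality (Theorem~\ref{UniversalityI} or~\ref{UniversalityII}) yields $E\in\HOD$ together with the agreement $\Ult(\HOD,E)\cap V_\eta=\HOD\cap V_\eta$. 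The factor embedding $k$ from $\Ult(\HOD,E)$ into the target, which has $\crit(k)\geq\eta>j(\kappa)$, forces $j_E(\kappa)=j(\kappa)$; since $j(\kappa)>\delta$, Theorem~\ref{C(n)areabsorbed} gives $\HOD\models j_E(\kappa)\in C^{(n)}$.

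For the extendibility case this goes through cleanly. Fixing $\lambda>\kappa$, choose a $C^{(n)}$-extendibility embedding $j\colon V_\mu\to V_\nu$ with $\crit(j)=\kappa$, $j(\kappa)>\lambda$, $j(\kappa)\in C^{(n)}$, and $\mu$ large enough that $\mathcal P([\kappa]^{<\omega})\cap\HOD\subseteq V_\mu$ and $j(\lambda)<\nu$. Because the target is literally $V_\nu$, the condition $V_\eta\subseteq\operatorname{ran}(j)$ holds for every $\eta<\nu$, so we may take $\eta$ to be a strong limit with $\cf(\eta)\geq\delta$ and $j(\lambda)<\eta<\nu$. Deriving the $\HOD$-extender $E$ of length $\eta$ and verifying the hypothesis of Theorem~\ref{UniversalityI} by $j_E(A)\cap\eta^{<\omega}=j(A)\cap\eta^{<\omega}\in\HOD\cap V_\eta$ (using $V_\eta\subseteq V_\nu$), we obtain $E\in\HOD$, $j_E(\kappa)=j(\kappa)$, and $\Ult(\HOD,E)\cap V_\eta=\HOD\cap V_\eta$. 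Restricting the internal embedding $j_E\colon\HOD\to\Ult(\HOD,E)$ to $\HOD\cap V_\lambda$ and using the agreement up to $\eta>j(\lambda)$, we obtain an embedding $\HOD\cap V_\lambda\to\HOD\cap V_{j(\lambda)}$ lying in $\HOD$, with critical point $\kappa$, image $j(\kappa)>\lambda$, and $j(\kappa)\in C^{(n)}$ as computed in $\HOD$. As $\lambda$ was arbitrary, $\kappa$ is $C^{(n)}$-extendible in $\HOD$.

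For the supercompactness case I would instead verify Poveda's characterization (Theorem~\ref{CharacterizingCnsupercompacts}) inside $\HOD$: that $\kappa$ is supercompact in $\HOD$ (already noted) and that for each $\HOD$-regular $\lambda>\kappa$ there is an internal embedding $i\colon\HOD\to N'$ with $\crit(i)=\kappa$, $(N')^{<\kappa}\cap\HOD\subseteq N'$, $\cf^{\HOD}(i(\kappa))>\lambda$, and $\HOD\models i(\kappa)\in C^{(n)}$. Starting from a $V$-witness $i\colon V\to N$ of clause~(2) and deriving the $\HOD$-extender $E$ of length $\eta$ as above, the $<\kappa$-closure of $\Ult(\HOD,E)$ in $\HOD$ is easy: a $\HOD$-sequence of length $<\kappa=\crit(j_E)$ of elements $j_E(f_\xi)(s_\xi)$ is reassembled in $\Ult(\HOD,E)$ because the seed sequence $\langle s_\xi\rangle$ already lies in $V_\eta\cap\HOD=\Ult(\HOD,E)\cap V_\eta$; and $\cf^{\HOD}(i(\kappa))\geq\cf^{V}(i(\kappa))>\lambda$ since $\HOD\subseteq V$. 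The remaining requirement $\HOD\models i(\kappa)\in C^{(n)}$ again reduces, via Theorem~\ref{C(n)areabsorbed}, to arranging $j_E(\kappa)=i(\kappa)$, which demands $\eta>i(\kappa)$ and hence $V_\eta\subseteq\operatorname{ran}(i)$ with $\eta>i(\kappa)$. Here lies the main obstacle, and the one point where the two cases genuinely diverge: a plain supercompactness/tallness embedding has target $N$ closed only under ${<}\kappa$-sequences, so $V_{i(\kappa)}\not\subseteq N$ in general and Universality cannot be applied at a length exceeding $i(\kappa)$, as it could for extendibility (where $M=V_\nu$). Resolving this requires choosing the clause~(2) witness to be additionally superstrong-flavoured above its image, i.e.\ an extender embedding $i\colon V\to N$ with $\crit(i)=\kappa$, $N^{<\kappa}\subseteq N$, $V_{i(\kappa)}\subseteq N$, $\cf(i(\kappa))>\lambda$, and $i(\kappa)\in C^{(n)}$. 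That such ``$C^{(n)}$-superstrong'' tall embeddings are furnished by $C^{(n)}$-supercompactness of $\kappa$ is the technical heart of the argument (implicit in the extender analysis of \cite{PovAxiomA,Tsan}); granting it, one derives $E$ at a length $\eta$ that is a strong limit with $i(\kappa)<\eta$, $\cf(\eta)\geq\delta$, and $V_\eta\subseteq N$, obtains $E\in\HOD$, $j_E(\kappa)=i(\kappa)$, together with the closure and cofinality facts above, and concludes via Theorem~\ref{C(n)areabsorbed} that $j_E$ witnesses clause~(2) of Theorem~\ref{CharacterizingCnsupercompacts} in $\HOD$. Combined with supercompactness of $\kappa$ in $\HOD$, this gives that $\kappa$ is $C^{(n)}$-supercompact in $\HOD$.
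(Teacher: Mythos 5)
Your extendible half is essentially the paper's own argument: the paper runs it with Tsaprounis-style class embeddings $j\colon V\to M$ satisfying $V_{j(\kappa)}\s M$, $M^\lambda \s M$ together with Theorem~\ref{UniversalityII}, while you run it with rank-to-rank restrictions and Theorem~\ref{UniversalityI}, exactly as in the proofs of Theorems~\ref{HODabsorbsC(1)supercompacts} and~\ref{thm:CCEandHOD}. Two slips, both repairable: for $A\in\HOD\cap\mathcal{P}(\eta^{<\omega})$ with $\eta>\mu$ the expression $j(A)$ is undefined, so $j_E(A)\cap\eta^{<\omega}$ must be computed from the pieces $j(A\cap[\kappa]^n)$ (or via amenability and $\delta$-approximation, as the paper does); and you need $\mu\in C^{(2)}$ so that $\HOD\cap V_\mu=\HOD^{V_\mu}$ before $\HOD$-sets can be pushed through $j$ at all.

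The supercompact half, however, has a genuine gap, located exactly where you flagged it. Your argument stands on the claim that $C^{(n)}$-supercompactness of $\kappa$ furnishes, for each $\lambda$, an embedding $i\colon V\to N$ with $\crit(i)=\kappa$, $\cf(i(\kappa))>\lambda$, $i(\kappa)\in C^{(n)}$ and $V_{i(\kappa)}\s N$. The cited sources do not contain this: Tsaprounis' extender analysis concerns extendible cardinals, and Theorem~\ref{CharacterizingCnsupercompacts} produces witnesses with only ${<}\kappa$-closure. Worse, the claim is false. Any such $i$ witnesses outright that $\kappa$ is superstrong, and $C^{(n)}$-supercompactness does not imply superstrongness: no supercompactness ultrapower can serve (if $M^\lambda\s M$ and $j(\kappa)>\lambda$, a counting of functions shows $\mathcal{P}(\lambda^+)\not\s M$, hence $V_{j(\kappa)}\not\s M$), and no other embedding need exist either, since after the Laver preparation of (say) an extendible cardinal $\kappa$, the $\lambda$-$C^{(1)}$-supercompactness embeddings lift (the preparation has size $\kappa$, so $j(\kappa)$ retains the relevant correctness), so $\kappa$ remains $C^{(1)}$-supercompact, while by the theorem of Bagaria--Hamkins--Tsaprounis--Usuba that superstrong cardinals are never Laver indestructible, $\kappa$ is no longer superstrong there. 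So the ``superstrong-flavoured'' witnesses you require may simply not exist under the theorem's hypotheses, and no correct proof can pass through them.

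This is also where your proposal genuinely diverges from the paper. The paper never upgrades the witness: it takes the tallness embedding of Theorem~\ref{CharacterizingCnsupercompacts} as is, derives the $\HOD$-extender of length $j(\kappa)$, and obtains $E\in\HOD$, the ${<}\kappa$-closure of $\Ult(\HOD,E)$ inside $\HOD$, and $\HOD\models j(\kappa)\in C^{(n)}$ from, respectively, the Universality machinery, the covering argument of Theorem~\ref{HODabsorbsC(1)supercompacts}, and Theorem~\ref{C(n)areabsorbed}. The agreement that makes Universality applicable is thus extracted from the $\HOD$-side data --- the $\delta$-covering and $\delta$-approximation properties granted by Theorem~\ref{thm: WoodinCoveringApprx} --- rather than from rank agreement $V_{j(\kappa)}\s M$ on the $V$ side. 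You correctly sensed that this verification is the delicate point (the paper is terse exactly there), but the repair has to go through the weak-extender-model/approximation properties of $\HOD$, not through a strengthening of the large-cardinal hypothesis on $\kappa$ that is provably unavailable.
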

\begin{proof}
    Fix $\kappa>\delta$ as above. 
    \begin{claim}
        If $\kappa$ is  $C^{(n)}$-supercompact then so it is in $\HOD$.
    \end{claim}
    \begin{proof}[Proof of claim]
We employ the characterization of $C^{(n)}$-supercompactness provided by Theorem~\ref{CharacterizingCnsupercompacts}. {First, $\kappa$ is supercompact in $\HOD$ by Hamkins Univerality Theorem II (Theorem~\ref{UniversalityII}).} Second, for each $\lambda>\delta$ let $j\colon V\rightarrow M$ be an elementary embedding with $\crit(j)=\delta$, $\cf(j(\delta))>\lambda$ and $j(\delta)\in C^{(n)}$. Let $E$ be the $\HOD$-extender of length $j(\delta)$ induced by $j$. By Hamkins Universality Theorem, $j_{E}\colon \HOD\rightarrow N$ is an elementary embedding definable in $\HOD$. By the argument in Theorem~\ref{HODabsorbsC(1)supercompacts}, $N^{<\delta}\cap \HOD\s \HOD$. Finally,  thanks to Theorem~\ref{C(n)areabsorbed}, $\HOD\models j(\delta)\in C^{(n)}$. 
    \end{proof}

    \begin{claim}
         If $\kappa$ is  $C^{(n)}$-extendible then so it is in $\HOD$.
    \end{claim}
    \begin{proof}[Proof of claim]
       We employ Tsaprounis' characterization of $C^{(n)}$-extendibility \cite{Tsan}; namely, $\delta$ is $C^{(n)}$-extendible if and only if for each $\lambda>\delta$ there is an elementary embedding $j\colon V\rightarrow M$ with $\crit(j)=\delta$, $j(\delta)>\lambda$, $M^{\lambda}\s M$, $V_{j(\delta)}\s M$ and $j(\delta)\in C^{(n)}$.  Let $E$ be the $\HOD$-extender of length $j(\delta)$ induced by $j$. By the Universality Theorem II (Theorem~\ref{UniversalityII}), the embedding $j_E\colon \HOD\rightarrow N$ is definable inside $\HOD$ and $\HOD\cap V_{j(\delta)}=N\cap V_{j(\delta)}$. In particular, $\HOD_{j(\delta)}\s N$. Also, {arguing exactly as before, $N^\lambda\cap \HOD\s N$} and $\HOD\models j(\delta)\in C^{(n)}.$ This shows that $\delta$ is $C^{(n)}$-extendible in $\HOD.$
    \end{proof}
    This completes the proof of the theorem.
\end{proof}

\subsection{On cardinal-correct extendible cardinals}\label{sec: CCE}
In \cite{Caicedo} Caicedo introduced \emph{cardinal-preserving embeddings}; namely, elementary embeddings $j\colon M\rightarrow N$ between transitive models such that $\mathrm{Card}^M=\mathrm{Card}^N=\mathrm{Card}$. (Here $\mathrm{Card}$ denotes the class of all cardinals.) 
More recently, Goldberg showed that if there is a proper class of strongly compact cardinals there is no cardinal-preserving elementary embedding $j\colon V\rightarrow M$ \cite{GolComb}. The situation with set-sized transitive models is different. For instance, any elementary embedding between rank initial segments of $V$ is  cardinal-preserving. Inspired by previous work of Magidor and V\"a\"an\"aanen  on L\"owenheim-Skolem-Tarski numbers of strong logics \cite{MagVan}, Galeotti, Khomskii,
and V\"a\"an\"aanen considered \emph{upward} Löwenheim--Skolem--Tarski numbers \cite{gal2020}. To study the upward Löwenheim--Skolem--Tarski number of the
equicardinality logic $\mathcal{L}(I)$, in \cite{GitOsi} the notion of \emph{cardinal-correct extendibility} was proposed:

\begin{definition}[Gitman--Osinski]
    A cardinal $\delta$ is \emph{cardinal-correct extendible} (in short, $\delta$ is $\mathsf{cce}$) if for each $\alpha>\delta$ there is an elementary embedding $j\colon V_\alpha\rightarrow M$ with $\crit(j)=\delta$, $j(\delta)>\alpha$ and $\mathrm{Card}^M=M\cap \mathrm{Card}.$
\end{definition}
 Arguments of Goldberg showed that  $\mathsf{cce}$ cardinals are  strongly compact (see \cite[Proposition~9.5]{GitOsi}). Poveda also  showed that the first $\mathsf{cce}$ cardinal is consistently greater than the first supercompact \cite{PovAxiomA}, and later Gitman and Osinski found an alternate proof \cite{GitOsi}.   Clearly, every extendible cardinal is  $\mathsf{cce}$. The opposite question---namely, whether every $\textsf{cce}$ cardinal must be extendible---is implicit in \cite{GitOsi}. This very question was asked by Osinski to Poveda during a visit to Harvard in the Spring of 2024. As we will show, it turns out that the two notions are not equivalent.  
 The following theorem provides the first key observation:

 \begin{theorem}\label{thm:CCEandHOD}
   Suppose that $\HOD$ is cardinal-correct (i.e., $\HOD^M=\HOD\cap \mathrm{Card}$). If $\delta$ is extendible  
   then it is $\mathsf{cce}$ in $\HOD$.  
 \end{theorem}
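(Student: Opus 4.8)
The plan is to verify the embedding clause of cardinal-correct extendibility directly \emph{inside} $\HOD$, by using a genuine extendibility embedding of $V$ to manufacture a $\HOD$-extender that happens to land in $\HOD$. Throughout write $\HOD_\xi:=\HOD\cap V_\xi$. First I would reduce the problem: if $\pi\colon \HOD_{\alpha^*}\to M$ witnesses $\mathsf{cce}$ at some $\alpha^*$ inside $\HOD$, then for every $\alpha<\alpha^*$ the restriction $\pi\restriction \HOD_\alpha$ (still an element of $\HOD$) witnesses it at $\alpha$, since any $(V_\theta)^M$ inherits cardinal-correctness from $M$. Hence it suffices to produce a witness for a proper class of $\alpha$, and I will let $\alpha$ range over cardinals in $C^{(2)}$ above $\delta$, so that $\HOD^{V_\alpha}=\HOD_\alpha$ and the images computed below are again initial segments of $\HOD$.

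The key preliminary step is to secure that $\HOD$ has the $\delta$-cover and $\delta$-approximation properties. I would obtain these from Theorem~\ref{thm: WoodinCoveringApprx}, whose hypotheses are that the $\HOD$ hypothesis holds and $\delta$ is $\HOD$-supercompact. The latter is automatic, since every extendible cardinal is $\HOD$-supercompact. For the former, this is the one place where cardinal-correctness is genuinely used: if the $\HOD$ hypothesis failed then, $\delta$ being extendible, the second horn of the HOD Dichotomy (Theorem~\ref{theo:HOD Dichotomy}) would hold, i.e. every regular cardinal ${\geq}\,\delta$ would be $\omega$-strongly measurable—hence measurable, in particular a \emph{limit} cardinal—in $\HOD$. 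But cardinal-correctness ($\mathrm{Card}^{\HOD}=\mathrm{Card}$) makes every successor cardinal $\mu^{+}$ of $V$ a successor cardinal of $\HOD$, so no such $\mu^{+}$ can be a limit cardinal of $\HOD$; as there is a proper class of them, we reach a contradiction. Thus the $\HOD$ hypothesis holds and $\HOD$ is a weak extender model for the supercompactness of $\delta$, with the required covering and approximation.

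With this in hand I would fix a cardinal $\alpha\in C^{(2)}$, $\alpha>\delta$, and use extendibility of $\delta$ in $V$ to choose a genuine embedding $j\colon V_\beta\to V_\gamma$ with $\crit(j)=\delta$, $j(\delta)>\alpha$, $\beta>\alpha$, and $\gamma\in C^{(2)}$, so that $\HOD^{V_\gamma}=\HOD\cap V_\gamma\subseteq\HOD$; restricting to $\HOD$ gives an elementary $\bar j\colon\HOD_\beta\to\HOD_\gamma$. Let $E$ be the $\HOD$-extender of length $\eta:=j(\alpha)+1$ derived from $\bar j$ (well defined, as its measures only involve $\bar j$ on subsets of $[\delta]^{<\omega}$, which have rank below $\delta$). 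The crucial claim is $E\in\HOD$, which I would establish through Woodin's Universality Theorem~\ref{UniversalityI}: its condition~(1), namely $j_E(A)\cap\eta^{<\omega}\in\HOD$ for every $A\in\HOD\cap\mathcal{P}(\eta^{<\omega})$, is verified by the $\delta$-approximation argument from the proof of Theorem~\ref{HODabsorbsC(1)supercompacts}, now using $\HOD^{V_\gamma}=\HOD\cap V_\gamma$ in place of the equation $(\ast)$ there. Since $E\in\HOD$, the ultrapower $j_E\colon\HOD\to \Ult(\HOD,E)$ is definable in $\HOD$; and because $\eta>j(\alpha)$, the extender captures $j\restriction\HOD_\alpha$ exactly, so $j_E\restriction\HOD_\alpha=j\restriction\HOD_\alpha=:\pi$. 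Thus $\pi$ is an \emph{element} of $\HOD$, elementary from $\HOD_\alpha$ into $\HOD_{j(\alpha)}=(V_{j(\alpha)})^{\HOD}$, with $\crit(\pi)=\delta$ and $\pi(\delta)=j(\delta)>\alpha$. Its target, being a rank-initial segment of $\HOD$, is cardinal-correct in $\HOD$, so $\pi$ witnesses $\mathsf{cce}$ at $\alpha$ inside $\HOD$.

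The main obstacle is the claim $E\in\HOD$: condition~(1) of the Universality Theorem must be checked for an extender whose length $\eta=j(\alpha)+1$ necessarily exceeds the height $\beta$ of the domain of $j$ (extendibility forces $j(\delta)>\beta$, so $j(\alpha)>\beta$), whence the quick identity $j_E(A)=j(A)$ is unavailable and one genuinely needs the approximation property of $\HOD$. Securing that property is, in turn, the only essential use of the cardinal-correctness hypothesis, through the HOD Dichotomy argument above. By contrast, cardinal-correctness of $\HOD$ in $V$ is \emph{not} needed to make the target cardinal-correct: once the witness is arranged so that its target is the rank-initial segment $\HOD_{j(\alpha)}$, internal cardinal-correctness in $\HOD$ is automatic.
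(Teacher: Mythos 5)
Your proof tracks the paper's architecture in its first half — deriving the HOD hypothesis from cardinal-correctness (the paper's phrasing: no successor cardinal is $\omega$-strongly measurable in $\HOD$), invoking Theorem~\ref{thm: WoodinCoveringApprx} for the $\delta$-approximation property, and pushing a derived $\HOD$-extender into $\HOD$ via Theorem~\ref{UniversalityI} — but it breaks down exactly where you diverge from the paper. You demand an extendibility embedding $j\colon V_\beta\to V_\gamma$ with $\gamma\in C^{(2)}$, and you in fact also need $\beta\in C^{(2)}$ (without it $\HOD\cap V_\beta$ need not equal $\HOD^{V_\beta}$; it is not a definable class of the structure $V_\beta$, so $j\restriction(\HOD\cap V_\beta)$ is not known to be elementary into $\HOD\cap V_\gamma$, and your map $\bar j$ is not well defined). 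Extendibility does not supply such embeddings; worse, their existence for arbitrarily large $\beta$ implies $\delta$ is $C^{(2)}$-extendible. Indeed, extendible cardinals are $\Sigma_3$-correct, so $\delta\in C^{(2)}$, whence $V_\delta\prec_{\Sigma_2}V_\beta$, i.e.\ $V_\beta\models\delta\in C^{(2)}$; by elementarity $V_\gamma\models j(\delta)\in C^{(2)}$, and $\gamma\in C^{(2)}$ upgrades this to $j(\delta)\in C^{(2)}$ in $V$; restricting $j$ to $V_\lambda$ for $\delta<\lambda<\beta$ then witnesses $\lambda$-$C^{(2)}$-extendibility for every such $\lambda$. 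But an extendible cardinal need not be $C^{(2)}$-extendible — the first $C^{(2)}$-extendible is a limit of extendibles — so your construction is impossible when $\delta$ is the \emph{first} extendible, which is precisely the case the theorem is applied to in Theorem~\ref{ccenonextendible} (the model of Theorem~\ref{thm:HOD_thm}).

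This also explains why your concluding claim — that the second use of cardinal-correctness is avoidable — does not hold up: the paper is structured to dodge exactly the obstruction above. It takes an arbitrary extendibility embedding $j\colon V_{\alpha+1}\to V_{\beta+1}$ with no correctness assumption on $\beta$, and uses as target the internally computed model $\HOD^{V_\beta}=j(\HOD_\alpha)$, which lies inside $\HOD$ for free (any set that is OD in $V_\beta$ is OD in $V$, using $\beta$ as a parameter) but is in general a \emph{proper} subclass of $\HOD\cap V_\beta$, not a rank-initial segment of $\HOD$. Cardinal-correctness of this target inside $\HOD$ is therefore not automatic, and that is where the hypothesis is used a second time: ``every $\HOD$-cardinal is a cardinal'' holds in $V_\alpha$ by cardinal-correctness of $\HOD$, transfers to $V_\beta$ by elementarity, and cardinal-correctness is invoked once more to turn $V$-cardinals back into $\HOD$-cardinals. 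Two secondary but genuine flaws: you impose no cofinality condition on $\alpha$, yet the approximation argument you cite requires the extender length to have cofinality at least $\delta$ (so that sets of $\HOD$-size ${<}\delta$ are bounded); the paper demands $\cf(\alpha)>\delta$ and uses the limit length $j(\alpha)$, whereas your length $j(\alpha)+1$ is a successor ordinal for which that reduction fails. Finally, the identity $j_E\restriction\HOD_\alpha=j\restriction\HOD_\alpha$ is not immediate from the length exceeding $j(\alpha)$; it is the content of the factor-embedding argument, Claim~\ref{FirsCoincidence} of the paper.
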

 \begin{proof}
       Let $\alpha>\delta$ be a $\Sigma_2$-correct cardinal such that   $\cf(\alpha)>\delta$. Let $j\colon V_{\alpha+1}\rightarrow V_{\beta+1}$ be an elementary embedding witnessing $(\alpha+1)$-extendibility of $\delta$ and $F$ be the \(\text{HOD}\)-extender of length \(\beta\) derived from \(j\). That is,
       $$F=\langle F_a: a\in [\beta]^{<\omega}\rangle$$ where
    $$F_a:=\{X\s [\alpha]^{|a|}: X\in\HOD\,\wedge\, a\in j(X)\}.$$
    Consider the extender ultrapower of \(\HOD\) by $F$ (using only functions in \(\HOD\):  $$\text{$j_{F}\colon \HOD^{ }\rightarrow M_F$.}$$
   
    \begin{claim}\label{FirsCoincidence}
$j_F\restriction \HOD^{ }_{\alpha+1}=j\restriction \HOD_{\alpha+1}$.
    \end{claim}

    \begin{proof}[Proof of claim]
This is a standard argument using that the factor embedding $k\colon (M_F)_{\beta+1}\rightarrow j(\HOD_{\alpha+1})$, defined by 
$$k([a,f]_F):=j(f)(a)$$
has no critical point (since by construction \(\beta\subseteq \text{ran}(k)\)). (To be precise, \(k\) is defined on \([a,f]_F\) whenever \(f : [\alpha]^{|a|}\to V_{\alpha+1}\),
in which case \(f\) is essentially an element of \(V_{\alpha+1}\), so we can make sense of \(j(f)\).)

It follows from this that \(k\restriction (M_F)_\beta\) is onto \(j(\HOD_\alpha)\), from which it immediately follows that \((M_F)_{\beta+1}\subseteq j(\HOD_{\alpha+1})\) and \(k\) is the inclusion map. To see \(k\restriction (M_F)_\beta\) is onto \(j(\HOD_\alpha)\), note that since \(\beta\) is a Beth fixed-point, \((M_F)_\beta\) satisfies that for each \(\xi < \beta\), \((M_F)_\xi\) is the surjective image of an ordinal \(\gamma\). Fixing in \((M_F)_\beta\) a such a surjection \(g: \gamma\to (M_F)_\xi\), we have \[j(\HOD_\alpha)\cap V_\xi = k((M_F)_\xi) = k(g)[k(\gamma)] = k(g)[k[\gamma]] = k[g[\gamma]]\subseteq \text{ran}(k).\qedhere\]
\end{proof}

   Next we argue that $F\in \HOD.$ For this we use {Woodin's Universality Theorem I} (Theorem~\ref{UniversalityI}). We shall also employ Theorem~\ref{thm: WoodinCoveringApprx} saying that if $\delta$ is $\HOD$-supercompact (in our case $\delta$ is extendible) and the $\HOD$ hypothesis holds, then  $\HOD$ has the $\delta$-approximation property. Notice that since $\HOD$ is assumed to be cardinal correct, the $\HOD$ hypothesis must hold, since no successor cardinal is \(\omega\)-strongly measurable in \(\HOD\).

    \begin{claim}
         $F\in \HOD.$
    \end{claim}

    \begin{proof}[Proof of claim]
    By Woodin's Universality Theorem I (Theorem~\ref{UniversalityI}), it suffices to  argue that 
        $$j_F(A)\cap \beta\in \HOD,$$
      for all  $A\in \mathcal{P}(\alpha)\cap \HOD$.
    So fix such an \(A\subseteq \alpha\).

    Since \(A\in \HOD\), \(A\) is amenable to \(\HOD_\alpha\) in the sense that \(A\cap \xi\in \HOD_\alpha\) for all \(\xi < \alpha\). 
    Since \(\alpha\) is \(\Sigma_2\)-correct, we have \(\HOD_\alpha = \HOD^{V_\alpha}\), and so \[j(\HOD_\alpha) = j(\HOD^{V_\alpha}) = \HOD^{V_\beta}\subseteq \HOD\] 
    Moreover by the elementarity of \(j\), \(j(A)\) is amenable to \(j(\HOD_\alpha)\), and so \(j(A)\) is amenable to \(\HOD\).
    
    Since \(\alpha\) has cofinality at least \(\delta\), \(j(\alpha) = \beta\) has cofinality at least \(\delta\). Since \(\HOD\) has the \(\delta\)-approximation property, any \(B\subseteq \beta\)
    that is amenable to \(\HOD\) actually belongs to \(\HOD\). It follows that \(j(A)\in \HOD\), as desired.
    \end{proof}

We deduce from the previous claims that $$j_F\restriction \HOD_\alpha=j\restriction \HOD_\alpha\in \HOD,$$
and recall that, by elementarity and $\Sigma_2$-correctness of $\alpha$, 
$$j\restriction \HOD_\alpha\colon \HOD_\alpha\rightarrow \HOD^{V_\beta}.$$
Thus  $\HOD$ thinks that there is an elementary embedding
$$\iota\colon V_\alpha\rightarrow M:=\HOD^{V_\beta}$$
with $\crit(\iota)=\delta$ and $\iota(\delta)>\alpha$.
\begin{claim}
    $\HOD^{V_\beta}$ is cardinal-correct in $\HOD$.
\end{claim}
\begin{proof}
Since $\HOD^{V_\beta}\s \HOD$, every $\HOD$-cardinal is a $\HOD^{V_\beta}$-cardinal.

  Let us argue the converse. Since $\alpha$ is $\Sigma_2$-correct that yields $$\text{$\HOD^{V_\alpha}= \HOD_\alpha$}.$$ 
  If $\tau$ is a $\HOD_\alpha$-cardinal then it is a $\HOD$-cardinal and thus a $V$-cardinal as well. (Because $\HOD$ is cardinal-correct.) Thus, $\tau$ is a cardinal in $V_\alpha$. 
  
One concludes that
  $$V_\alpha\models \text{``Every $\HOD$-cardinal is a cardinal''}.$$
  By elementarity of $j$, 
    $$V_\beta\models \text{``Every $\HOD$-cardinal is a cardinal''},$$
    hence every $\HOD^{V_\beta}$-cardinal is a cardinal in $V$ and thus a cardinal in $\HOD$ (again, by cardinal-correctness of $\HOD$). 
\end{proof}
The above argument show that in $\HOD$ the cardinal $\delta$ is $\alpha$-$\mathsf{cce}$ for proper-class many $\alpha$'s, thereby we conclude that   $\delta$ is $\mathsf{cce}$ in $\HOD$.
 \end{proof}

 The second key observation to answering Gitman--Osinski's question in the negative is precisely Theorem~\ref{thm:HOD_thm}:

 \begin{theorem}\label{ccenonextendible}
     It is consistent with the $\HOD$ hypothesis that the first strongly compact cardinal is cardinal-correct extendible. In particular, it is consistent for a cardinal-correct extendible not to be extendible.
 \end{theorem}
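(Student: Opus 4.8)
The plan is to take the model $V[G]$ produced by Theorem~\ref{thm:HOD_thm} and argue that the inner model $M:=\HOD^{V[G]}$ already witnesses the theorem. Recall from that construction that $\HOD^{V[G]}=V[G_0]$, where $G_0$ is the restriction of $G$ to the Easton product $\prod_{\alpha<\delta}\mathbb{NR}_\alpha$, that $\delta$ is extendible in $V[G]$, that the $\HOD$ hypothesis holds in $V[G]$, and that $\delta$ is the least strongly compact cardinal of $M$. I want to show that in $M$ the least strongly compact cardinal $\delta$ is $\mathsf{cce}$, that $\delta$ fails to be extendible in $M$, and that $M$ satisfies the $\HOD$ hypothesis.

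First I would apply Theorem~\ref{thm:CCEandHOD} inside $V[G]$, for which I must check that $\HOD^{V[G]}$ is cardinal-correct in $V[G]$; equivalently, that $V[G]$ and $V[G_0]$ have the same cardinals. This holds because $\mathbb{P}$ and its sub-product $\prod_{\alpha<\delta}\mathbb{NR}_\alpha$ are cardinal-preserving over $V$: by Fact~\ref{fact: folklore} the relevant two-step iterations are forcing equivalent to Cohen forcing, each $\mathbb{NR}_\alpha$ is $\lambda_\alpha^+$-cc and $\lambda_\alpha^+$-strategically closed, and the coding posets are directed-closed, so an Easton-support argument shows that $V$, $V[G_0]$, and $V[G]$ all have the same cardinals. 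Granting this, Theorem~\ref{thm:CCEandHOD} (applied in $V[G]$, where $\delta$ is extendible) yields that $\delta$ is $\mathsf{cce}$ in $M=\HOD^{V[G]}$.

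Next I would read off the remaining features inside $M$. Since $\mathsf{cce}$ cardinals are strongly compact by \cite[Proposition~9.5]{GitOsi} and $\delta$ is the least strongly compact of $M$, the cardinal $\delta$ is in fact the least $\mathsf{cce}$ cardinal of $M$, establishing that the first strongly compact of $M$ is $\mathsf{cce}$. Moreover $\delta$ cannot be extendible in $M$: a standard reflection argument, using that extendible cardinals are $\Sigma_3$-correct and that supercompactness is $\Pi_2$-expressible, shows that any extendible cardinal has a supercompact---hence a strongly compact---cardinal strictly below it; this would contradict $\delta$ being the least strongly compact of $M$. This already delivers the ``in particular'' clause, that a $\mathsf{cce}$ cardinal need not be extendible.

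Finally I must verify the $\HOD$ hypothesis in $M$, and I expect this to be the main obstacle, since it concerns $\HOD^{M}=\HOD^{\HOD^{V[G]}}$ rather than $\HOD^{V[G]}$ itself. The key computation is that $\HOD^{M}=V$: because $\prod_{\alpha<\delta}\mathbb{NR}_\alpha$ is ordinal-definable and cone-homogeneous and $V=\mathrm{gHOD}$, one gets $\HOD^{V[G_0]}\subseteq V\subseteq \HOD^{V[G_0]}$. Since $G_0$ is cardinal-preserving, $\HOD^{M}=V$ has the same cardinals as $M$, so $\HOD^{M}$ is cardinal-correct in $M$. As in the proof of Theorem~\ref{thm:CCEandHOD}, a cardinal-correct $\HOD$ satisfies the $\HOD$ hypothesis: an $\omega$-strongly measurable cardinal is measurable in $\HOD$, whereas successor cardinals of $M$ remain successor cardinals of $\HOD^{M}$ and hence are never $\omega$-strongly measurable there, so a proper class of regular cardinals witnesses the hypothesis in $M$. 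This completes the plan.
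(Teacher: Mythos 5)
Your proposal is correct and takes essentially the same approach as the paper: the witness is $M=\HOD^{V[G]}$ for the model $V[G]$ of Theorem~\ref{thm:HOD_thm}, where cardinal-correctness of $\HOD^{V[G]}$ (from $V\subseteq\HOD^{V[G]}\subseteq V[G]$ and cardinal preservation) lets you apply Theorem~\ref{thm:CCEandHOD} to conclude $\delta$ is $\mathsf{cce}$ in $M$, while $\delta$ is the least strongly compact of $M$. Your two extra verifications---that the $\HOD$ hypothesis holds in $M$ itself (via $\HOD^{M}=V$ being cardinal-correct in $M$) and that $\delta$ is not extendible in $M$ (below any extendible there is a supercompact, by $\Sigma_3$-correctness)---are points the paper leaves implicit, and both are correct (modulo the harmless slip that $\mathbb{NR}_\alpha=\mathbb{NR}(S^{\lambda_\alpha^+}_\omega)$ is $\lambda_\alpha^{++}$-cc, not $\lambda_\alpha^{+}$-cc).
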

 \begin{proof}
    Let $V[G]$ be the model of Theorem~\ref{thm:HOD_thm}. Recall that in this model $V\subseteq \HOD^{V[G]}\subseteq V[G]$ and $V$ and $V[G]$ have the same cardinals and cofinalities. This implies that $V[G]$ and $\HOD^{V[G]}$ have the same cardinals and cofinalities. Moreover $\delta$ is extendible in $V[G]$, so by Theorem~\ref{thm:CCEandHOD}, $\delta$ is $\mathsf{cce}$ in $\HOD^{V[G]}$. But by the proof of Theorem \ref{thm:HOD_thm}, $\delta$ is the least strongly compact cardinal of \(\HOD^{V[G]}\). 
 \end{proof}
 \begin{cor}[Identity Crisis]
 The first $\mathsf{cce}$ cardinal can be either the first strongly compact or greater than the first strongly compact.
 \end{cor}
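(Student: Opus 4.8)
The plan is to read off both halves of the identity crisis directly from results already established, using only the basic fact (due to Goldberg, see \cite[Proposition~9.5]{GitOsi}) that every $\mathsf{cce}$ cardinal is strongly compact. That fact guarantees the first $\mathsf{cce}$ cardinal can never lie \emph{below} the first strongly compact cardinal, so the two scenarios in the statement---equality versus strict inequality---are genuinely exhaustive, and it suffices to exhibit a model for each.

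For the first scenario I would invoke Theorem~\ref{ccenonextendible}, which produces (consistently with the $\HOD$ hypothesis) a model in which the first strongly compact cardinal $\delta$ is itself $\mathsf{cce}$. Since every $\mathsf{cce}$ cardinal is strongly compact, no cardinal strictly below $\delta$ can be $\mathsf{cce}$; hence in that model $\delta$ is simultaneously the first strongly compact and the first $\mathsf{cce}$ cardinal, realizing the equality case. The only point requiring a moment's care is precisely this observation---that the coincidence is with the \emph{first} strongly compact, so that one genuinely recovers the \emph{first} $\mathsf{cce}$ cardinal and not merely some $\mathsf{cce}$ cardinal---but this is immediate from the strong-compactness lower bound.

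For the second scenario I would appeal to Poveda's theorem \cite{PovAxiomA} (with the alternate proof of Gitman and Osinski \cite{GitOsi}) that it is consistent for the first $\mathsf{cce}$ cardinal to lie strictly above the first supercompact cardinal. Because every supercompact cardinal is strongly compact, the first supercompact is $\geq$ the first strongly compact; chaining the two inequalities places the first $\mathsf{cce}$ cardinal strictly above the first strongly compact, which is exactly the second case.

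There is essentially no substantive obstacle: the corollary is a bookkeeping combination of Theorem~\ref{ccenonextendible} with the cited consistency result, mediated throughout by the strong-compactness lower bound on $\mathsf{cce}$ cardinals. The content lies entirely in those two prior theorems, and the corollary simply records that the position of the first $\mathsf{cce}$ cardinal relative to the first strongly compact is not decided by $\mathsf{ZFC}$ together with the large-cardinal hypotheses in play.
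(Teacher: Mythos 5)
Your proposal is correct and follows essentially the same route as the paper: the equality case is read off from Theorem~\ref{ccenonextendible} together with the fact that every $\mathsf{cce}$ cardinal is strongly compact, and the strict-inequality case is imported from the Goldberg--Poveda result in \cite{PovAxiomA} that the first $\mathsf{cce}$ cardinal can exceed the first supercompact. Your write-up merely makes explicit two steps the paper leaves implicit (that the strong-compactness lower bound turns ``the first strongly compact is $\mathsf{cce}$'' into ``the first $\mathsf{cce}$ is the first strongly compact'', and the chaining through the first supercompact), which is a faithful elaboration rather than a different argument.
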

 \begin{proof}
     The consistency of the first configuration has just been proved. The second follows from results of Goldberg and Poveda \cite[\S5]{PovAxiomA}.
 \end{proof}
 The crux of the matter of the previous argument is Theorem~\ref{thm:CCEandHOD}. This was proved under the assumption that $\HOD$ is cardinal-correct, which is stronger than asserting the $\HOD$ hypothesis. Therefore, we ask:
\begin{question}
    Assume the $\HOD$ hypothesis. If $\delta$ is $\textsf{cce}$ must it be so in $\HOD$?
\end{question}
Goldberg and Poveda conjecture that the answer is negative. 

\smallskip

Another question highlighted by our analysis is the following:
\begin{question}[Gitman--Osinski]
  Is it consistent for the first $\textsf{cce}$ to be the first measurable cardinal?
\end{question}

\section{On the HOD Dichotomy for supercompact cardinals}\label{sec: on the HOD dichotomy for supercompact}
In this section we analyze the optimality of the version of the HOD Dichotomy theorem proved by the first author in \cite{Gol}.  First, in \S\ref{sec: where weak covering takes a hold} we analyze the optimality of the hypothesis of Goldberg's theorem.  Specifically, we construct a model where
$$\text{$``\delta$ is the first supercompact + $\HOD$ hypothesis''}$$
and there is  a club of cardinals $\kappa<\delta$ such that $$\text{$\kappa$ is regular in $\HOD$ and }\kappa^{+\HOD}<\kappa^+.$$
This shows that the $\HOD$ dichotomy theorem cannot be proved from any large cardinal hypothesis rank reflected by a supercompact cardinal. For instance, it does not follow from a proper class of strong cardinals, etc. In addition, we answer a question due to Cummings, Friedman and Golshani  \cite{CumFriGol} relative to failure of weak covering and supercompact cardinals.

\smallskip

Next, in \S\ref{sec: where omega strongly} we examine the optimality of the conclusion of Goldberg's version of the $\HOD$ dichotomy theorem. Goldberg's result implies that if $\delta$ is supercompact and the $\HOD$ hypothesis fails, then all sufficiently large regular cardinals are $\omega$-strongly measurable in $\HOD$. We show that this cannot be improved to the conclusion that every regular cardinal greater than or equal to $\delta$ is $\omega$-strongly measurable in $\HOD$. Moreover, we show that for each cardinal $\lambda\geq \delta$, there is a set-forcing extension in which $\delta$ remains supercompact and the power-set of $\lambda$ is contained in $\HOD$. In particular, since the failure of the $\HOD$ hypothesis is preserved by set-sized forcing, we obtain a model where the $\HOD$ hypothesis fails, $\delta$ is supercompact and $\eta_0>\lambda$ where $\eta_0$ is defined as follows:
    $$\eta_0:=\min\{\theta\geq \delta\mid \text{$\theta$ is $\omega$-strongly measurable in $\HOD$}\}.$$
    
    Note that by the $\HOD$ dichotomy proved by Goldberg in \cite{Gol}, every regular cardinal above $\eta_0$ must be $\omega$-strongly measurable in $\HOD$. 

\subsection{Where weak covering takes hold}\label{sec: where weak covering takes a hold} We begin this section 
addressing the following question from Cummings et al. {\cite[p.31]{CumFriGol}}:

\begin{question*}[Cummings--Friedman--Golshani]
Is it consistent for $\delta$ to be supercompact and $\kappa^{+\HOD}<\kappa^+$ for all $\kappa<\delta$?    
\end{question*}

We argue that, at least under the $\HOD$ hypothesis,  there must be \emph{many} agreements between $V$ and $\HOD$ on singular cardinals and their successors. (In \cite{CumFriGol}, the authors mention that Woodin conjectured a negative answer to their question, granting the $\HOD$ conjecture. Thus, it seems likely to us that Woodin was aware of the result we are about to prove.)

We shall employ the following lemma, which is a slight fix of \cite[Theorem~4.1]{Cheng}.\footnote{In \cite[Theorem~4.1]{Cheng}  it is claimed that $j_2(N\cap V_{\lambda_2})=N\cap V_\lambda,$ but this does not hold. For instance, if $N:=\HOD\cap V_\delta$ then $\delta_2\in N\cap V_{\lambda_2}$ and $j_2(\delta_2)=\delta$ yet $\delta\notin N\cap V_{\lambda}=N.$}

\begin{lemma}\label{lemma: Magidor supercompactness}
Let $N\s V_\delta$. The following assertions are equivalent:
\begin{enumerate}
    \item $\delta$ is supercompact.
    \item For each $\lambda>\delta$  and $a\in V_\lambda$ there are elementary embeddings
    $$j_1\colon V_{\lambda_1+1}\rightarrow V_{\lambda_2+1}\;\text{and}\; j_2\colon V_{\lambda_2+1}\rightarrow V_{\lambda+1}$$
    and sets $a_1$ and $a_2$ such that:
\begin{enumerate}
    \item $\crit(j_1)=\delta_1$, $\crit(j_2)=\delta_2$, $j_1(\delta_1)=\delta_2$, $j_2(\delta_2)=\delta$.
    \item $a_1\in V_{\delta_1}$, $a_2\in V_{\delta_2}$, $j_1(a_1)=a_2$ and $j_2(a_2)=a$.
    \item $j_1(N\cap V_{\lambda_1})=N\cap V_{\lambda_2}$.\qed
\end{enumerate}
\end{enumerate}
\end{lemma}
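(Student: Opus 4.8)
The plan is to establish the two directions separately; the forward direction \((1)\Rightarrow(2)\) is the substantive one.

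For \((2)\Rightarrow(1)\), clause~(c) is not needed and only the top embedding \(j_2\) is used. I would run the standard Magidor recovery: given a target \(\gamma\), apply~(2) with a sufficiently large \(\lambda\) to obtain \(j_2\colon V_{\lambda_2+1}\to V_{\lambda+1}\) with \(\crit(j_2)=\delta_2\) and \(j_2(\delta_2)=\delta\). For \(\bar\gamma<\lambda_2\) the seed \(s:=j_2``\bar\gamma\) has size \(<\delta\), so the usual computation shows that \(U:=\{X\in\mathcal P(\mathcal P_{\delta_2}(\bar\gamma)): s\in j_2(X)\}\) is a normal fine \(\delta_2\)-complete ultrafilter witnessing, \emph{inside} \(V_{\lambda_2+1}\), that \(\delta_2\) is \(\bar\gamma\)-supercompact (note \(U\in V_{\lambda_2+1}\) by a rank computation). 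Applying the elementary map \(j_2\) to this witness, and using that \(V_{\lambda+1}\) correctly computes supercompactness below \(\lambda\), yields that \(\delta\) is \(j_2(\bar\gamma)\)-supercompact in \(V\). Letting \(\bar\gamma\) and \(\lambda\) grow shows \(\delta\) is supercompact.

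For \((1)\Rightarrow(2)\), fix \(\lambda>\delta\) and \(a\). The plan is to witness the two-step configuration one supercompactness embedding above and then reflect it down. Choose \(\pi\colon V\to M\) with \(\crit(\pi)=\delta\), \(\pi(\delta)>\lambda\), and closure high enough (say \(M^{\theta}\subseteq M\) for \(\theta>2^{2^{\lambda}}\)) that \(\delta\) remains \(\lambda\)-supercompact in \(M\) and \(e:=\pi\restriction V_{\lambda+1}\in M\). Then \(e\colon V_{\lambda+1}\to V^{M}_{\pi(\lambda)+1}\) is elementary with \(\crit(e)=\delta\) and \(e(\delta)=\pi(\delta)\); this \(e\) will play the role of the \emph{top} embedding \(h_2\) of a two-step configuration computed \emph{in} \(M\) for the parameters \((\pi(\delta),\pi(\lambda),\pi(a),\pi(N))\). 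For the \emph{lower} embedding I would apply Magidor's small-embedding characterization to \(\delta\) inside \(M\)---legitimate since \(\delta\) is \(\lambda\)-supercompact there---relative to the parameter \(\pi(N)\cap V_\lambda\), producing \(h_1\colon V_{\mu_1+1}\to V_{\lambda+1}\) with \(\crit(h_1)=\epsilon_1\), \(h_1(\epsilon_1)=\delta\), and \(h_1(\pi(N)\cap V_{\mu_1})=\pi(N)\cap V_\lambda\). The pair \((h_1,e)\) then witnesses the two-step configuration for \((\pi(\delta),\pi(\lambda),\pi(a),\pi(N))\) in \(M\); since this configuration is a first-order property with the \emph{set} parameter \(\pi(N)\), pulling it back through the elementarity of \(\pi\) produces embeddings \(j_1,j_2\) in \(V\) realizing clauses~(a)--(c) for \((\delta,\lambda,a,N)\).

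The main obstacle is clause~(c), and it is exactly the point the footnote corrects. Coherence with the \emph{global} set \(N\) can be demanded only of the lower embedding \(j_1\): its critical point maps to \(\delta_2<\delta\), an ordinal that lies in \(N\) whenever \(N\) contains its ordinals (as for \(N=\HOD\cap V_\delta\)), so no contradiction arises; the top embedding \(j_2\) sends its critical point to \(\delta\notin N\), hence cannot---and must not be required to---preserve \(N\). Concretely, the delicate step is arranging that \(h_1\) respects the \emph{global} predicate \(\pi(N)\) (so that the preimage of \(\pi(N)\cap V_\lambda\) is genuinely \(\pi(N)\cap V_{\mu_1}\), not merely an abstract pullback). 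I would secure this by taking \(h_1\) to descend from an honest \(\lambda\)-supercompactness embedding of \(M\): such an embedding, being a restriction of a genuine elementary map, automatically preserves membership in \(\pi(N)\) in both directions above its critical point, and this property survives the internal Magidor reflection that produces \(h_1\).
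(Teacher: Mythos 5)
The paper never writes out a proof of this lemma: it is quoted as a corrected form of Cheng's Theorem~4.1, and the remark immediately following it discloses the intended mechanism --- the whole two-step configuration is produced inside the \emph{second} iterated ultrapower $j_{02}=j_{12}\circ j_{01}$ of a normal fine measure on $\mathcal P_\delta(|V_{\lambda+1}|)$ and then pulled back through $j_{02}$. Measured against that, your direction $(2)\Rightarrow(1)$ follows the standard Magidor recovery and is essentially fine, except that the last sentence (``letting $\bar\gamma$ and $\lambda$ grow'') papers over a real step: nothing guarantees that the ordinals $j_2(\bar\gamma)$, $\bar\gamma<\lambda_2$, reach any prescribed target $\gamma$. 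The repair uses exactly the clause you never touch: apply (2) with $a:=\gamma$, so that clause (b) (read, as the paper's own application of the lemma forces, with $a_i\in V_{\lambda_i}$ rather than $V_{\delta_i}$) yields an ordinal $a_2<\lambda_2$ with $j_2(a_2)=\gamma$; alternatively run the least-counterexample reflection from Magidor's original proof.

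The genuine gap is in $(1)\Rightarrow(2)$, at the production of the lower embedding $h_1$. You correctly isolate the obstruction (the top embedding cannot be $N$-coherent; the paper's footnote makes the same point), but your resolution does not close it. ``Magidor's small-embedding characterization relative to the parameter $\pi(N)\cap V_\lambda$'' is not a citable result: Magidor's theorem carries no predicate clause, and the statement you need --- $h_1(W\cap V_{\mu_1})=W\cap V_\lambda$ for $W:=\pi(N)\cap V_\lambda$ --- is essentially the lemma under proof and is \emph{false} for a general predicate $W\subseteq V_\lambda$ (take $W=\{\delta\}$: any $h_1$ with $h_1(\varepsilon_1)=\delta$ has $h_1(\delta)>\delta$, so $h_1(W\cap V_{\mu_1})\neq W$ whichever side of $\mu_1$ the ordinal $\delta$ lies on; note your $W$ does contain $\delta$). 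Your justification --- that an $h_1$ descending from a restriction of an honest elementary $\sigma$ of $M$ ``automatically preserves membership in $\pi(N)$ in both directions'' --- is not true: elementarity gives $x\in\pi(N)\iff\sigma(x)\in\sigma(\pi(N))$, which relates $\pi(N)$ to $\sigma(\pi(N))$, never to $\pi(N)$ itself. What the internal reflection actually needs from $\sigma$ is the coherence $\sigma(\pi(N)\cap V_\lambda)\cap V_\lambda=\pi(N)\cap V_\lambda$, which fails for an arbitrary $\lambda$-supercompactness embedding of $M$; it holds for special $\sigma$, e.g.\ the ultrapower of $M$ by the measure derived from $\pi$ itself, where it follows from the second-iterate identity $j_{12}(j_{01}(N))=j_{01}(j_{01}(N))$.

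That identity is the engine of the intended proof, and it also dictates the opposite role assignment to yours: in the configuration exhibited inside $M_2=\Ult(M_1,j_{01}(U))$, the \emph{lower} embedding is $j_{01}\restriction V_{\lambda+1}$ and the \emph{top} one is $j_{12}\restriction V^{M_1}_{j_{01}(\lambda)+1}$; clause (c) for the lower embedding is then the computation $j_{01}\bigl(j_{02}(N)\cap V_\lambda\bigr)=j_{01}(j_{01}(N))\cap V_{j_{01}(\lambda)}=j_{02}(N)\cap V_{j_{01}(\lambda)}$, and one finishes by pulling the pair back through $j_{02}$. By insisting that $\pi\restriction V_{\lambda+1}$ sit on top, you force yourself to conjure a lower embedding with full predicate coherence out of nothing, and that step, as written, cannot be completed by a routine argument.
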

\begin{remark}
   For each triple $\langle a,\lambda, \delta_1\rangle$, let us denote by $\varphi(a,\lambda,\delta_1)$ the formula asserting the existence of embeddings $j_1$ and $j_2$, cardinals $\lambda_1$, $\lambda_2$, $\delta_2$ and sets $a_1$ and $a_2$ as in Clause~(2) above. One can show that $\{\delta_1<\delta: \varphi(a,\lambda,\delta_1)\}$ is stationary in $\delta$. This is because this  belongs to the normal measure on $\delta$ induced by the second ultrapower of a normal fine measure on $\mathcal{P}_\delta(|V_{\lambda+1}|).$
\end{remark}

It is well-known  that if $N$ is a weak extender model for $\delta$  supercompact then every singular cardinal $\gamma>\delta$ is singular in $N$ and $(\gamma^+)^{\HOD}=\gamma^+$ (\cite[\S2.2]{midrasha}). A close inspection of  Lemmas~2.8, 2.9 and 2.10 in \cite{midrasha} makes clear that this can be obtained level-by-level. Specifically:
\begin{lemma}\label{lemma: covering}
    Let $\delta<\sigma$ be cardinals with $\cf(\sigma)<\sigma$. If $N$ is a weak extender model for $\delta$ is $\sigma$-supercompact then $\sigma$ is singular in $N$ and $(\sigma^+)^N=\sigma^+$.\qed
\end{lemma}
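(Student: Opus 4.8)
The plan is to unwind Woodin's argument from \cite{midrasha} one supercompactness level at a time. Fix a normal fine $\delta$-complete ultrafilter $\mathcal{U}$ on $\mathcal{P}_\delta(\sigma)$ witnessing that $N$ is a weak extender model at this level, so that $U:=\mathcal{U}\cap N\in N$, and let $j:=j_{\mathcal{U}}\colon V\to M=\Ult(V,\mathcal{U})$ with $M^\sigma\s M$. The first step I would carry out is the identification of the external and internal ultrapowers: since $U\in N$, the map $j\restriction N$ agrees with the ultrapower map $j_U^N\colon N\to N^*:=\Ult(N,U)$ computed inside $N$, and $N^*=j(N)$. Two consequences are immediate and get used repeatedly: $N^*$ is closed under $\sigma$-sequences lying in $N$, and---by fineness of $U$---the seed $s:=j``\sigma=j_U^N``\sigma$ belongs to $N^*$ with $|s|^{N^*}<j(\delta)$. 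All of this is definable over $N$ from the parameter $U$.

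For the singularity of $\sigma$ in $N$ I would split on $\cf(\sigma)$. If $\cf^V(\sigma)<\delta$ the conclusion is immediate from the $\delta$-cover property of $N$ (which holds for every weak extender model, cf. \cite{midrasha,WooPartI}): cover a cofinal subset of $\sigma$ of size $<\delta$ by a set $\tau\in N$ of size $<\delta$, whence $\cf^N(\sigma)<\delta<\sigma$. For larger $V$-cofinality I would instead feed the singularity of $\sigma$ through $j$: by elementarity $\cf^M(j(\sigma))=j(\cf^V(\sigma))$, and comparing this with the cofinality that $N^*$ assigns to $j(\sigma)=j_U^N(\sigma)$ forces $\cf^N(\sigma)<\sigma$. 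This is exactly the cofinality-preservation half of Woodin's analysis, and it is where one must be careful to invoke the measure at a high enough level, so that in the level-by-level bookkeeping one takes $\mathcal{U}$ reaching slightly past $\sigma$.

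The hard part is $(\sigma^+)^N=\sigma^+$. Since $N\s V$ we trivially have $(\sigma^+)^N\le\sigma^+$, so the content is to rule out $\eta:=(\sigma^+)^N<\sigma^+$. The natural attempt is to note that $\eta$ is regular in $N$ of $N$-cofinality $>\sigma$, so $j_U^N$ is continuous at $\eta$ and $\sup j``\eta=j_U^N(\eta)=((j(\sigma))^+)^{N^*}$, while $|\eta|^V=\sigma$ makes $j``\eta$ an $M$-set of size $\sigma$ cofinal in $j_U^N(\eta)$, so that $M$ singularizes an $N^*$-regular cardinal. The obstacle---which I expect to be the crux of the whole lemma---is that turning this into a contradiction requires knowing that $M$ and $N^*=j(N)$ agree on the successor of $j(\sigma)$, which is precisely the $j$-image of the statement being proved; hence the one-measure computation is circular. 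Breaking the circle is the real work of Woodin's Lemmas~2.8--2.10 in \cite{midrasha}: the successor agreement is extracted from the $\delta$-approximation property of $N$ together with the internal representation of $j\restriction N$ (and may alternatively be packaged through the reflecting double ultrapower of Lemma~\ref{lemma: Magidor supercompactness}), rather than from a single embedding. My plan is therefore to verify that each of those three lemmas survives when $\mathcal{U}$ is taken only at the level of $\sigma$---the sense in which the argument is ``level-by-level''---and to assemble them exactly as in the full-supercompactness case to conclude both that $\sigma$ is singular in $N$ and that $(\sigma^+)^N=\sigma^+$.
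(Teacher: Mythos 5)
There is a genuine gap. First, a point of comparison: the paper itself does not write out a proof of this lemma (note the \qed in the statement); its ``proof'' is the assertion that Woodin's Lemmas~2.8--2.10 in \cite{midrasha} localize to the level of a single witnessing measure. So the benchmark for your attempt is whether you actually carry out that localization, and you do not: for the hard half, $(\sigma^+)^N=\sigma^+$, your text ends by deferring to ``verify that each of those three lemmas survives when $\mathcal{U}$ is taken only at the level of $\sigma$,'' which is the entire content of the statement. Worse, your own parenthetical remark that the cofinality step requires ``the measure reaching slightly past $\sigma$'' is in direct tension with the hypothesis: the lemma only supplies measures on $\mathcal{P}_\delta(\sigma')$ for $\sigma'\le\sigma$, whereas the natural localization of Woodin's successor argument is run with a witnessing measure on $\mathcal{P}_\delta(\eta)$ for $\eta=(\sigma^+)^N$, which lies strictly above $\sigma$. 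Whether and how one gets by with measures only up to $\sigma$ is exactly the nontrivial point of the ``close inspection,'' and it is left unresolved in your proposal.

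Second, two of the steps you do assert are incorrect or unjustified. (a) The ``identification'' $j\restriction N=j_U^N$ and $\Ult(N,U)=j(N)$ is not a consequence of $U=\mathcal{U}\cap N\in N$. What $U\in N$ gives is an elementary factor embedding $k\colon \Ult(N,U)\to j(N)$ with $k\circ j_U^N=j\restriction N$ (defined by $k([f]_U)=[f]_{\mathcal{U}}$ for $f\in N$); surjectivity of $k$ amounts to the claim that every $V$-function $\mathcal{P}_\delta(\sigma)\to N$ is $\mathcal{U}$-equivalent to a function in $N$, which is a substantive assertion needing proof, not a definition-chase. (b) The continuity claim ``$\eta$ has $N$-cofinality $>\sigma$, so $j_U^N$ is continuous at $\eta$'' fails precisely in the case of interest: once $\sigma$ is singular in $N$ with $\cf^N(\sigma)<\delta$ (which is what part~1 yields when, e.g., $\cf(\sigma)=\omega$), $N$ computes $\sigma^{<\delta}\ge\sigma^+=\eta$, so the internal ultrapower is by a measure on an index set of $N$-cardinality $\ge\eta$ and functions $\mathcal{P}_\delta(\sigma)^N\to\eta$ need not be $U$-a.e.\ bounded; the ensuing equation $\sup j``\eta=j_U^N(\eta)=((j(\sigma))^+)^{N^*}$ also conflates the external and internal embeddings. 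What is salvageable and correct in your outline is the easy case of singularity ($\cf^V(\sigma)<\delta$ via the $\delta$-cover property) and your diagnosis of the circularity obstacle in the naive successor computation; but the proof of the lemma is not there.
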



\begin{theorem}\label{thm: many instances of covering}Assume the $\HOD$ hypothesis holds. If $\delta$ is a supercompact cardinal then there is a stationary set $S\s \delta$ such that the $\omega$th-successor of each point in $S$ is singular in $\HOD$ and $\HOD$ is correct about its successor.
\end{theorem}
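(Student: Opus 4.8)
The plan is to produce the stationary set $S$ as (a subset of) the reflection points supplied by the Magidor--Cheng characterization of supercompactness (Lemma~\ref{lemma: Magidor supercompactness}), and to verify weak covering at their $\omega$th successors by exhibiting, at each such point, a supercompactness measure that is amenable to $\HOD$. First I would reduce the theorem to the following local statement: for stationarily many $\kappa<\delta$ there is a normal fine $\kappa$-complete ultrafilter on $\mathcal P_\kappa(\kappa^{+\omega})$ whose trace on $\HOD$ lies in $\HOD$ and which concentrates on $\mathcal P_\kappa(\kappa^{+\omega})\cap\HOD$; that is, $\HOD$ is a weak extender model for ``$\kappa$ is $\kappa^{+\omega}$-supercompact''. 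Granting this, Lemma~\ref{lemma: covering} (applied with $\delta\rightsquigarrow\kappa$ and $\sigma\rightsquigarrow\kappa^{+\omega}$, noting $\cf(\kappa^{+\omega})=\omega<\kappa^{+\omega}$) immediately gives that $\kappa^{+\omega}$ is singular in $\HOD$ and that $(\kappa^{+\omega+1})^{\HOD}=\kappa^{+\omega+1}$, which is exactly the conclusion sought for $\kappa\in S$. (Singularity in $\HOD$ is in any case automatic, since $\cf(\kappa^{+\omega})=\omega$.)

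To manufacture the measures I would apply Lemma~\ref{lemma: Magidor supercompactness} with $N:=\HOD\cap V_\delta$. Fixing a sufficiently large $\Sigma_2$-correct $\lambda$ and a parameter $a$ so arranged that the reflected ordinal $\lambda_1$ exceeds $\delta_1^{+\omega+1}$ and $\delta_2>\delta_1^{+\omega}$, I obtain for stationarily many $\delta_1<\delta$ --- this is where the Remark following the lemma is used --- elementary embeddings $j_1\colon V_{\lambda_1+1}\to V_{\lambda_2+1}$ and $j_2\colon V_{\lambda_2+1}\to V_{\lambda+1}$ with $\crit(j_1)=\delta_1$, $j_1(\delta_1)=\delta_2$ and, crucially, $j_1(N\cap V_{\lambda_1})=N\cap V_{\lambda_2}$. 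From $j_1$ I read off the seed $s:=j_1[\delta_1^{+\omega}]$ and set $\mathcal U:=\{Z\subseteq\mathcal P_{\delta_1}(\delta_1^{+\omega}):s\in j_1(Z)\}$, a normal fine $\delta_1$-complete ultrafilter on $\mathcal P_{\delta_1}(\delta_1^{+\omega})$ witnessing that $\delta_1$ is $\delta_1^{+\omega}$-supercompact in $V$. The set $S$ will be the collection of these $\delta_1$ (intersected with a suitable club on which the construction goes through).

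The crux --- and the step I expect to be the main obstacle --- is to show that $\mathcal U$ is amenable to $\HOD$, i.e. that $\mathcal U\cap\HOD\in\HOD$. The whole point of passing to $N=\HOD\cap V_\delta$ and of insisting on the tracking clause $j_1(N\cap V_{\lambda_1})=N\cap V_{\lambda_2}$ is precisely that $j_1$ then restricts to an elementary embedding $\bar\jmath\colon\HOD\cap V_{\lambda_1}\to\HOD\cap V_{\lambda_2}$ of $\HOD$-structures (recall $N\cap V_{\lambda_i}=\HOD\cap V_{\lambda_i}$ since $\lambda_i<\delta$); in particular $j_1(Z)\in\HOD$ whenever $Z\in\HOD\cap V_{\lambda_1}$, so membership in $\mathcal U$ is decided entirely among $\HOD$-objects. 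It then remains to see that $\bar\jmath$ --- equivalently, the $\HOD$-extender of length $\delta_1^{+\omega}$ with critical point $\delta_1$ derived from $j_1$ --- lies in $\HOD$. This I would obtain from Woodin's Universality Theorem~\ref{UniversalityI}, using that, by Goldberg's version of the $\HOD$ dichotomy for strongly compact cardinals \cite{Gol} (which applies since $\delta$ is supercompact, hence strongly compact), $\HOD$ enjoys the relevant covering and approximation properties below $\delta$ (cf. the proof of Theorem~\ref{HODabsorbsC(1)supercompacts}). One must be careful that $j_2$ does \emph{not} track $N$ --- this is the content of the footnote correcting \cite{Cheng} --- so all of the $\HOD$-bookkeeping has to be carried out through $j_1$ alone, with $j_2$ serving only to guarantee, via the Remark, that the requisite $\delta_1$ occur stationarily and genuinely reflect the supercompactness of $\delta$.

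Finally, with amenability in hand, $\mathcal U\cap\HOD$ witnesses that $\HOD$ is a weak extender model for ``$\delta_1$ is $\delta_1^{+\omega}$-supercompact'', so by the reduction of the first paragraph weak covering holds at $\delta_1^{+\omega}$ for every $\delta_1\in S$. As $S$ is stationary in $\delta$, the singular cardinals $\delta_1^{+\omega}$ ($\delta_1\in S$) are unbounded below $\delta$, each is singular in $\HOD$, and $\HOD$ computes its successor correctly, as required. This complements Theorem~\ref{thm: extendingCumFriGol}, in which weak covering is instead made to fail on a club of $\HOD$-\emph{regular} cardinals.
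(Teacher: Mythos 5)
Your skeleton matches the paper's: reduce to showing that $\HOD$ is a weak extender model for ``$\delta_1$ is $\delta_1^{+\omega}$-supercompact'' for stationarily many $\delta_1<\delta$, apply Lemma~\ref{lemma: covering}, and produce the candidate measures from Lemma~\ref{lemma: Magidor supercompactness} with $N=\HOD\cap V_\delta$, using the tracking clause $j_1(N\cap V_{\lambda_1})=N\cap V_{\lambda_2}$ and the seed $j_1``\delta_1^{+\omega}$. The gap is at the step you yourself flag as the crux. You propose to get the derived $\HOD$-extender into $\HOD$ via Woodin's Universality Theorem~\ref{UniversalityI}, claiming that by \cite{Gol} ``$\HOD$ enjoys the relevant covering and approximation properties below $\delta$.'' But Theorem~\ref{UniversalityI} requires \emph{both} the $\gamma$-cover and the $\gamma$-approximation property for some regular $\gamma\leq\delta_1=\crit(E)$, and under the present hypotheses (HOD hypothesis plus $\delta$ supercompact) only the cover property is available: what \cite{Gol} yields --- and what the proof of Theorem~\ref{HODabsorbsC(1)supercompacts} actually cites from it --- is the $\gamma$-cover property for sufficiently large strong limit $\gamma<\delta$. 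The approximation property in that proof comes instead from Theorem~\ref{thm: WoodinCoveringApprx}, which requires $\delta$ to be $\HOD$-supercompact (there $\delta$ is extendible); mere supercompactness does not provide it, and obtaining approximation-type properties for $\HOD$ from a supercompact is essentially the open problem of pushing the HOD dichotomy below extendibility. Indeed, Theorem~\ref{thm: extendingCumFriGol} of this same paper shows how badly $\HOD$ can behave below the first supercompact, so no such fact can be provable here. Without approximation the universality theorem cannot be invoked, and your amenability argument collapses at its key step.

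The paper replaces this step with a direct definability argument, and this is also exactly where the HOD hypothesis genuinely enters (in your proposal it enters only through the inapplicable appeal to \cite{Gol}). Using the HOD hypothesis, fix a $\beth$-fixed point $\mu>\delta$ and a regular $\kappa>2^\mu$ that is not $\omega$-strongly measurable in $\HOD$, so that there is a partition $\langle S_\alpha:\alpha<\mu\rangle\in\HOD$ of $S^\kappa_\omega$ into stationary sets; this partition is placed into the parameter $a$ of Lemma~\ref{lemma: Magidor supercompactness}. Its reflection $\langle S^2_\alpha:\alpha<\mu_2\rangle$ lands in $\HOD$ (by $\Sigma_2$-correctness of $\lambda$ and elementarity of $j_2$), and the stationary-partition trick then computes
$$j_1``\mu_1=\{\alpha<\mu_2: S^2_\alpha\cap\sup (j_1``\kappa_1)\ \text{is stationary}\}$$
inside $\HOD$. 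Combining this with $j_1(f)\in\HOD$, where $f\colon\mu_1\rightarrow\HOD\cap V_{\mu_1}$ is a bijection in $\HOD$ (here the tracking clause is used, since $j_1(f)\in j_1(\HOD\cap V_{\lambda_1})=\HOD\cap V_{\lambda_2}$), one gets $j_1\restriction(\HOD\cap V_{\mu_1})\in\HOD$, hence $\mathcal U\cap\HOD\in\HOD$ and $\mathcal P_{\delta_1}(\delta_1^{+\omega})\cap\HOD\in\mathcal U$, with no appeal to any approximation property. So your reduction and your use of the Magidor--Cheng lemma are correct, but the amenability step needs this coding argument rather than the universality machinery.
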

\begin{proof}
Let $\delta<\mu<\kappa$ be cardinals that are (respectively) a $\beth$-fixed point and a regular non-$\omega$-strongly measurable in HOD above $2^\mu$. (This choice is possible by virtue of the HOD hypothesis.) Thus, there is a stationary partition  $\langle S_\alpha: \alpha<\mu\rangle\in \HOD$  of $E^\kappa_\omega$.
Set $a:=\{\kappa,\mu, \langle S_\alpha: \alpha<\mu\rangle\}$ and let $\lambda>\mu$ be a $\Sigma_2$-correct cardinal.  Applying Lemma~\ref{lemma: Magidor supercompactness} (and the remark following it) to $\langle \lambda,a\rangle$ we get a stationary set $S\s \delta$ (which depends upon $\langle \lambda, a\rangle$) such that for each $\delta_1\in S$ there are  elementary embeddings
$$j_1\colon V_{\lambda_1+1}\rightarrow V_{\lambda_2+1}\;\text{and}\; j_2\colon V_{\lambda_2+1}\rightarrow V_{\lambda+1}$$
with $\crit(j_1)=\delta_1$, $\crit(j_2)=\delta_2$, $j_1(\delta_1)=\delta_2$ and $j_2(\delta_2)=\delta$ and
$$j_1(\HOD\cap V_{\lambda_1})=\HOD\cap V_{\lambda_2}.$$

These embeddings also do come together with respective \emph{reflections} of the members of $a$; namely, $\langle \kappa_1, \mu_1, \langle S^1_\alpha: \alpha<\mu_1\rangle\rangle$ and $\langle \kappa_2, \mu_2, \langle S^2_\alpha: \alpha<\mu_2\rangle\rangle$.

\smallskip

Since $\langle S_\alpha: \alpha<\mu\rangle\in \HOD\cap V_\lambda=\HOD^{V_\lambda}$ (by $\Sigma_2$-correctness of $\lambda$) elementarity of $j_2$ implies that $\langle S^2_\alpha: \alpha<\mu_2\rangle\in \HOD^{V_{\lambda_2}}\s \HOD$.

Next look at the normal measure on $\mathcal{P}_{\delta_1}(\delta_1^{+\omega})$ induced by $j_1$; namely,
$$\mathcal{U}:=\{X\s \mathcal{P}_{\delta_1}(\delta_1^{+\omega}): j_1``\delta_1^{+\omega}\in j_1(X)\}.$$

\begin{claim}
    $\mathcal{U}\cap \HOD\in \HOD$ and $\mathcal{P}_{\delta_1}(\delta_1^{+\omega})\cap \HOD\in \mathcal{U}$. 
\end{claim}
\begin{proof}[Proof of claim]
Clearly, $\mathcal{U}\cap \HOD$ is definable via $j_1``\delta_1^{+\omega}$ and $j_1\restriction \HOD\cap V_{\mu_1}$, so $\mathcal{U}\cap \HOD\in \HOD$ as long as the aforementioned objects belong to $\HOD$.

On the one hand,
$$j_1``\mu_1=\{\alpha<\mu_2: S^2_\alpha\cap (\sup j_1``\kappa_1)\; \text{is stationary}\}\in \HOD$$
in that  $\langle S^2_\alpha:\alpha<\mu_2\rangle\in \HOD.$ In particular, $j_1``\delta_1^{+\omega}\in \HOD\cap V_{\lambda_2}$.

On the other hand, fix $f\colon \mu_1\rightarrow \HOD\cap V_{\mu_1}$  a bijection in $\HOD$ (this is possible because $\mu_1=\beth_{\mu_1}$). In fact, $f\in \HOD\cap V_{\lambda_1}$ (as $\lambda_1$ is strong limit) and thus $j_1(f)\in \HOD\cap V_{\lambda_2}\s \HOD$.  Since $j_1\restriction \HOD\cap V_{\mu_1}$ is definable via $j_1``\mu_1$ and $j_1(f)$ one concludes that it belongs to $\HOD$.

The above shows that $\mathcal{U}\cap \HOD\in \HOD$. Finally note that $\mathcal{P}_{\delta_1}(\delta_1^{+\omega})\cap \HOD\in \mathcal{U}$ because this is equivalent to saying  $\mathcal{P}_{\delta_1}(\delta_1^{+\omega})\cap \HOD\cap V_{\lambda_1}\in \mathcal{U}$ and this latter assertion holds in that
$j_1``\delta_1^{+\omega}\in j_1(\HOD\cap V_{\lambda_1})=\HOD\cap V_{\lambda_2}.$
\end{proof}
The above claim shows that $\HOD$ is a weak extender model for $``\delta_1$ is $\delta_1^{+\omega}$-supercompact'' and therefore $\delta_1^{+\omega}$ is singular in $\HOD$ and its successor is correctly computed (by Lemma~\ref{lemma: covering}). Since we have argued this for each $\delta_1\in S$ the theorem follows.
\end{proof}
Thereby we answer the question in \cite{CumFriGol} in the negative:
\begin{cor}
    If $\delta$ is supercompact and the $\HOD$ hypothesis holds then for unboundedly many cardinals $\kappa$, $\cf^{\HOD}(\kappa)=\cf(\kappa)$ and $\kappa^{+\HOD}=\kappa^+$.
\end{cor}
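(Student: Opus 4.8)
The plan is to read the corollary off Theorem~\ref{thm: many instances of covering} almost verbatim, the only nontrivial point being to sharpen ``singular in $\HOD$'' into genuine cofinality agreement. First I would apply Theorem~\ref{thm: many instances of covering} to the supercompact cardinal $\delta$, obtaining a stationary set $S\s\delta$ such that for every $\delta_1\in S$ the cardinal $\kappa:=\delta_1^{+\omega}$ is singular in $\HOD$ and satisfies $\kappa^{+\HOD}=\kappa^+$. A stationary subset of $\delta$ is in particular unbounded, so $\{\delta_1^{+\omega}:\delta_1\in S\}$ is an unbounded family of cardinals below $\delta$; this supplies the required unboundedly many witnesses $\kappa$, and the clause $\kappa^{+\HOD}=\kappa^+$ is handed to us directly by the theorem.

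It then remains to verify the cofinality clause $\cf^{\HOD}(\kappa)=\cf(\kappa)$. Here the key is that the proof of Theorem~\ref{thm: many instances of covering} establishes something stronger than bare singularity: it shows that $\HOD$ is a weak extender model for ``$\delta_1$ is $\delta_1^{+\omega}$-supercompact''. For such models Woodin's covering analysis---the level-by-level refinement of Lemmas~2.8--2.10 of \cite{midrasha} that also underlies Lemma~\ref{lemma: covering}---yields cofinality agreement, and not merely singularity, at every singular cardinal in the relevant range. Applying this at $\sigma=\kappa$ and using that $\cf(\kappa)=\omega$ because $\kappa=\delta_1^{+\omega}$ is an $\omega$-limit of cardinals, I would conclude $\cf^{\HOD}(\kappa)=\omega=\cf(\kappa)$, finishing the argument.

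If one prefers to avoid invoking the full cofinality-agreement statement, the same conclusion can be reached by hand. Since $\HOD\s V$, cofinalities never drop when recomputed in $\HOD$, so $\cf^{\HOD}(\kappa)\geq\cf(\kappa)=\omega$; for the reverse inequality it suffices to exhibit a cofinal $\omega$-sequence in $\kappa$ lying in $\HOD$. The weak extender model property gives successor agreement $(\lambda^+)^{\HOD}=\lambda^+$ for every cardinal $\lambda\geq\delta_1$ below the level of supercompactness, and inducting on $n$ this yields $(\delta_1^{+n})^{\HOD}=\delta_1^{+n}$ for all $n<\omega$. Hence the canonical sequence $\langle(\delta_1^{+n})^{\HOD}:n<\omega\rangle=\langle\delta_1^{+n}:n<\omega\rangle$ belongs to $\HOD$ and is cofinal in $\kappa$, whence $\cf^{\HOD}(\kappa)=\omega$.

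The main obstacle I anticipate is precisely this gap between the two formulations: Theorem~\ref{thm: many instances of covering} literally asserts only that $\kappa$ is singular in $\HOD$, whereas the corollary demands $\cf^{\HOD}(\kappa)=\omega$, and a priori a cardinal singular in $\HOD$ could carry an uncountable $\HOD$-cofinality. Closing this gap forces an appeal to the covering behaviour of the weak extender model---either the cofinality-agreement theorem or the agreement of the finite successors $\delta_1^{+n}$---rather than singularity alone; everything else in the corollary is an immediate unpacking of the theorem together with the observation that stationary sets are unbounded.
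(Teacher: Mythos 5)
Your skeleton is the same as the paper's (the corollary is stated there as an immediate consequence of Theorem~\ref{thm: many instances of covering}): take $\kappa=\delta_1^{+\omega}$ for $\delta_1$ in the stationary set $S$, note that stationarity gives unboundedness, and read off $\kappa^{+\HOD}=\kappa^+$. You are also right that the only nontrivial point is upgrading ``singular in $\HOD$'' to $\cf^{\HOD}(\kappa)=\cf(\kappa)$. The problem is that both of your justifications for this step appeal to principles that the paper never proves and that are in fact false for weak extender models. First, there is no ``cofinality agreement'' form of Lemma~\ref{lemma: covering}: a weak extender model for $\delta$ supercompact need not compute cofinalities above $\delta$ correctly. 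For instance, if $\kappa<\delta$ is measurable in $N$, $\delta$ is supercompact in $N$, and $V=N[G]$ is a Prikry extension at $\kappa$, then by L\'evy--Solovay $N$ remains a weak extender model for $\delta$ supercompact in $V$, yet $\gamma=\delta^{+\kappa}$ satisfies $\cf^{N}(\gamma)=\kappa>\omega=\cf^{V}(\gamma)$. Second, your fallback argument needs $(\delta_1^{+n})^{\HOD}=\delta_1^{+n}$ for all $n<\omega$, i.e.\ successor correctness at \emph{regular} cardinals; Lemma~\ref{lemma: covering} (and Woodin's lemmas behind it) give successor correctness only at \emph{singular} cardinals, and at regulars it can genuinely fail: forcing over $N$, with $\delta$ Laver-indestructible, by the $\lambda$-directed-closed collapse $\Col(\lambda,(\lambda^{+})^{N})$ for a regular $\lambda>\delta$ leaves $N$ a weak extender model for $\delta$ supercompact while $(\lambda^{+})^{N}<\lambda^{+}$. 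Without that input, the $\HOD$-computed sequence $\langle(\delta_1^{+n})^{\HOD}:n<\omega\rangle$ need not be cofinal in $\kappa$, since $\HOD$ may have extra cardinals inside the interval $(\delta_1,\delta_1^{+\omega})$.

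The gap closes by an argument that is special to $\HOD$ rather than to weak extender models (and this is surely why the paper offers no proof): the set $\{\delta_1^{+n}:n<\omega\}$ of $V$-cardinal successors is ordinal definable from the ordinal $\delta_1$, and any ordinal-definable set of ordinals belongs to $\HOD$; it is cofinal in $\kappa=\delta_1^{+\omega}$ and has order type $\omega$, so $\cf^{\HOD}(\kappa)\leq\omega$. Together with $\cf^{\HOD}(\kappa)\geq\cf(\kappa)$, which you did prove correctly, this gives $\cf^{\HOD}(\kappa)=\omega=\cf(\kappa)$. Note that this definability argument is unavailable for a general weak extender model $N$, which is precisely why any route through general covering lemmas, as in your proposal, cannot work.
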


Next we would like to show that Theorem~\ref{thm: many instances of covering} is arguably optimal:

\begin{theorem}\label{thm: extendingCumFriGol}
Suppose that there is a supercompact cardinal and that the $\mathrm{GCH}$ holds. Then, there is a model of $\mathrm{ZFC}$ where the following  hold:
    \begin{enumerate}
        \item The $\HOD$ hypothesis.
        \item $\delta$ is the first supercompact.
        \item There is a club $D\s \delta$ consisting of cardinals $\kappa$ such that:
        \begin{enumerate}
            \item $\kappa^{+\HOD}<\kappa^+$;
            \item $\HOD\models``\kappa$ is regular".
        \end{enumerate}
    \end{enumerate}
\end{theorem}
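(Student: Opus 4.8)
The plan is to adapt the supercompact Radin construction of Cummings, Friedman and Golshani \cite{CumFriGol} and to extract from it the additional information, not recorded there, that the club cardinals are \emph{regular} in $\HOD$. First I would prepare the ground model by a tame class forcing preserving the $\mathrm{GCH}$ and the supercompactness of $\delta$ so that $V=\mathrm{gHOD}$ holds (e.g. via Reitz's Continuum Coding Axiom \cite{Reitz}); this guarantees $V\s \HOD^{V[G]}$ for every set-generic $G$. I would then fix a measure sequence $u\in \mathcal{U}^{\sup}_\infty$ of length ${<}\delta^{++}$ and force with supercompact Radin forcing $\mathbb{R}^{\sup}_u$. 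By the cardinal arithmetic of the prepared model this preserves the supercompactness of $\delta$, and it produces a club $C=\langle \kappa_\alpha:\alpha<\delta\rangle\s\delta$ of former $V$-measurables together with the induced \emph{plain} Radin generic $G^\phi$.

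The homogeneity analysis of \cite[Corollary~4.2]{CumFriGol} gives the sandwich $V\s \HOD^{V[G]}\s V[G^\phi]$, and \cite[Lemma~4.8]{CumFriGol} gives that in $V[G]$ each $\kappa_\alpha$ remains a cardinal while $|(\kappa_\alpha^{+})^V|=\kappa_\alpha$. Since $V\s \HOD^{V[G]}$ we have $(\kappa_\alpha^{+})^{\HOD}\leq (\kappa_\alpha^{+})^{V}<(\kappa_\alpha^{+})^{V[G]}$, which yields clause~(3)(a). For clause~(3)(b)---the genuinely new point---I would choose $u$ so that club-many points $\kappa_\alpha$ carry induced Radin measure sequences of length ${\geq}2$; for such points $\kappa_\alpha$ is inaccessible in the plain Radin extension $V[G^\phi]$. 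Because $\HOD^{V[G]}\s V[G^\phi]$ and regularity is inherited by inner models (an inner model cannot omit a cofinalizing sequence present in the larger model), each such $\kappa_\alpha$ is a regular cardinal of $\HOD^{V[G]}$. I would let $D$ be the club of those $\kappa_\alpha$ that are simultaneously regular in $V[G^\phi]$ and singular in $V[G]$; there is no tension here, since $V[G^\phi]\s V[G]$ and the extra supercompact Radin information singularizes $\kappa_\alpha$ in $V[G]$ without doing so in the plain reduct.

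To see that $\delta$ is the \emph{first} supercompact I would reproduce the Specker--Jensen argument from the removed proof: the $\mathrm{GCH}$ in $V$ yields a special $(\kappa_\alpha^{++})^V$-Aronszajn tree which, using the Knaster/distributive factoring of $\mathbb{R}^{\sup}_u$ from \cite[Lemma~3.6]{CumFriGol}, remains a special $(\kappa_\alpha^{+})^{V[G]}$-Aronszajn tree, so $\square^*_{\kappa_\alpha}$ holds in $V[G]$ at every $V[G]$-singular $\kappa_\alpha\in D$. As $\square^*$ fails cofinally below any supercompact, no $\lambda<\delta$ can be supercompact in $V[G]$, giving clause~(2). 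Finally the $\HOD$ hypothesis (clause~(1)) holds because $V[G]$ is a set-forcing extension of $V=\mathrm{gHOD}=\HOD^{V}$, and the $\HOD$ hypothesis is preserved under set forcing.

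The hardest part will be the fine analysis of supercompact Radin forcing needed for clause~(3)(b): one must choose $u$ so that, simultaneously, (i) the supercompactness of $\delta$ is preserved, (ii) club-many induced measure sequences have length ${\geq}2$ so that those $\kappa_\alpha$ stay inaccessible in $V[G^\phi]$, and (iii) those very points are singularized in the full extension $V[G]$ with their $V$-successors collapsed. Verifying that these three requirements are jointly satisfiable---essentially a bookkeeping of the induced measure sequences along the Radin club together with the factorization of $\mathbb{R}^{\sup}_u$---is the technical crux; everything else reduces to the cited lemmas of \cite{CumFriGol} together with the homogeneity sandwich and the inner-model-regularity observation above.
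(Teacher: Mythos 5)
The fatal step is your mechanism for clause~(3)(b): the claim that if the induced measure sequence $w$ at a club point $\kappa_\alpha$ has length ${\geq}2$, then $\kappa_\alpha$ is inaccessible in the plain Radin extension $V[G^\phi]$. This is false, and in fact backwards. A point $\kappa_w$ of the Radin club whose sequence has length $2$ (i.e., a single measure) receives exactly a Prikry club below it in $V[G^\phi]$, so it becomes singular of cofinality $\omega$ there; more generally, every limit point of the Radin club whose sequence is short (certainly $2\leq \ell(w)<\kappa_w^+$) is singularized in $V[G^\phi]$. The points that \emph{do} stay regular under your parameter choice are the isolated points of the club ($\ell(w)=1$), but for those the supercompact part of the forcing never acts below $\kappa_w$, so $\kappa_w^{+V}$ is not collapsed and clause~(3)(a) fails. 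So with your setup the two requirements (a) and (b) pull in opposite directions and are never satisfied simultaneously on a club. Your choice of $\ell(u)<\delta^{++}$ makes this unfixable: since $u(\alpha)$ concentrates on sequences $w$ with $\ell(w)$ of cardinality at most $|\alpha|$ reflected (so $\ell(w)<\kappa_w^{++}$ whenever $|\alpha|\leq\delta^+$), generically \emph{every} point of the club carries a sequence of length below $\kappa_w^{++}$, and the route to regularity in $V[G^\phi]$ is closed off.

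What the paper actually does is choose $u$ with $\delta^{++}<\ell(u)<\delta^{+3}$, precisely so that (by a density/closure argument) the set $D=\{\kappa_w: w\in\mathcal{S}_G,\ \ell(w)\geq\kappa_w^{++}\}$ is a club in $\delta$. For such $w$, the GCH below $\kappa_w$ forces the projected (plain Radin) sequence $\langle\kappa_w\rangle^\smallfrown\langle\phi(w(\alpha)):1\leq\alpha<\ell(w)\rangle$ to have a \emph{repeat point} (a strictly decreasing chain of filters on $V_{\kappa_w}$ has length at most $\kappa_w^+{+}1$ under GCH), and then Mitchell's theorem (Lemma~\ref{lemma: preserving supercompactness}) gives that $\kappa_w$ remains measurable---in particular regular---in the extension by $\mathbb{R}^{\mathrm{proj}}_{\phi(w)}$; the factoring of the projected forcing (the tail being sufficiently distributive) then yields regularity in $V[G^\phi]$, hence in $\HOD^{V[G]}\s V[G^\phi]$, while the supercompact part still collapses $\kappa_w^{+V}$. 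Your surrounding architecture is the paper's (preparation to $V=\mathrm{gHOD}$, the sandwich $V\s\HOD^{V[G]}\s V[G^\phi]$, downward absoluteness of regularity to inner models, and a square-based argument for clause~(2)---you use Specker--Jensen special Aronszajn trees where the paper quotes Magidor--Sinapova for $\square_{\kappa,\omega}$, which is an acceptable variant), but the ``bookkeeping'' you defer to is not bookkeeping: it is the repeat-point threshold $\ell(w)\geq\kappa_w^{++}$, and it forces the long measure sequence $\ell(u)>\delta^{++}$ that your proposal explicitly excludes.
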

In \cite{CumFriGol}, Cummings, Friedman and Golshani construct a model where the $\HOD$ hypothesis holds and a  supercompact $\delta$ carries a club of cardinals $\kappa$ such that $\kappa^{+\HOD}<\kappa^+$. Building upon this work we will be able to prove the above-mentioned theorem.  
The model under consideration is a generic extension via \emph{Supercompact Radin forcing}, $\mathbb{R}^{\sup}_u$. The definition of $\mathbb{R}^{\sup}_u$ is rather technical so we refrain to reproduce  it here. Instead, we refer the reader to \cite{CumFriGol} (indeed, we recommend our readers to have a copy of \cite{CumFriGol} close to them) for a complete account of the poset.

\smallskip

In \cite{CumFriGol} a pair $(u,A)$ is called \emph{good} if $u\in\mathcal{U}^{\sup}_\infty$, $A\s \mathcal{U}^{\sup}_\infty\cap S(\kappa_u,\lambda_u)$ and $A\in \mathcal{F}(u)$. Following \cite[\S4]{CumFriGol}, given a good pair $(u,A)$ we denote 
\begin{itemize}
    \item $\phi(u):=\langle \kappa_u\rangle^\smallfrown \langle u(\alpha): 1\leq \alpha<\ell(u)\rangle$,
    \item $\phi(A):=\{\phi(v): v\in A\}.$
\end{itemize}
(Note that formally speaking $\phi$ is possibly a class function.)
Following \cite[\S4]{CumFriGol}, denote $\mathcal{U}^{\mathrm{proj}}_\infty=\{\phi(u): u\in \mathcal{U}^{\sup}_\infty\}$. Similarly, write $$\mathcal{U}^{\mathrm{proj},-}_\infty:=\{\phi(u)\restriction\ell_u: u\in \mathcal{U}^{\sup}_\infty\}.$$
In \cite[\S4]{CumFriGol} the authors call a pair $(w,B)$ \emph{good for projected forcing} if $$\text{$w\in \mathcal{U}^{\mathrm{proj}}_\infty$, $B\s \mathcal{U}^{\mathrm{proj}}_\infty\cap V_{\kappa_w}$ and $\textstyle B\in \bigcap_{0<\alpha<\ell(w)} \phi(w(\alpha))$.}$$

Let $\mathbb{R}^{\text{proj}}_{\phi(u)}$ 
be defined as the usual Supercompact Rading forcing yet employing good pairs for  projected forcing \cite[Definition~4.4]{CumFriGol}. 
Intuitively speaking, $\mathbb{R}^{\text{proj}}_{\phi(u)}$ is like regular Radin forcing except for a difference---the measures $w(\alpha)$ on the measure sequences $w$ appearing in a condition $p\in \mathbb{R}^{\text{proj}}_{\phi(u)}$ are on $S(\kappa_w,\lambda_w)$ and not on $V_{\kappa_w}$. As it turns out that, if $\langle \langle \sigma_\alpha\rangle^\smallfrown w_\alpha\mid \alpha<\theta\rangle$ is the generic object introduced after forcing with  $\mathbb{R}^{\sup}_u$ then $\langle w_\alpha\mid \alpha<\theta\rangle$ is  the generic object introduced after forcing with the projected forcing  $\mathbb{R}^{\text{proj}}_{\phi(u)}$. 
In \cite[\S4]{CumFriGol}, the authors show that $\phi$ induces a (weak) projection between the supercompact Radin forcing $\mathbb{R}^{\sup}_u$ and its projected forcing $\mathbb{R}^{\text{proj}}_{\phi(u)}$. Thus, every $V$-generic $G$ for $\mathbb{R}^{\sup}_u$ induces a generic filter $G^\phi$ for $\mathbb{R}^{\text{proj}}_{\phi(u)}$. Also, they show that  $\mathbb{R}^{\text{proj}}_{\phi(u)}$ preserves cardinals. 

\smallskip

The following is a well-known result (see \cite[Theorem~5.15]{Gitik-handbook}): 
\begin{lemma}\label{lemma: preserving supercompactness}
    Let $w\in \mathcal{U}^{\sup}_\infty$ be such that $\langle \kappa_w\rangle^\smallfrown \langle \phi(w(\alpha)): 1\leq \alpha<\ell(w)\rangle$ has a repeat point.\footnote{Recall that a \emph{repeat point} point for $\langle \kappa_w\rangle^\smallfrown \langle \phi(w(\alpha)): 1\leq \alpha<\ell(w)\rangle$ is an ordinal $\gamma<\ell(w)$ for which $\bigcap_{1\leq \beta<\gamma}\phi(u(\beta))=\bigcap_{1\leq \beta<\ell(u)}\phi(u(\beta))$.} Then, $\mathbb{R}^{\mathrm{proj}}_{\phi(w)}$ forces $``\kappa_w$ is measurable".\qed
\end{lemma}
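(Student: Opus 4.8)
The plan is to recognize the statement as an instance of Radin's classical preservation theorem and to prove it by a standard lifting argument. By construction $\mathbb{R}^{\mathrm{proj}}_{\phi(w)}$ is the Radin forcing determined by the projected measure sequence $u:=\langle\kappa_w\rangle^\smallfrown\langle\phi(w(\alpha)):1\le\alpha<\ell(w)\rangle$ on $\kappa:=\kappa_w$, whose measures $\phi(w(\alpha))$ concentrate on $V_{\kappa_w}$; and $\gamma<\ell(w)$ is a repeat point, so that $\bigcap_{1\le\beta<\gamma}\phi(w(\beta))=\bigcap_{1\le\beta<\ell(w)}\phi(w(\beta))$. First I would fix an elementary embedding $j\colon V\to M$ from which the initial segment $u\restriction\gamma$ is derived, so that $\crit(j)=\kappa$, $u\restriction\gamma\in M$, and $\phi(w(\alpha))=\{X\in V_{\kappa+1}:u\restriction\alpha\in j(X)\}$ for $1\le\alpha<\gamma$. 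Let $G$ be $V$-generic for $\mathbb{R}^{\mathrm{proj}}_{\phi(w)}=\mathbb{R}_u$; the goal is to lift $j$ to a generic elementary embedding with domain $V[G]$ and critical point $\kappa$.

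Next I would analyze the image forcing $j(\mathbb{R}_u)=\mathbb{R}_{j(u)}$ in $M$. It adds a Radin club through $j(\kappa)$ and factors as a three-part iteration: the part with support below $\kappa$, which is literally $\mathbb{R}_u$ and is hit by $G$; a single step inserting $\kappa$ into the generic club with measure sequence $u\restriction\gamma$; and a tail forcing $\mathbb{R}^{\mathrm{tail}}$ whose nontrivial coordinates all lie in the interval $(\kappa,j(\kappa))$. This is the only place the repeat-point hypothesis is used: the repeat-point equation is exactly what is needed so that the derived sequence $u\restriction\gamma$ satisfies $u\restriction\gamma\in j(B)$ for every top measure set $B$ of a condition in $G$ (equivalently, $B\in\bigcap_{1\le\beta<\gamma}\phi(w(\beta))=\bigcap_{1\le\beta<\ell(w)}\phi(w(\beta))$, the Radin filter used to define $\mathbb{R}_u$); consequently inserting $\kappa$ into the generic club with sequence $u\restriction\gamma$ is legitimate and renders $j``G$ compatible with the $M$-generic object being constructed.

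It then remains to produce, inside $V[G]$, an $M[G]$-generic filter $H$ for $\mathbb{R}^{\mathrm{tail}}$. Here I would use that $\mathbb{R}^{\mathrm{tail}}$ is sufficiently directed-closed from the point of view of $M[G]$ (its coordinates lie strictly above $\kappa$) together with the closure of $M$ under $\kappa$-sequences; combined with the $\mathrm{GCH}$, which bounds the number of dense subsets of $\mathbb{R}^{\mathrm{tail}}$ lying in $M[G]$, a diagonalization carried out in $V[G]$—thinning to cofinal subsequences of length $\le\kappa$ at limit stages—produces such an $H$. Setting $G':=G\ast\{u\restriction\gamma\}\ast H$ gives an $M$-generic filter for $j(\mathbb{R}_u)$ with $j``G\subseteq G'$, so $j$ lifts to $j^{+}\colon V[G]\to M[G']$ with $\crit(j^{+})=\kappa$. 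Finally I would set
$$W:=\{X\in\mathcal{P}(\kappa)^{V[G]}:\kappa\in j^{+}(X)\};$$
since $G$, the inserted sequence $u\restriction\gamma$ and the tail generic $H$ all lie in $V[G]$, the map $j^{+}\restriction\mathcal{P}(\kappa)^{V[G]}$ is computable in $V[G]$, whence $W\in V[G]$, and $W$ is a normal $\kappa$-complete ultrafilter witnessing that $\kappa_w$ is measurable in $V[G]$.

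The hard part will be the bookkeeping behind the factorization of $j(\mathbb{R}_u)$ and the construction of the tail generic $H\in V[G]$, namely pinning down the exact closure of $\mathbb{R}^{\mathrm{tail}}$ and verifying that $j``G$ is absorbed into $G'$; the repeat-point hypothesis itself enters only to license the insertion of $\kappa$ with sequence $u\restriction\gamma$. Since all of the relevant factorization and closure facts about Radin forcing are developed in \cite{Gitik-handbook}, I would cite that treatment rather than reproduce the calculations, which is why the lemma is recorded here without a detailed proof.
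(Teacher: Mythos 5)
There is a genuine gap, and it sits at the heart of your argument. Your plan is to lift $j$ by producing, inside $V[G]$, an $M[G]$-generic filter $H$ for the tail $\mathbb{R}^{\mathrm{tail}}$ of $j(\mathbb{R}_u)$, justified by the claim that $\mathbb{R}^{\mathrm{tail}}$ is "sufficiently directed-closed." But a Radin-type tail forcing is not directed closed — it is not even countably closed: its conditions are finite sequences of pairs, so a descending $\omega$-chain whose stems grow at each step has no lower bound. Only the direct-extension order $\leq^*$ enjoys closure, and meeting dense sets genuinely requires stem extensions, i.e.\ committing to actual points of the Radin club on $(\kappa, j(\kappa))$. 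Hence the GCH-counting-plus-diagonalization construction of $H$ cannot be carried out, and no amount of bookkeeping repairs it; for Prikry-type forcings, generics over ultrapowers are obtained by entirely different means (e.g.\ Mathias-style criteria via iterated ultrapowers), not by closure.

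Relatedly, you have misidentified where the repeat point is used, which is a good sanity check that the architecture is wrong. Inserting $(\kappa, u\restriction\gamma)$ below the top part of $j(p)$ is legitimate for \emph{every} $\gamma<\ell(u)$, repeat point or not, since $A\in\bigcap_{\beta<\ell(u)}\phi(w(\beta))$ already implies $u\restriction\gamma\in j(A)$; if the repeat point only "licensed the insertion," the same proof would show that Prikry forcing (a measure sequence of length $1$) preserves measurability, which is false. The actual role of the repeat point, as in the classical Mitchell--Gitik argument (Gitik's handbook, Theorem~5.15, which is exactly what the paper cites in lieu of a proof), is to guarantee $\bigcap_{\beta<\gamma}\phi(w(\beta))=\bigcap_{\beta<\ell(w)}\phi(w(\beta))$, so that measure-one sets attached to the inserted coordinate $(u\restriction\gamma,B)$ can be intersected back into the top part of conditions of $\mathbb{R}_u$ itself. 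That transfer is what lets one define the witnessing ultrafilter \emph{directly} in $V[G]$ by
$$W:=\{X\subseteq\kappa: \exists p\in G\ \text{such that } j(p)\ \text{with } (\kappa,u\restriction\gamma)\ \text{inserted}\ \Vdash\ \check\kappa\in j(\dot X)\},$$
with well-definedness and the ultrafilter property verified via the Prikry property and density in $G$ — no lifting of $j$ and no tail generic are ever constructed. If you want to record this lemma without proof, the right move is simply the paper's: cite Gitik's Theorem~5.15, whose proof follows the scheme just described rather than the lifting scheme you propose.
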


After the above explanations we are in a position to prove Theorem~\ref{thm: extendingCumFriGol}. Suppose the following properties hold in a ground model $V$:
\begin{enumerate}
    \item GCH.
    \item $\delta$ is supercompact.
    \item $V=g\HOD$.
\end{enumerate}

    The above configuration can be obtained from GCH and the existence of a supercompact cardinal $\delta$: First, make $\delta$ indestructible under $\delta$-directed-closed forcings that preserve the GCH. (This forcing itself preserves the GCH by Lemma~\ref{GCHplussigma2correctness}) Then force with the $\diamondsuit^*$ Oracle partial forcing of \cite[\S3]{Broo2} above the supercompact $\delta$ to get a model where GCH and $V=g\HOD$ hold.

\smallskip

   Let $u\in \mathcal{U}^{\sup}_\infty$ be a $(\delta,\delta^+)$-supercompact measure sequence with length $\delta^{++}<\ell(u)<\delta^{+3}$ such that forcing with $\mathbb{R}^{\sup}_u$ preserves supercompactness of $\delta$ (see \cite[Remark~5.1]{CumFriGol}). Fix $G\s \mathbb{R}^{\sup}_u$ a $V$-generic filter and denote
$$\mathcal{S}_G:=\{w\in \mathcal{U}^{\sup}_\infty: \exists p\in G\; (\text{$w$ appears in $p$)}\}$$
and
$$C=\langle \kappa_\alpha\mid \alpha<\delta\rangle$$
the increasing enumeration of  $\{\kappa_w\mid \exists p\in G\,\text{($w$ appears in $p$ and $w\neq u$)}\}$.

\begin{remark}
   Since $\lambda_u=\delta^+$   by forcing below an appropriate condition we may assume without loss of generality that $\lambda_w=\kappa^+_w$ for all supercompact measure sequence $w$ appearing in a typical condition in $\mathbb{R}^{\sup}_u$.
\end{remark}

\smallskip

We claim that $V[G]$ is the model witnessing Theorem~\ref{thm: extendingCumFriGol}:
\begin{proof}[Proof of Theorem~\ref{thm: extendingCumFriGol}]
 Clause~(1) is obvious. To establish Clause~(2) (i.e., that $\delta$ is the first supercompact cardinal in $V[G]$) we use a theorem of Magidor and Sinapova (see Theorem~3.1 in \cite{MagSin}). The argument goes as follows:  Let $\kappa\in C$ be a cardinal with $V[G]$-cofinality $\omega$. Since $$\text{$\cf^{V[G]}(\kappa^{+V})=\omega$ and $(\kappa^{++V})=(\kappa^{+V[G]})$}$$
 (by the properties of $\mathbb{R}^{\sup}_u$) the assumptions of \cite[Theorem~3.1]{MagSin} are met. Thus $\square_{\kappa,\omega}$ holds in $V[G]$ for all such $\kappa$. Since this comment applies for unboundedly-many cardinals $\kappa<\delta$, there are no  strongly compact cardinals below $\delta.$ (This is a well-known fact about square-like sequences and strongly compact cardinals, c.f. \cite{CumForMag}.)

 \smallskip

 The basic idea behind (3) is that the supercompact Radin forcing $\mathbb{R}^{\sup}_u$ introduces many supercompact sequences $w$ whose associated measure sequence $\langle \kappa_w\rangle^\smallfrown \langle \phi(w(\alpha)): 1\leq \alpha<\ell(w)\rangle$ on $V_{\kappa_w}$ contains a repeat point. Employing Lemma~\ref{lemma: preserving supercompactness}, we will infer that forcing with $\mathbb{R}^{\text{proj}}_{\phi(w)}$ preserves 
 the regularity of $\kappa_w$. Finally, using the factoring lemma of the projected forcing, we will  conclude that the same applies to the generic extension by $\mathbb{R}^{\text{proj}}_{\phi(u)}$.
\begin{claim}
    The set $D:=\{\kappa_w: w\in \mathcal{S}_G\,\wedge\, \ell(w)\geq \kappa^{++}_w\}$ is a club in $\delta$.
\end{claim}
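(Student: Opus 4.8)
The plan is to realize $D$ as a closed unbounded subset of the full Radin club $C=\{\kappa_w: w\in\mathcal{S}_G\}$, which is club in $\delta$ by \cite{CumFriGol}. Everything reduces to one computation with the constructing embeddings. Writing $E:=\{v:\ell(v)\geq\kappa_v^{++}\}$ (a subset of the relevant space $S(\kappa,\lambda)$ of measure sequences), I would observe that if $j_w\colon V\to M_w$ is a constructing embedding for a measure sequence $w$, then $E\in w(\alpha)$ iff $w\restriction\alpha\in j_w(E)$ iff $\ell(w\restriction\alpha)=\alpha\geq\kappa_w^{++}$, using that $M_w$ computes $\kappa_w^{++}$ correctly. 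Hence $E\in w(\alpha)$ precisely when $\alpha\geq\kappa_w^{++}$, and dually the complement $\bar E:=\{v:\ell(v)<\kappa_v^{++}\}$ lies in $w(\alpha)$ for every $\alpha<\kappa_w^{++}$. In particular, since $\delta^{++}<\ell(u)$, we get $E\in u(\alpha)$ for every $\alpha\in[\delta^{++},\ell(u))$.

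For unboundedness I would argue by density. Fix a condition $p\in\mathbb{R}^{\sup}_u$ with top part $(u,A)$, so that $A\in\bigcap_{0<\alpha<\ell(u)}u(\alpha)$, and fix $\beta<\delta$. Since $E\in u(\delta^{++})$, $A\in u(\delta^{++})$, and $\{v:\kappa_v>\beta\}\in u(\delta^{++})$ (the last by normality and $\delta$-completeness of the measure), the set $A\cap E\cap\{v:\kappa_v>\beta\}$ is $u(\delta^{++})$-positive, hence nonempty. Choosing $v$ in it and inserting $v$ into $p$ (with any measure-one set for $v$) produces a stronger condition forcing $\kappa_v\in D$ with $\kappa_v>\beta$. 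By genericity $D$ is unbounded in $\delta$.

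Closure is the delicate point. Let $\kappa^*<\delta$ be a limit point of $D$. As $D\subseteq C$ and $C$ is closed, $\kappa^*\in C$; let $w^*\in\mathcal{S}_G$ be the measure sequence the generic assigns to $\kappa^*$, so $\ell(w^*)\geq 1$ and $C\cap\kappa^*$ is exactly the generic club added by the sub-forcing $\mathbb{R}^{\sup}_{w^*}$. I must show $\ell(w^*)\geq(\kappa^*)^{++}$. Suppose not, so $\ell(w^*)<(\kappa^*)^{++}$. Then every $\alpha<\ell(w^*)$ satisfies $\alpha<(\kappa^*)^{++}$, so $\bar E\in w^*(\alpha)$ for all such $\alpha$; that is, $\bar E$ is measure one for every measure on $w^*$. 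Moreover $\bar E$ is \emph{hereditarily coherent}: by the same computation, whenever $v\in\bar E$ one has $\bar E\cap V_{\kappa_v}\in\bigcap_{0<\beta<\ell(v)}v(\beta)$, since every $\beta<\ell(v)<\kappa_v^{++}$ satisfies $\beta<\kappa_v^{++}$. The standard coherence/genericity analysis of Radin forcing then gives a bound $\gamma_0<\kappa^*$ such that every generic point $\nu$ of $\mathbb{R}^{\sup}_{w^*}$ with $\nu>\gamma_0$ has its assigned sequence $w_\nu\in\bar E$ (one meets the dense set of conditions all of whose measure-one sets, at every level of the finite stem, are refined into $\bar E$). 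Hence $D\cap\kappa^*=\{\nu\in C\cap\kappa^*: w_\nu\in E\}$ is bounded by $\gamma_0$, contradicting that $\kappa^*$ is a limit of $D$. Therefore $\ell(w^*)\geq(\kappa^*)^{++}$ and $\kappa^*\in D$, so $D$ is closed.

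The main obstacle is precisely this closure step, whose combinatorial heart is the hereditary-coherence lemma for Radin generics: one must check that refining every measure-one set appearing in a condition into $\bar E$ yields a dense set of conditions, which requires controlling the nested, tree-like structure of supercompact Radin conditions rather than only their top measure. I would extract this from the genericity lemmas of \cite{CumFriGol} (cf. the analysis in \cite{Gitik-handbook}); the self-coherence of $\bar E$ established in the first paragraph is exactly what makes those lemmas applicable level-by-level, and it is what prevents $E$-points from accumulating below a short sequence $w^*$.
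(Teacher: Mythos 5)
Your proof rests on the same key computation as the paper's: via constructing embeddings, $\bar E=\{v:\ell(v)<\kappa_v^{++}\}$ belongs to $w(\beta)$ for every $\beta<\min(\ell(w),\kappa_w^{++})$, so measure-one sets attached to ``short'' sequences can be refined into $\bar E$; and your unboundedness argument is the same density argument the paper gives, using $\ell(u)>\delta^{++}$. Where you diverge is the organization of the closure step. The paper argues with conditions and names: given $p$ forcing $\alpha\notin\dot D$, it splits into two cases according to whether some $w$ with $\kappa_w=\alpha$ appears in $p$, and in either case produces a direct extension $p^*$ (replacing $B$ by $B\cap\pi_w[X]\cap\{v:\ell(v)<\kappa_v^{++}\}$, resp.\ by $B\cap\pi_w[X]\cap\{v:\kappa_v>\alpha\}$) forcing $\sup(\dot D\cap\alpha)<\alpha$. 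You instead work in $V[G]$: since $D\subseteq C$ and $C$ is closed, any limit point $\kappa^*$ of $D$ already lies in $C$, which eliminates the paper's second case entirely, and you then run a density argument in the factor forcing $\mathbb{R}^{\sup}_{w^*}$ below the assigned sequence $w^*$. This is a legitimate and slightly more economical route, at the price of invoking the factorization lemma and the closedness of the full Radin club from \cite{CumFriGol} as black boxes (facts the paper also uses, but elsewhere).

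One detail needs repair. The dense set you describe --- conditions ``all of whose measure-one sets, at every level of the finite stem, are refined into $\bar E$'' --- is not dense: if a stem element $v_i$ lies in $E$, then $\bar E\notin\mathcal{F}(v_i)$ (indeed $E\in v_i(\beta)$ for all $\beta\geq\kappa_{v_i}^{++}$), so its measure-one set cannot be shrunk into $\bar E$. Fortunately your argument does not need this: it suffices to refine only the top measure-one set, the one attached to $w^*$, into $\bar E$ (together with the $\pi_{w^*}[X]$ adjustment required by the extension relation of \cite{CumFriGol}), which is possible exactly because $\ell(w^*)<(\kappa^*)^{++}$. The hereditary constraint on insertions --- your coherence observation combined with the requirement that inserted pairs draw their measure-one sets from the ambient one --- then guarantees that every club point above the stem's supremum $\gamma_0$ is a $\bar E$-point, while the finitely many stem points, including any $E$-points among them, are absorbed into the bound $\gamma_0$. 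With that correction your proof goes through and matches the paper's in substance.
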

\begin{proof}
Unboundedness follows by density. The key observation is that for each $\alpha<\delta$,
$\mathcal{D}_\alpha:=\{p\in \mathbb{R}^{\sup}_u: \text{$p$ mentions $w$ with $\ell(w)\geq \kappa_w^{++}$ and $\kappa_w> \alpha$}\}$. (Recall that our supercompact measure sequence $u$ has $\ell(u)> \delta^{++}$.)

For closure we argue as usual. Let $\dot{D}$ be a name for the above-displayed set and suppose that $\alpha<\delta$ and $p\in \mathbb{R}^{\sup}_u$ forces $``\alpha\notin \dot{D}"$. 
By definition, there is no $w$ appearing in $p$ such that both $\ell(w)\geq \kappa_w^{++}$ and $\kappa_w=\alpha$.

\smallskip

\underline{Case 1:} Suppose that there is $w$ appearing in $p$ with $\kappa_w=\alpha$. Say,
$$p=\langle (u_0,A_0),\dots, (w,B), \dots, (u,A)\rangle.$$

Then it must be that $\ell(w)<\kappa_w^{++}$.  Let $X$ be the set of supercompact measure sequences $v\in \mathcal{U}^{\sup}_\infty$ for which there is $C$ such that $(v,C)$ is a good pair and the sequence
$$\langle (u_0,A_0),\dots,(v,C), (w,B), \dots, (u,A)\rangle$$
is an extension of $p$. It turns out that $\pi_{w}[X]\in \mathcal{F}(w)$ (\cite[Lemma~3.7]{CumFriGol}), where $\pi_w$ is the unique order isomorphism between $w(0)$ and $\lambda_w:=\otp(w(0))$. In particular, we can let $p^*\leq^* p$ be the condition resulting from replacing in $p$ the measure one set  $B$ by $$B^*:=B\cap \pi_{w}[X]\cap \{v\in S(\kappa_w,\kappa_w^+): \ell(v)<\kappa_v^{++}\}.$$ This makes sure that $p^*$ forces $``\sup(\dot{D}\cap \alpha)<\alpha"$, as needed.

\smallskip

\underline{Case 2:} Suppose that there is no $w$ appearing in $p$ with $\kappa_w=\alpha$. Let $w$ be the first measure sequence in $p$ with $\kappa_w>\alpha$. 
As in the previous argument, set $B^*:=B\cap \pi_{w}[X]\cap \{v\in S(\kappa_w,\kappa_w^+): \kappa_v>\alpha\}$ and let $p^*$ be defined as before. The resulting condition $p^*\leq^* p$ forces $``\sup(\dot{D}\cap \alpha)<\alpha".$
\end{proof}
Note that the $V$-successor cardinal of each $\theta\in D$ is collapsed in $V[G]$: Let $\theta\in D$ and $w\in \mathcal{S}_G$ be the unique sequence such that $\kappa_w=\theta$. Let $p\in G$ be a condition in the generic of the form $\langle (w,B),(u,A)\rangle$. By the factoring property of supercompact Radin forcing,
$$\mathbb{R}^{\sup}_u/p\simeq \mathbb{R}^{\sup}_w/\langle (w,B)\rangle\times \mathbb{R}^{\sup}_u/\langle (u,A^*)\rangle$$
where $A^*:=\{v\in A: w\prec v\}$. By standard properties of the Supercompact Radin forcing, the first of these factors forces $``\theta$ is a cardinal", $``|\theta^{+V}|=\theta"$ and $``\theta^{++V}$ remains a cardinal". Note that the second factor is distributive enough so as to not alter these properties. By the analysis of \cite[\S4]{CumFriGol},
$$V\s \HOD^{V[G]}\s V[G^\phi],$$
and $V[G^\phi]$ is a cardinal-preserving generic extension of $V$.

Therefore, $$V[G]\models ``\theta^{+\HOD^{V[G]}}=\theta^{+V}<\theta^{+}".$$ 

The above argument disposes with the first bullet of (3) of the theorem. As for the $\theta$ being regular in $\HOD$ we argue as follows:  Since the GCH holds below $\theta$ and $\ell(w)\geq \theta^{++}$ the measure sequence $$\langle\theta\rangle^\smallfrown\langle \phi(w(\alpha)): 1\leq \alpha<\ell(w)\rangle$$ on $V_{\theta}$ has a repeat point. Thus, by Lemma~\ref{lemma: preserving supercompactness} above, $\theta$ remains measurable in the generic extension by $\mathbb{R}^{\text{proj}}_w$. Forcing with the projected forcing below a condition of the form $\langle (\phi(w),B),(\phi(u),A)\rangle$ we know that
$$\mathbb{R}^{\text{proj}}_{\phi(u)}/\langle (\phi(w),B),(\phi(u),A)\rangle\simeq \mathbb{R}^{\text{proj}}_{\phi(w)}/\langle (\phi(w),B)\rangle\times \mathbb{R}^{\text{proj}}_{\phi(u)}/\langle (\phi(u),A^*)\rangle$$
(see \cite[Lemma~3.6]{CumFriGol}).
Note that the second factor is distributive enough to preserve regularity of $\theta$. Therefore we deduce that $\theta$ is regular. In particular, $\theta$ is regular in $V[G^\phi]$, hence in $\HOD^{V[G]}$ as well.
\end{proof}

\begin{question}\label{que: omegastronglymeasurable}
    Assume the $\HOD$ hypothesis holds and that $\delta$ is the first supercompact cardinal. Can $\delta$ be $\omega$-strongly measurable in $\HOD$?
\end{question}
By \cite[Theorem~2.11]{Gol} if $\delta$ is regular and $N$ is an inner model such that $\mathrm{Reg}^N\cap \delta$ contains an $\omega$-club then it actually contains a club. Therefore, a positive answer to Question~\ref{que: omegastronglymeasurable} yields a stronger configuration than the one arranged in Theorem~\ref{thm: extendingCumFriGol}.  In contrast, by \cite[Propositive~2.3]{Gol}, if the $\HOD$ hypothesis holds and $\delta$ is strongly compact no regular cardinal strictly above $\delta$ is $\omega$-strongly measurable in $\HOD$.

\smallskip

A related open question is whether Theorem~\ref{thm: extendingCumFriGol} can be established below the first $\HOD$-supercompact cardinal $\delta$. This is particularly interesting because, under the $\HOD$ hypothesis and the existence of a $\HOD$-supercompact, $\HOD$ exhibits a behavior similar to that under the existence of an extendible cardinal (e.g., $\HOD$ satisfies $\delta$-covering and $\delta$-approximation). Nonetheless, it is known that a configuration like that of Theorem~\ref{thm: extendingCumFriGol} is impossible below the first extendible cardinal. Therefore, it is reasonable to ask the following:

\begin{question}\label{que: HOD supercompactness and covering}
 Is the configuration described in Theorem~\ref{thm: extendingCumFriGol} consistent with $\delta$ being $\HOD$-supercompact?
\end{question}
Also, it is not difficult to produce a model in which $\delta$ is $C^{(n)}$-supercompact, the $\HOD$ hypothesis holds, and there are unboundedly many $\kappa < \delta$ such that $\kappa^{+\HOD} < \kappa^+$. The idea is to force with a Magidor product of supercompact Prikry forcings on a sufficiently sparse set of cardinals $\kappa < \delta$ that are $\kappa^+$-supercompact. The resulting forcing is cone-homogeneous and preserves $C^{(n)}$-supercompacts by the arguments in \cite{HMP}. The situation becomes less clear if one attempts to arrange this configuration on a club of cardinals $\kappa$. In other words, it remains open  whether the Cummings--Friedman--Golshani configuration \cite{CumFriGol} can be extended to the case where $\delta$ is $C^{(n)}$-supercompact. In fact, more can be said: in the model of \cite{CumFriGol}, $\delta$ is not $C^{(1)}$-supercompact cardinal per the argument in \cite[Theorem~5.2]{PovAxiomA}.  

\begin{question}\label{que: C(1)supercompacts and covering}
    Is it consistent for the first $C^{(1)}$-supercompact $\delta$ to carry a club $C\s \delta$ where $\kappa^{+\HOD}<\kappa^+$? 
\end{question}

\subsection{Where $\omega$-strong measurability takes hold}\label{sec: where omega strongly}

This section pertains to Goldberg's result
\cite{Gol} that if the HOD hypothesis fails and there is a strongly compact cardinal, then all sufficiently large regular cardinals are \(\omega\)-strongly measurable in \(\HOD\).

\smallskip

For the purposes of this subsection, let us assume that \(\kappa\) is the least strongly compact cardinal and the HOD hypothesis fails. Let \(\eta_0\) be the least ordinal greater than or equal to \(\kappa\) such that all regular cardinals greater than or equal to \(\eta_0\) are \(\omega\)-strongly measurable in \(\HOD\). What can be said about the size of \(\eta_0\)? Recall that Woodin's HOD dichotomy shows that if \(\delta\) is extendible, then all regular cardinals greater than or equal to \(\delta\) are \(\omega\)-strongly measurable in \(\HOD\) \cite{midrasha}. Therefore \(\eta_0\leq \delta\). In fact, definability considerations imply that \(\eta_0 < \delta\). This is because \(\eta_0\) is \(\Sigma_3\)-definable from the parameter \(\kappa\), and so in fact \(\eta_0\) is strictly less than the least \(\Sigma_3\)-reflecting cardinal above \(\kappa\), which is well below the first extendible cardinal \(\delta\).

The analysis of \cite{Gol}
shows that \(\HOD\) is quite close to \(V\)
between \(\kappa\) and \(\eta_0\). For example, no regular cardinal between \(\kappa\) and \(\eta_0\) is \(\omega\)-strongly measurable in \(\HOD\). 
Moreover, \(\HOD\) has the \(\kappa\)-cover property below \(\eta_0\) and correctly computes all successors of singular cardinals between \(\kappa\) and \(\eta_0\).

Our next proposition shows that our assumptions provide essentially no insight into the size of \(\eta_0\) relative to \(\kappa\). In particular, this shows that the strongly compact (or supercompact) HOD dichotomy theorem from \cite{Gol}
cannot be significantly improved.
\begin{theorem}\label{thm: omegastrongly}
    Suppose \(\delta\) is supercompact
    and the \(\HOD\) hypothesis fails. Then  for any cardinal \(\lambda\), there is a forcing extension \(V[G]\) in which \(\delta\) is indestructibly supercompact, the \(\HOD\) hypothesis fails, and $\mathcal{P}^{V[G]}(\lambda)\s \HOD^{V[G]}$. In particular, \(\eta_0^{V[G]} \geq \lambda\).
    \begin{proof}
        First force using the Laver preparation to produce a generic extension \(V[g]\) in which the supercompactness of \(\delta\) is indestructible under \(\delta\)-directed closed forcings. Then over \(V[g]\), use a \(\lambda^+\)-directed-closed forcing to produce an extension \(V[G] = V[g][h]\) such that every subset of $\lambda$ in $V[g]$ is coded into the continuum function of \(V[G]\) above \(\lambda\) (recall the coding poset from p.\pageref{McAloon}). Then \( \mathcal{P}^{V[G]}(\lambda) = \mathcal{P}^{V[g]}(\lambda) \subseteq \HOD^{V[G]}\), and in particular 
        \(\eta_0^{V[G]} \geq \alpha\).
    \end{proof}
\end{theorem}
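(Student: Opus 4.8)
The plan is to reach the desired model in two stages, both of which are highly closed forcings, so that the indestructible supercompactness of $\delta$ is cheaply preserved while the second stage forces every subset of $\lambda$ into $\HOD$ by coding it into the continuum function high above $\delta$. First I would force with the Laver preparation to obtain $V[g]$ in which the supercompactness of $\delta$ is indestructible under $\delta$-directed-closed forcing. Since the failure of the $\HOD$ hypothesis is preserved under set-sized forcing, the $\HOD$ hypothesis continues to fail in $V[g]$. Over $V[g]$ I would then force with a $\delta$-directed-closed poset $h$ designed to put $\mathcal{P}^{V[g]}(\lambda)$ into $\HOD$; setting $V[G]=V[g][h]$ will yield the theorem.

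For the coding, let $\mu:=(2^\lambda)^{V[g]}$ and fix in $V[g]$ an enumeration $\langle X_\xi : \xi<\mu\rangle$ of $\mathcal{P}^{V[g]}(\lambda)$. Using the canonical, parameter-free Gödel pairing $\Gamma\colon \mathrm{Ord}\times\mathrm{Ord}\to\mathrm{Ord}$, form the single set
$$A:=\{\Gamma(\xi,\beta): \xi<\mu,\ \beta\in X_\xi\}\subseteq \mu.$$
Put $\chi:=\max(\mu,\delta)$ and let $h$ be the two-step iteration that first forces the $\mathrm{GCH}$ on a tail above $\chi$ (a standard $\chi^+$-directed-closed forcing, harmless by indestructibility) and then forces with the McAloon poset $\mathrm{Code}(A,\chi)$ from Definition~\ref{def: codingposet}. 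As $\chi\geq\mu$, the interval on which $A$ is coded sits above $\chi$, where the $\mathrm{GCH}$ now holds, so in $V[G]$ the set $A$ is recovered from the continuum function with $\chi$ and $\mu$ as parameters; hence $A\in\HOD^{V[G]}$. The reason for using $\Gamma$ is precisely that each $X_\xi=\{\beta:\Gamma(\xi,\beta)\in A\}$ is then ordinal-definable from $A$ and $\xi$, so $X_\xi\in\HOD^{V[G]}$ for every $\xi<\mu$, giving $\mathcal{P}^{V[g]}(\lambda)\subseteq \HOD^{V[G]}$.

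Next I would verify preservation. Since $h$ is $\chi^+$-directed-closed with $\chi\geq\lambda$, it adds no new subsets of $\lambda$, so $\mathcal{P}^{V[G]}(\lambda)=\mathcal{P}^{V[g]}(\lambda)\subseteq\HOD^{V[G]}$. Because $h$ is $\delta$-directed-closed, indestructibility in $V[g]$ makes $\delta$ supercompact in $V[G]$; moreover indestructibility itself survives, since for any $\delta$-directed-closed $\mathbb{Q}\in V[G]$ the iteration $h*\dot{\mathbb{Q}}$ is $\delta$-directed-closed over $V[g]$. As $V[G]$ is a set-forcing extension of $V$, the failure of the $\HOD$ hypothesis persists. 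Finally, to read off $\eta_0^{V[G]}\geq\lambda$, fix any regular $\theta$ with $\delta\leq\theta\leq\lambda$ (the case $\lambda<\delta$ being immediate). Then $\mathcal{P}^{V[G]}(\theta)\subseteq\mathcal{P}^{V[G]}(\lambda)\subseteq\HOD^{V[G]}$, so every partition of $S^\theta_\omega$ present in $V[G]$ already lies in $\HOD^{V[G]}$. By Solovay's splitting theorem (\cite{Kan}) $S^\theta_\omega$ decomposes in $V[G]$ into $\theta$-many disjoint stationary sets, and hence into $\eta$-many stationary pieces for every $\eta<\theta$; these partitions lie in $\HOD^{V[G]}$. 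Since any witness $\eta$ to $\omega$-strong measurability has $(2^\eta)^{\HOD}<\theta$, hence $\eta<\theta$, no such $\theta$ is $\omega$-strongly measurable in $\HOD^{V[G]}$, and therefore $\eta_0^{V[G]}\geq\lambda$.

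The main obstacle is conceptual rather than computational: it is the step from coding a single set to coding the \emph{entire} power set $\mathcal{P}(\lambda)$ while keeping each of its members inside $\HOD$. This is exactly why the parameter-free Gödel pairing $\Gamma$ is indispensable, since an arbitrary bijection witnessing $|\mu\times\lambda|=\mu$ would let us retrieve the $X_\xi$ only through a parameter that need not be ordinal-definable, and the argument would collapse. The one genuinely external ingredient is the preservation of the failure of the $\HOD$ hypothesis under set forcing; everything else reduces to routine bookkeeping about the closure of the coding poset, the absorption of the $\mathrm{GCH}$-arrangement, and the persistence of Laver indestructibility.
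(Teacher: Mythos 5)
Your proof is correct and takes essentially the same route as the paper: the Laver preparation followed by a highly directed-closed McAloon-style coding of $\mathcal{P}^{V[g]}(\lambda)$ into the continuum function above the relevant cardinals, with the failure of the $\HOD$ hypothesis surviving because the whole extension is set-sized. Your G\"odel-pairing master set $A$ (which reduces coding all of $\mathcal{P}^{V[g]}(\lambda)$ to coding one set of ordinals) and the Solovay-splitting verification that no regular $\theta\in[\delta,\lambda]$ is $\omega$-strongly measurable in $\HOD^{V[G]}$ are just careful implementations of steps the paper leaves implicit.
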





\subsection*{Acknowledgments} Poveda wishes to thank Professors Cummings and Woodin for numerous stimulating discussions about Radin forcing and $\HOD$.  
\bibliographystyle{alpha} 
\bibliography{citations}
\end{document}